\def\namedlabel#1#2{\begingroup
   \def\@currentlabel{#2}%
   \label{#1}\endgroup
}
\theoremstyle{plain}
\newtheorem{thm}{Theorem}[section]
\newtheorem{cor}[thm]{Corollary}
\newtheorem{lem}[thm]{Lemma}
\newtheorem{prop}[thm]{Proposition}
\theoremstyle{definition}
\newtheorem{defn}[thm]{Definition}
\theoremstyle{remark}
\setlist[enumerate,1]{leftmargin=2em}
\def\A{\mathbf A}
\def\Aut{{\rm Aut}}
\def\BI{\mathfrak{BI}}
\def\C{\mathbb C}
\def\g{\mathfrak g}
\def\N{\mathbb N}
\def\T{\mathbf T}
\def\F{\mathbb F}
\def\Z{\mathbb Z}
\def\U{U(\mathfrak{sl}_2)}
\def\e{\varepsilon}
\newcommand{\floor}[1]{\left\lfloor #1 \right\rfloor}
\newcommand{\subsetdot}{\subset\mathrel{\mkern-7mu}\mathrel{\cdot}\,}
\title[A skew group ring, Leonard triples and odd graphs]{
A skew group ring of $\Z/2\Z$ over $U(\mathfrak{sl}_2)$, Leonard triples and odd graphs}
\author{Hau-Wen Huang}
\address{
Department of Mathematics\\
National Central University\\
Chung-Li 32001 Taiwan
}
\email{hauwenh@math.ncu.edu.tw}
\author{Chin-Yen Lee}
\address{
Department of Mathematics\\
National Central University\\
Chung-Li 32001 Taiwan
}
\email{109281001@cc.ncu.edu.tw}
\begin{document}

\begin{abstract} 
We employ a skew group ring of $\mathbb Z/2\mathbb Z$ over $U(\mathfrak{sl}_2)$ to construct modules over the universal Bannai--Ito algebra. In addition, we give the conditions under which the defining generators act as Leonard triples on the resulting modules. 
As a combinatorial realization, we establish an algebra homomorphism from the universal Bannai--Ito algebra onto the Terwilliger algebra of an odd graph. This homomorphism provides a unified description of Leonard triples on all irreducible modules over the Terwilliger algebra. 
\end{abstract}

\maketitle

{\footnotesize{\bf Keywords:}  Leonard triples, odd graphs, skew group rings, Terwilliger algebras}

{\footnotesize{\bf MSC2020:} 05E10, 05E30, 16S35, 33D45}

\allowdisplaybreaks

\section{Preliminaries on skew group rings and Hopf algebras}\label{s:skew}

Throughout this paper, we assume that all algebras are unital and associative, and that all algebra (anti-)homomorphisms are unital.  For any two elements $x,y$ of an algebra, we write $[x,y]=xy-yx$ and $\{x,y\}=xy+yx$.  Let $\N$ denote the set of nonnegative integers. 
By convention, a vacuous summation is equal to $0$ and a vacuous product is equal to $1$.

In analogy with the way semidirect products generalize the direct product of groups, the skew group rings may be regarded as a natural generalization of group rings. Let $R$ be a ring with unit $1$. Let $G$ be a finite group. Suppose that there is a group homomorphism $\varphi:G\to \Aut(R)$. The {\it skew group ring $R\rtimes_\varphi G$ of $G$ over $R$ induced by $\varphi$} consists of all formal sums
$$
\sum_{g\in G} a_g g
\qquad 
\hbox{where $a_g\in R$ and $g\in G$}.
$$
The addition operation is componentwise. The multiplication is distributively defined by 
\begin{gather}
\label{skew:product}
a_1 g_1\cdot a_2 g_2=\left(a_1\cdot\varphi(g_1)(a_2)\right) g_1g_2
\end{gather}
for all $a_1,a_2\in R$ and all $g_1,g_2\in G$. 
Each $a \in R$ can be identified with $a \cdot 1 \in R \rtimes_\varphi G$. Each $g \in G$ can be identified with $1 \cdot g \in R \rtimes_\varphi G$. 
Let $\F$ denote a field.
Suppose that $R$ is an algebra over $\F$. Then $R\rtimes_\varphi G$ is an algebra over $\F$ with scalar multiplication defined by
$$
c\left(\sum_{g\in G} a_g g\right)=\sum_{g\in G} (c a_g) g
$$
where $c\in \F$ and $a_g\in R$ for all $g\in G$.

Let $A$ denote an algebra over $\F$. The multiplication map $m:A\otimes A\to A$ is the $\F$-bilinear map defined by
$m(a\otimes b)=ab$ for all $a,b\in A$. 
The unit map $\iota : \F \to A$ is given by $\iota(c) = c \cdot 1$ for all $c \in \F$. 
Recall that $A$ is called a {\it Hopf algebra} if the following conditions hold:
\begin{enumerate}
\item[$\bullet$] There is an algebra homomorphism $\Delta:A\to A\otimes A$ such that 
$$
(1\otimes \Delta)\circ \Delta=(\Delta\otimes 1)\circ \Delta.
$$
The map $\Delta$ is called the {\it comultiplication} of $A$.

\item[$\bullet$] There is an algebra homomorphism $\e:A\to \F$ such that 
\begin{align*}
m\circ (1\otimes (\iota\circ\e))\circ \Delta=1,
\\
m\circ ((\iota\circ \e)\otimes 1)\circ \Delta=1.
\end{align*}
The map $\e$ is called the {\it counit} of $A$.

\item[$\bullet$] There is an algebra anti-homomorphism $S:A\to A$ such that 
\begin{align*}
m\circ (1\otimes S)\circ \Delta=\iota\circ\e,
\\
m\circ (S\otimes 1)\circ \Delta=\iota\circ\e.
\end{align*}
The map $S$ is called the {\it antipode} of $A$.
\end{enumerate}
In what follows, particular emphasis will be placed on two classical Hopf algebras and a Hopf algebra arising from their skew group ring.

Let $\g$ denote a finite-dimensional Lie algebra over the complex number field $\C$.
There exists a unique algebra $U(\g)$ over $\C$, up to isomorphism, together with a Lie algebra homomorphism $h:\g\to U(\g)$  
satisfying the following universal property: For any algebra $A$ over $\C$ and any Lie algebra homomorphism $\phi:\g\to A$, there exists a unique algebra homomorphism $\widetilde{\phi}:U(\g)\to A$ such that $\phi=\widetilde{\phi}\,\circ\, h$. The algebra $U(\g)$ is called the {\it universal enveloping algebra} of $\g$. 
By the Poincar\'{e}--Birkhoff--Witt theorem, the map $h$ is injective. Hence each element of $\g$ can be naturally identified with an element of $U(\g)$. The algebra $U(\g)$ is generated by all $u\in \g$. In addition, $U(\g)$ carries the following Hopf algebra structure:
\begin{enumerate}
\item[$\bullet$] The comultiplication $U(\g) \to U(\g) \otimes U(\g)$ is the algebra homomorphism that maps 
\begin{eqnarray*}
u &\mapsto &u\otimes 1+1\otimes u 
\qquad 
\hbox{for all $u\in \g$}.
\end{eqnarray*}

\item[$\bullet$] The counit $U(\g)\to \C$ is the algebra homomorphism that maps 
\begin{eqnarray*}
u &\mapsto & 0
\qquad 
\hbox{for all $u\in \g$}.
\end{eqnarray*}

\item[$\bullet$] The antipode $U(\g)\to U(\g)$ is the algebra anti-homomorphism that maps 
\begin{eqnarray*}
u &\mapsto &-u 
\qquad 
\hbox{for all $u\in \g$}.
\end{eqnarray*}
\end{enumerate}
Let $G$ be a group. The group ring $\C[G]$ of $G$ over $\C$ carries the following Hopf algebra structure:
\begin{enumerate}
\item[$\bullet$] The comultiplication $\C[G]\to \C[G]\otimes \C[G]$ is the algebra homomorphism
that maps 
\begin{eqnarray*}
g &\mapsto &g\otimes g
\qquad 
\hbox{for all $g\in G$}.
\end{eqnarray*}

\item[$\bullet$] The counit $\C[G]\to \C$ is the algebra homomorphism that maps 
\begin{eqnarray*}
g &\mapsto &1
\qquad 
\hbox{for all $g\in G$}.
\end{eqnarray*}

\item[$\bullet$] The antipode $\C[G]\to \C[G]$ is the algebra anti-automorphism 
that maps 
\begin{eqnarray*}
g &\mapsto &g^{-1}
\qquad 
\hbox{for all $g\in G$}.
\end{eqnarray*}
\end{enumerate}

We now assume that $G$ is a finite subgroup of ${\rm Aut}(\g)$. By the universal property of $U(\g)$,  
each $g\in G$ extends uniquely to an algebra automorphism of  $U(\g)$. Thus $G$ embeds naturally into ${\rm Aut}(U(\g))$. This gives rise to the skew group ring of $G$ over $U(\g)$, denoted by
$$
U(\g)_G:=U(\g)\rtimes G
$$ 
for brevity. 
In this case $U(\g)_G$ naturally inherits the Hopf algebra structure from those of $U(\g)$ and $\C[G]$. 
Specifically, the Hopf algebra structure of $U(\g)_G$ is as follows:
\begin{enumerate}
\item[$\bullet$] The comultiplication $\Delta:U(\g)_G\to U(\g)_G\otimes U(\g)_G$ is the algebra homomorphism defined by
\begin{align}
\Delta(u)&=u\otimes 1+1\otimes u
\quad \hbox{for all $u\in \g$},
\label{comul:1_UG}
\\
\Delta(g)&=g\otimes g
\qquad \hbox{for all $g\in G$}.
\label{comul:2_UG}
\end{align}

\item[$\bullet$] The counit $\e:U(\g)_G\to \C$ is the algebra homomorphism  defined by
\begin{align*}
\e(u)&=0
\qquad \hbox{for all $u\in \g$},
\\
\e(g)&=1
\qquad \hbox{for all $g\in G$}.
\end{align*}

\item[$\bullet$] The antipode $S:U(\g)_G\to U(\g)_G$ is the  algebra anti-automorphism defined by
\begin{align*}
S(u)&=-u
\qquad \hbox{for all $u\in \g$},
\\
S(g)&=g^{-1}
\qquad \hbox{for all $g\in G$}.
\end{align*}
\end{enumerate}
This section provides the necessary background on skew group rings and Hopf algebras.

\section{Introduction}\label{s:intro}

The Lie algebra $\mathfrak{sl}_2$ over $\C$ consists of all $2 \times 2$ complex trace-zero matrices. 
For a $Q$-polynomial distance-regular graph, the Terwilliger algebra with respect to a base vertex is the subalgebra of the full matrix algebra generated by the adjacency matrix and the corresponding dual adjacency matrix.

The interplay between $\mathfrak{sl}_2$ and Terwilliger algebras has been extensively investigated since the study of hypercubes \cite{hypercube2002}. More recently, the Clebsch--Gordan rule for $\U$ was realized through the decompositions of Terwilliger algebras of Hamming graphs \cite{Huang:CG&Hamming}. This line of work continued with a clarification of the subtle link between the Clebsch--Gordan coefficients of $\U$ and the Terwilliger algebras of Johnson graphs \cite{Huang:CG&Johnson}. A $q$-analogue of this link was then established for Grassmann graphs, relating the Clebsch--Gordan coefficients of $U_q(\mathfrak{sl}_2)$ to their Terwilliger algebras \cite{Huang:CG&Grassmann}. 
Related developments include the use of $U_q(\mathfrak{sl}_2)$ and $U_q(\widehat{\mathfrak{sl}}_2)$ to study the Terwilliger algebras of dual polar graphs and Grassmann graphs \cite{dualpolar:2022,Boyd:2013,Watanabe2020}.

In parallel, an algebra homomorphism from the universal Hahn algebra into $\U$ was established to capture the relation between the Terwilliger algebras of a hypercube and its halved graph \cite{halved:2023}. This line of work further led to an algebra homomorphism from the universal Racah algebra into $\U$, elucidating its connection with the Terwilliger algebras of halved cubes \cite{halved:2024}. 
Collectively, these results demonstrate a rich and fruitful set of links between $\mathfrak{sl}_2$ and Terwilliger algebras.

Building on these developments, this paper contributes to the study of the interplay between $\mathfrak{sl}_2$ and Terwilliger algebras by means of skew group rings. 
Recall that $\mathfrak{sl}_2$ admits the standard basis
\begin{gather}\label{EFH_matrices}
E=\begin{pmatrix}
0 &1
\\
0 &0
\end{pmatrix},
\quad 
F=\begin{pmatrix}
0 &0
\\
1 &0
\end{pmatrix},
\quad 
H=\begin{pmatrix}
1 &0
\\
0 &-1
\end{pmatrix}.
\end{gather}
There is an involutive Lie algebra automorphism $\rho:\mathfrak{sl}_2\to \mathfrak{sl}_2$ that sends 
\begin{eqnarray}
\label{rho}
E &\mapsto & F,
\qquad 
F\;\;\mapsto\;\; E,
\qquad 
H\;\;\mapsto\;\; -H.
\end{eqnarray}
The subgroup of ${\rm Aut}(\mathfrak{sl}_2)$ generated by $\rho$, which is isomorphic to $\Z/2\Z$, gives rise to the skew group ring 
$$
\U_{\Z/2\Z}.
$$
Let $d\geq 1$ denote an integer. 
Recall that the {\it odd graph} $O_{d+1}$ is the $Q$-polynomial distance-regular graph whose vertices are the $d$-element subsets of a $(2d+1)$-element set, with two vertices adjacent if and only if the corresponding subsets are disjoint. 
Extending the framework developed for hypercubes in  \cite{hypercube2002}, we establish a connection between the representations of 
$$
\U_{\Z/2\Z}^{\otimes 2}:=\U_{\Z/2\Z}\otimes \U_{\Z/2\Z}
$$ 
and the Terwilliger algebras of odd graphs, mediated by the universal Bannai--Ito algebra.

The Bannai--Ito polynomials were originally introduced in the monograph of Bannai and Ito on association schemes \cite{BannaiIto1984}. They were later characterized as the 
$q\to -1$ limit of the $q$-Racah polynomials \cite{Zhedanov:2012}. In parallel, a set of algebraic relations can be formulated, which are known as the Askey--Wilson relations associated with the Bannai--Ito polynomials \cite{Vidunas:2007,lp&awrelation}. The Bannai--Ito algebras are defined as algebras satisfying these relations. To provide a uniform framework for their structural study, the universal Bannai--Ito algebra was defined as follows: 
The {\it universal Bannai--Ito algebra} $\BI$ is an algebra over $\C$ generated by $X,Y,Z$ and the relations assert that each of 
\begin{align}
\label{BI:centers}
\{X,Y\}-Z,  
\qquad 
\{Y,Z\}-X, 
\qquad \{Z,X\}-Y  
\end{align}
is central in $\BI$ \cite{R&BI2015,Huang:R<BI}. In other words, the Bannai--Ito algebra is obtained by assigning scalar values to the defining central elements (\ref{BI:centers}) of $\BI$.

By (\ref{comul:1_UG}) and (\ref{comul:2_UG}) the comultiplication $\Delta:\U_{\Z/2\Z}\to \U_{\Z/2\Z}^{\otimes 2}$ of $\U_{\Z/2\Z}$ is an algebra homomorphism given by 
\begin{align}
\Delta(E)&=E\otimes 1+1\otimes E,
\label{DeltaE}
\\
\Delta(F)&=F\otimes 1+1\otimes F,
\label{DeltaF}
\\
\Delta(H)&=H\otimes 1+1\otimes H,
\label{DeltaH}
\\
\Delta(\rho)&=\rho\otimes \rho.
\label{Deltarho}
\end{align} 
For any $\U_{\Z/2\Z}^{\otimes 2}$-module $V$ and any scalar $\theta\in\C$, we define 
\begin{gather}
\label{V(theta)}
V(\theta)=\{v\in V\,|\,\Delta(H)v=\theta v\}.
\end{gather}
Among these, the case $\theta=1$ is of particular interest. 
In the first main result of this paper, we establish a $\BI$-module structure on $V(1)$.

\begin{thm}
\label{thm:V(1)}
Suppose that $V$ is a $\U_{\Z/2\Z}^{\otimes 2}$-module. 
Then $V(1)$ is a $\BI$-module given by
\begin{align}
    X&=\Delta(E\rho),
    \label{X:V(1)}\\
    Y&=\frac{1}{2}(H\otimes 1-1\otimes H),
    \label{Y:V(1)}\\
    Z&=(E\otimes 1-1\otimes E)\Delta(\rho).
    \label{Z:V(1)}
\end{align}
\end{thm}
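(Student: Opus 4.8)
The plan is to exhibit the assignment (\ref{X:V(1)})--(\ref{Z:V(1)}) as an algebra homomorphism from $\BI$ into the endomorphism algebra of $V(1)$, which by the definition of $\BI$ reduces to checking that the three elements in (\ref{BI:centers}) act centrally on $V(1)$. First I would record the basic relations in $\U_{\Z/2\Z}$: since $\rho$ realizes the automorphism (\ref{rho}), one has $\rho^2=1$, $\rho E=F\rho$, $\rho F=E\rho$, and $\rho H=-H\rho$, whence $(E\rho)^2=EF$. Applying $\Delta$ and using (\ref{DeltaE})--(\ref{Deltarho}) gives $X=E\rho\otimes\rho+\rho\otimes E\rho$ and $Z=E\rho\otimes\rho-\rho\otimes E\rho$; writing $P=E\rho\otimes\rho$ and $Q=\rho\otimes E\rho$, this reads $X=P+Q$ and $Z=P-Q$.

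Before verifying the relations I would check that $X,Y,Z$ preserve $V(1)$. The key observation is that $\Delta(H)$ and $\Delta(\rho)=\rho\otimes\rho$ anticommute (each of $H\otimes1$ and $1\otimes H$ anticommutes with $\rho\otimes\rho$), so $\Delta(\rho)$ sends $V(\theta)$ to $V(-\theta)$; in particular $\Delta(\rho)\colon V(1)\to V(-1)$. Moreover $[\Delta(H),\,E\otimes1\pm1\otimes E]=2(E\otimes1\pm1\otimes E)$, so both $\Delta(E)=E\otimes1+1\otimes E$ and $E\otimes1-1\otimes E$ raise the $\Delta(H)$-weight by $2$. Since $X=\Delta(E)\Delta(\rho)$ and $Z=(E\otimes1-1\otimes E)\Delta(\rho)$, each maps $V(1)\to V(-1)\to V(1)$; and $Y$ is built from $H\otimes1$ and $1\otimes H$, hence commutes with $\Delta(H)$ and preserves $V(1)$.

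The core is the three central relations. Computing anticommutators with $Y=\tfrac12(H\otimes1-1\otimes H)$ I expect to find $\{P,Y\}=P$ and $\{Q,Y\}=-Q$ (the $H\otimes1$ terms feed $P$, the $1\otimes H$ terms feed $Q$), so that $\{X,Y\}=P-Q=Z$ and $\{Y,Z\}=P+Q=X$ hold identically on all of $V$. Thus the first two elements of (\ref{BI:centers}) act as $0$, which is central. For the third, the cross terms cancel and $(E\rho)^2=EF$, $\rho^2=1$ give $\{Z,X\}=2P^2-2Q^2=2(EF\otimes1-1\otimes EF)$.

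It remains to show that $\{Z,X\}-Y$ acts centrally on $V(1)$, and this is the step I expect to be the main obstacle, since it is where the restriction to $V(1)$ is essential. Let $\Lambda=2EF+2FE+H^2$ be the Casimir element of $\U$; it is manifestly invariant under $E\leftrightarrow F$, $H\mapsto -H$, hence fixed by (\ref{rho}), so it commutes with $\rho$ and is central in $\U_{\Z/2\Z}$, giving that $\Lambda\otimes1$ and $1\otimes\Lambda$ are central in $\U_{\Z/2\Z}^{\otimes 2}$. Substituting $EF=\tfrac14(\Lambda+2H-H^2)$ I would obtain
\[
\{Z,X\}-Y=\tfrac12(\Lambda\otimes1-1\otimes\Lambda)+Y-\tfrac12(H^2\otimes1-1\otimes H^2).
\]
On $V(1)$ the factorization $H^2\otimes1-1\otimes H^2=(H\otimes1-1\otimes H)\Delta(H)$ together with $\Delta(H)=1$ yields $H^2\otimes1-1\otimes H^2=H\otimes1-1\otimes H=2Y$, so the last two terms cancel and $\{Z,X\}-Y=\tfrac12(\Lambda\otimes1-1\otimes\Lambda)$ as operators on $V(1)$. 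Because $\Lambda\otimes1-1\otimes\Lambda$ is central and $X,Y,Z$ preserve $V(1)$, it follows that $\{Z,X\}-Y$ commutes with each of $X,Y,Z$ on $V(1)$. Hence all three elements of (\ref{BI:centers}) act centrally, the assignment (\ref{X:V(1)})--(\ref{Z:V(1)}) defines a $\BI$-module structure on $V(1)$, and the proof is complete.
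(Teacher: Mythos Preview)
Your proof is correct and follows essentially the same route as the paper: show that $X,Y,Z$ preserve $V(1)$, verify $\{X,Y\}=Z$ and $\{Y,Z\}=X$ as identities in $\U_{\Z/2\Z}^{\otimes 2}$, and then show that on $V(1)$ the element $\{Z,X\}-Y$ equals a scalar multiple of the central element $\Lambda\otimes1-1\otimes\Lambda$, using the factorization of $H^2\otimes1-1\otimes H^2$ together with $\Delta(H)|_{V(1)}=1$. Your introduction of $P=E\rho\otimes\rho$ and $Q=\rho\otimes E\rho$ with $X=P+Q$, $Z=P-Q$ is a tidy organizational device that streamlines the anticommutator computations compared with the paper's four separate identities; note only that your Casimir $\Lambda=2EF+2FE+H^2$ is twice the paper's normalization in (\ref{Lambda}), so your $\tfrac12(\Lambda\otimes1-1\otimes\Lambda)$ matches the paper's $\Lambda\otimes1-1\otimes\Lambda$.
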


Recall that a {\it Leonard pair} consists of two diagonalizable linear operators on a nonzero finite-dimensional vector space,  such that each acts in an irreducible tridiagonal fashion on an eigenbasis of the other.
As a natural extension,  a {\it Leonard triple} consists of three diagonalizable linear operators on a nonzero finite-dimensional vector space, such that any two act in an irreducible tridiagonal fashion on an eigenbasis of the third.
For the formal definitions, we refer the reader to \cite{lp2001, cur2007-2}.
Leonard pairs and Leonard triples form an important class of operators satisfying the Askey--Wilson relations \cite{LTRacah,Huang:2012,LT2013,LT2015,Vidunas:2007,lp&awrelation,halved:2024}. Consequently, they are naturally connected with $\BI$-modules. 
We clarify the position of Theorem \ref{thm:V(1)} within the framework of Leonard triples in the second main result:

\begin{thm}
\label{thm:V(1)&LT}
Suppose that $V$ is a finite-dimensional irreducible $\U_{\Z/2\Z}^{\otimes 2}$-module. 
Then the following conditions are equivalent:
\begin{enumerate}
\item $V(1)$ is nonzero.

\item The $\BI$-module $V(1)$ is irreducible.

\item $X,Y,Z$ act on the $\BI$-module $V(1)$ as a Leonard triple.
\end{enumerate}
\end{thm}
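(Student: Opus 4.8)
The plan is to establish the cycle of implications $(3)\Rightarrow(2)\Rightarrow(1)\Rightarrow(3)$, in which the first two are immediate and essentially all of the content lies in $(1)\Rightarrow(3)$. For $(3)\Rightarrow(2)$ I would invoke a general feature of Leonard pairs: if $X,Y,Z$ act as a Leonard triple on $V(1)$, then in particular $X,Y$ act as a Leonard pair, and a Leonard pair admits no invariant subspace other than $0$ and the whole space. Since $X$ and $Y$ lie in the image of $\BI$ in $\End(V(1))$, it follows that $V(1)$ is irreducible as a $\BI$-module. The implication $(2)\Rightarrow(1)$ holds because an irreducible module is by convention nonzero.

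For $(1)\Rightarrow(3)$ I would first reduce to an explicit model. Using the classification of finite-dimensional irreducible $\U_{\Z/2\Z}$-modules—each obtained from an irreducible $\U$-module of dimension $n+1$ by adjoining one of the two possible actions of $\rho$—together with the standard description of irreducible modules over a tensor product of algebras over $\C$, one writes $V\cong M\otimes N$ with $M,N$ finite-dimensional irreducible $\U_{\Z/2\Z}$-modules of dimensions $m+1$ and $n+1$. Decomposing $V$ into weight spaces of $\Delta(H)$ and applying (\ref{V(theta)}), one sees that $V(1)$ is spanned by the weight vectors $v_i\otimes v_{1-i}$ and is nonzero precisely when $m+n$ is odd, in which case $\dim V(1)=\min(m,n)+1$. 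Thus condition $(1)$ is equivalent to $m+n$ being odd, and the whole theorem reduces to showing that, for $m+n$ odd, the operators (\ref{X:V(1)})--(\ref{Z:V(1)}) act on $V(1)$ as a Leonard triple.

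The computational heart of the argument is carried out in this weight basis. By (\ref{Y:V(1)}) the operator $Y$ is diagonal with the pairwise distinct eigenvalues $i-\tfrac12$, so $Y$ is multiplicity-free. Expanding (\ref{X:V(1)}) and (\ref{Z:V(1)}) by (\ref{DeltaE})--(\ref{Deltarho}) gives $X=E\rho\otimes\rho+\rho\otimes E\rho$ and $Z=E\rho\otimes\rho-\rho\otimes E\rho$; since $\rho$ sends a weight $k$ to $-k$ and $E$ raises the weight by $2$, each summand moves the weight index $i$ by one of the reflections $i\mapsto 2-i$ or $i\mapsto -i$. Using the relation $(E\rho)^2=EF$, one checks that the three central elements (\ref{BI:centers}) of $\BI$ reduce on $V(1)$ to expressions in the two Casimir elements $\Omega\otimes 1$ and $1\otimes\Omega$ and in $\Delta(H)=1$; hence they act on $V(1)$ as explicit scalars determined by the Casimir eigenvalues on $M$ and $N$. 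This realizes $V(1)$ as a module over a Bannai--Ito algebra with prescribed parameters.

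It remains to verify that these data genuinely yield a Leonard triple, and this is where I expect the main difficulty. Unlike $Y$, the operators $X$ and $Z$ act by reflections rather than by simple raising and lowering, so their tridiagonal shape with respect to an eigenbasis of $Y$—and, symmetrically, with respect to an eigenbasis of each other—emerges only after a suitable \emph{folded} reordering of the eigenvectors. The work is therefore threefold: to prove that $X$ and $Z$ are diagonalizable on $V(1)$; to exhibit, for each of the three pairs, the ordering in which one operator is tridiagonal in an eigenbasis of the other; and to show that every relevant subdiagonal and superdiagonal entry is nonzero, which is exactly the irreducibility half of the Leonard-triple axioms. Given the multiplicity-freeness of $Y$ and the explicit scalar values of (\ref{BI:centers}), this can be completed either by a direct entrywise analysis or by matching $V(1)$ against the known classification of Leonard triples of Bannai--Ito type; in either route the decisive point is the non-vanishing of the off-diagonal entries, which confirms that one is in the non-degenerate Leonard-triple regime rather than a reducible or multiplicity-carrying degeneration.
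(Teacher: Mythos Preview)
Your outline is correct and follows the same overall strategy as the paper: reduce to a tensor product of two irreducible $\U_{\Z/2\Z}$-modules, compute in the $\Delta(H)$-weight basis, and verify the Leonard-triple axioms. Two tactical points where the paper is sharper are worth noting. First, you leave the four sign choices on $M$ and $N$ (and the order of the factors) implicit; the paper handles all eight possibilities at once by introducing a $D_4$-action on $\U_{\Z/2\Z}^{\otimes 2}$ that permutes them and descends to the $\{\pm 1\}^2$-action on $\BI$, so it suffices to treat $L_m^+\otimes L_n^+$ with $m\le n$. Second, for the step you flag as the main difficulty, the paper bypasses any ``folded reordering'' or direct entrywise check: it first proves irreducibility of $(L_m^+\otimes L_n^+)(1)$ by an elementary argument in the weight basis (using exactly your formulas for $X$ and $Y$), then computes the traces of $X,Y,Z$ and the scalars of $\kappa,\lambda,\mu$ to identify the $\BI$-module with a specific $O_m(\cdots)$ or $E_m(\cdots)^{(-1,1)}$, and finally invokes the paper's general Leonard-triple criteria (Theorems~\ref{thm:On(a,b,c)&LT} and~\ref{thm:En(a,b,c)&LT}) for those modules. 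Your route via the Leonard-triple classification would also work, but note that matching against the \emph{$\BI$-module} classification requires irreducibility as an input, so in that variant you would need to establish (ii) before (iii) rather than deducing it afterwards.
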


For any set $X$ let $\C^X$ denote the vector space over $\C$ with basis $X$.
Let $\Omega$ be a finite set and write $2^\Omega$ for the power set of $\Omega$. 
For any $x_0\in 2^\Omega$ we equip $\C^{2^\Omega}$ with a $\U_{\Z/2\Z}^{\otimes 2}$-module structure, denoted by 
$$
\C^{2^\Omega}(x_0).
$$ 
By Theorem \ref{thm:V(1)} this endows $\C^{2^\Omega}(x_0)(1)$ with a $\BI$-module structure.

We now assume that $\Omega$ is a $(2d+1)$-element set and view ${\Omega\choose d}$, the collection of $d$-element subsets of $\Omega$, as the vertex set of $O_{d+1}$. Let $\A$ denote the adjacency matrix of $O_{d+1}$. 
The action of $\A$ on $\C^{{\Omega\choose d}}$ is given by 
\begin{align}\label{A}
    \A x= \sum_{x\cap y=\emptyset} y \qquad
    \qquad \hbox{for all $x\in {\Omega\choose d}$},
\end{align}
where the sum is over all $y\in  {\Omega\choose d}$ with $x\cap y=\emptyset$. 
Pick any $x_0\in {\Omega\choose d}$.  Let $\A^*(x_0)$ denote the dual adjacency matrix of $O_{d+1}$ with respect to $x_0$. Recall from \cite{BannaiIto1984,DRG_book1989} that the action of $\A^*(x_0)$ on $\C^{{\Omega\choose d}}$ is given by 
\begin{align}\label{A^*}
    \A^*(x_0)x=
\left(
2d-\frac{2d+1}{d+1}(|x_0\setminus x|+|x\setminus x_0|)
\right)x 
    \qquad \hbox{for all $x\in {\Omega\choose d}$}.
\end{align}   
Let $\T(x_0)$ denote the Terwilliger algebra of $O_{d+1}$ with respect to $x_0$. 
As vector spaces, the $\BI$-module $\C^{2^\Omega}(x_0)(1)$ coincides with the standard $\T(x_0)$-module $\C^{{\Omega\choose d}}$. 
Building on this identification, our third main result gives a combinatorial realization of Theorems \ref{thm:V(1)} and \ref{thm:V(1)&LT}:

\begin{thm}\label{thm:BI->T} 
Let $d\geq 1$ be an integer and let $\Omega$ be a $(2d+1)$-element set.  
For any $x_0\in {\Omega \choose d}$ the following statements hold:
\begin{enumerate}
\item There exists a unique algebra homomorphism $\BI\to \T(x_0)$ given by
\begin{eqnarray*}
X &\mapsto& \A,
\\
Y&\mapsto& \frac{d+1}{2d+1} \A^*(x_0)+\frac{1}{2(2d+1)},
\\
Z&\mapsto&\frac{d+1}{2d+1}\{\A,\A^*(x_0)\}+\frac{1}{2d+1}\A.
\end{eqnarray*}
In particular the homomorphism is surjective.

\item The elements $X,Y,Z$ act on each irreducible $\T(x_0)$-module as a Leonard triple.
\end{enumerate}
\end{thm}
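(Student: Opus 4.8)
The plan is to transport the $\BI$-module structure of Theorem~\ref{thm:V(1)} across the given vector-space identification $\C^{2^\Omega}(x_0)(1)=\C^{\binom{\Omega}{d}}$ and read it off in terms of $\A$ and $\A^*(x_0)$. Let $\pi\colon\BI\to\End\bigl(\C^{\binom{\Omega}{d}}\bigr)$ be the representation afforded by the $\BI$-module $\C^{2^\Omega}(x_0)(1)$; it is an algebra homomorphism because $\C^{2^\Omega}(x_0)(1)$ is a $\BI$-module. The core of part (i) is to evaluate the operators $\pi(X)=\Delta(E\rho)$, $\pi(Y)=\tfrac12(H\otimes1-1\otimes H)$ and $\pi(Z)=(E\otimes1-1\otimes E)\Delta(\rho)$ on the standard basis $\{x\mid x\in\binom{\Omega}{d}\}$ using the explicit $\U_{\Z/2\Z}^{\otimes 2}$-module structure on $\C^{2^\Omega}(x_0)$. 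Applying $\Delta(\rho)$ (the global membership toggle $x\mapsto\Omega\setminus x$) and then $\Delta(E)=E\otimes1+1\otimes E$ sends $x$ to the sum of the $d$-subsets of $\Omega\setminus x$, giving $\pi(X)=\A$ by \eqref{A}. Since $H\otimes1$ and $1\otimes H$ act diagonally, so does $\pi(Y)$; writing $k=|x\cap x_0|$ and counting the elements of $\Omega$ according to their membership in $x$ and in $x_0$ shows the eigenvalue of $\pi(Y)$ on $x$ equals $2k-d+\tfrac12$, which by \eqref{A^*} is exactly the eigenvalue of $\tfrac{d+1}{2d+1}\A^*(x_0)+\tfrac1{2(2d+1)}$, so $\pi(Y)=\tfrac{d+1}{2d+1}\A^*(x_0)+\tfrac1{2(2d+1)}$. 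For the third identity I would use that the claimed image of $Z$ equals $\{\pi(X),\pi(Y)\}$; since $\{X,Y\}-Z$ is one of the central elements \eqref{BI:centers}, the desired formula $\pi(Z)=\tfrac{d+1}{2d+1}\{\A,\A^*(x_0)\}+\tfrac1{2d+1}\A$ is equivalent to the assertion that this central element is annihilated by $\pi$, which I would confirm by a direct evaluation of $\pi(Z)$ on the basis (equivalently, by checking that the central operator $\pi(\{X,Y\}-Z)$ acts as $0$ on each isotypic component).

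Granting these three identities, $\pi(X),\pi(Y),\pi(Z)\in\T(x_0)$, so the image of $\pi$ — the subalgebra generated by them — is contained in $\T(x_0)$. As $\BI$ is generated by $X,Y,Z$, the map $\pi$ corestricts to an algebra homomorphism $\BI\to\T(x_0)$ with the stated images, and it is the unique such homomorphism because $X,Y,Z$ generate $\BI$. Surjectivity follows at once: $\A=\pi(X)$ and $\A^*(x_0)=\tfrac{2d+1}{d+1}\bigl(\pi(Y)-\tfrac1{2(2d+1)}\bigr)$ lie in the image, and these two elements generate $\T(x_0)$.

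For part (ii) I would first record two structural facts: $\T(x_0)$ is a semisimple $\C$-algebra, since $\A$ and $\A^*(x_0)$ are self-adjoint and hence $\T(x_0)$ is a $*$-closed subalgebra of a full matrix algebra; and $\C^{\binom{\Omega}{d}}$ is a faithful $\T(x_0)$-module by the definition of $\T(x_0)$, so every irreducible $\T(x_0)$-module occurs, up to isomorphism, as a constituent of the standard module. Now decompose $\C^{2^\Omega}(x_0)=\bigoplus_i V_i$ into irreducible $\U_{\Z/2\Z}^{\otimes 2}$-modules; each $V_i$ is $\Delta(H)$-invariant, whence $\C^{2^\Omega}(x_0)(1)=\bigoplus_i V_i(1)$, and by Theorem~\ref{thm:V(1)&LT} every nonzero $V_i(1)$ is an irreducible $\BI$-module on which $X,Y,Z$ act as a Leonard triple. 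Because the $\BI$-action on $\C^{\binom{\Omega}{d}}$ factors through the surjection of part (i), the $\BI$-submodules of $\C^{\binom{\Omega}{d}}$ are exactly its $\T(x_0)$-submodules; hence the nonzero $V_i(1)$ are precisely the irreducible $\T(x_0)$-constituents of the standard module, and by the faithfulness remark they represent all isomorphism classes of irreducible $\T(x_0)$-modules. Since the Leonard-triple property is invariant under module isomorphism, $X,Y,Z$ act as a Leonard triple on every irreducible $\T(x_0)$-module.

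The main obstacle is the explicit operator computation in part (i): fixing the conventions of the $\U_{\Z/2\Z}^{\otimes 2}$-module structure so that $\pi(X)=\A$ and the eigenvalue of $\pi(Y)$ emerge with the correct signs, and verifying that $\{X,Y\}-Z$ is killed by $\pi$ so that the $Z$-identity holds exactly. Once the three identities are in hand, existence, surjectivity and uniqueness in (i), and the classification of Leonard triples in (ii), are formal.
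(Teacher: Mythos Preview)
Your approach is correct and essentially the same as the paper's. The one place you can streamline: the ``obstacle'' of showing $\pi(\{X,Y\}-Z)=0$ is already in hand from Lemma~\ref{lem:klm&Casimir_V(1)}(i) (a byproduct of the proof of Theorem~\ref{thm:V(1)}, namely equation~\eqref{V(1):BI-1}), which gives $\kappa=0$ on $V(1)$ for \emph{every} $\U_{\Z/2\Z}^{\otimes 2}$-module $V$---no isotypic-component check or direct evaluation of $\pi(Z)$ is needed.
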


The paper is organized as follows: Section \ref{s:UZ/2Z_presentation} provides a presentation of 
the algebra $\U_{\Z/2\Z}$.
Section \ref{s:proofV(1)} contains the proof of Theorem \ref{thm:V(1)}.
In Section \ref{s:UZ/2Zmodule}, we classify finite-dimensional irreducible $\U_{\Z/2\Z}$-modules and show that every finite-dimensional $\U_{\Z/2\Z}$-module is completely reducible. Section \ref{s:BImodule} provides the necessary background on finite-dimensional irreducible $\BI$-modules. 
Section \ref{s:LT&BI} characterizes when the action of $X,Y,Z$ on a finite-dimensional irreducible $\BI$-module forms a Leonard triple. 
Section \ref{s:proofV(1)&LT} provides the proof of Theorem \ref{thm:V(1)&LT}.
The argument for Theorem \ref{thm:BI->T} is presented in Section \ref{s:proofBI->T}. 
Finally, we present the Clebsch--Gordan rule for $\U_{\Z/2\Z}$,  which provides the decomposition formula for finite-dimensional irreducible $\U_{\Z/2\Z}^{\otimes 2}$-modules into direct sums of irreducible $\U_{\Z/2\Z}$-modules.


\section{A presentation for $\U_{\Z/2\Z}$}
\label{s:UZ/2Z_presentation}

Recall that $\mathfrak{sl}_2$ is a three-dimensional Lie algebra over $\C$ with basis $E,F,H$. By (\ref{EFH_matrices}) the matrices $E,F,H$ satisfy 
\begin{align}
[H,E]&=2E, 
\label{[HE]}
\\
[H,F]&=-2F,
\label{[HF]}
\\ 
[E,F]&=H.
\label{[EF]}
\end{align}
Hence $\U$ is an algebra over $\C$ generated by $E,F,H$ subject to the relations (\ref{[HE]})--(\ref{[EF]}).

\begin{lem}\label{lem:Ubasis}
The elements  
\begin{align}\label{e:Ubasis}
 E^iF^j H^k
 \qquad 
 \hbox{for all $i,j,k\in \N$}
\end{align}
are a basis for $\U$.
\end{lem}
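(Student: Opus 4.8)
The statement is precisely the Poincar\'e--Birkhoff--Witt basis theorem for $\U$ specialized to the ordered basis $E,F,H$ of $\mathfrak{sl}_2$, so the plan splits into two independent claims: that the monomials (\ref{e:Ubasis}) span $\U$, and that they are linearly independent.

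For spanning, the plan is to straighten an arbitrary word in $E,F,H$ into the standard order $E\prec F\prec H$ using only the relations (\ref{[HE]})--(\ref{[EF]}). Rewriting these as the rules $FE=EF-H$, $HE=EH+2E$, and $HF=FH-2F$, I observe that each rule takes an adjacent pair that is out of order and replaces it by the correctly ordered pair, together with a term of strictly smaller total degree. I would attach to each word the pair consisting of its total degree and its total number of inversions, where an inversion is a pair of positions whose letters violate $E\prec F\prec H$; applying any one of the three rules to an adjacent out-of-order pair leaves the degree unchanged but lowers the inversion count by one for the leading term, while the correction term has strictly smaller degree. Since this measure decreases in the lexicographic order of $\N\times\N$ at every step and a word with no adjacent inversion is already in standard order, the rewriting terminates and expresses every element of $\U$ as a linear combination of the monomials (\ref{e:Ubasis}).

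Linear independence is the substantive part, and it is exactly the content of PBW. The plan is to realize $\U$ on a vector space where the images of (\ref{e:Ubasis}) are manifestly independent. Concretely, I would take $W$ with basis $\{w_{ijk}\mid i,j,k\in\N\}$, let $E$ raise the first index, and define the actions of $F$ and $H$ recursively so as to enforce $FE=EF-H$, $HE=EH+2E$, and $HF=FH-2F$, arranged so that $F$ and $H$ raise the second and third indices respectively, up to correction terms supported in strictly smaller total degree. The crux, which I expect to be the main obstacle, is verifying that these operators satisfy (\ref{[HE]})--(\ref{[EF]}), so that $W$ becomes a genuine $\U$-module; granting this, an easy induction gives $E^iF^jH^k\,w_{000}=w_{ijk}+(\text{terms of smaller degree})$, so the images of (\ref{e:Ubasis}) are triangular and hence linearly independent, forcing the monomials themselves to be independent in $\U$.

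Since the Poincar\'e--Birkhoff--Witt theorem is already available in the preliminaries, a shorter route is simply to cite it: $E,F,H$ is an ordered basis of $\mathfrak{sl}_2$, and PBW asserts that the corresponding ordered monomials form a basis of $U(\mathfrak{sl}_2)$. I would likely give the spanning argument explicitly, as it is short and uses the defining relations directly, and then invoke PBW for independence rather than reconstructing the module $W$ by hand.
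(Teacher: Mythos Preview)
Your proposal is correct, and the paper's own proof is simply the one-line citation of PBW that you describe in your final paragraph; the spanning and independence arguments you sketch are sound but go well beyond what the paper does.
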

\begin{proof}
By Poincar\'{e}--Birkhoff--Witt theorem the lemma follows.
\end{proof}

\begin{lem}\label{lem:UZ/2Z_basis}
The elements  
\begin{align}\label{e:UZ/2Z_basis}
 E^iF^j H^k\rho^h \qquad 
 \hbox{ for all $i,j,k\in \N$ and $h\in\{0,1\}$}
\end{align}
are a basis for $\U_{\Z/2\Z}$.
\end{lem}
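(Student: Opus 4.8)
The plan is to reduce the claim to two facts that are already available: the description of a skew group ring as a free left module over the base ring, and the Poincar\'{e}--Birkhoff--Witt basis of $\U$ recorded in Lemma \ref{lem:Ubasis}.

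First I would unwind the definition of the skew group ring given in Section \ref{s:skew}. By construction every element of $\U_{\Z/2\Z}$ is a formal sum $a_0\cdot 1+a_1\cdot\rho$ with $a_0,a_1\in\U$ uniquely determined, and addition is componentwise. Hence, viewed as a vector space over $\C$, we have the internal direct sum $\U_{\Z/2\Z}=\U\cdot 1\oplus \U\cdot\rho$, with the group elements $1,\rho$ serving as a free left $\U$-basis. In particular, a family of the form $\{b_\alpha\,1\}_\alpha\cup\{b_\alpha\,\rho\}_\alpha$ is a $\C$-basis of $\U_{\Z/2\Z}$ precisely when $\{b_\alpha\}_\alpha$ is a $\C$-basis of $\U$.

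Next I would confirm that the listed elements $E^iF^jH^k\rho^h$ really are the products $\bigl(E^iF^jH^k\bigr)\cdot\rho^h$ under the identifications $a\leftrightarrow a\cdot 1$ and $g\leftrightarrow 1\cdot g$, and that these lie in the correct summand. This is immediate from the multiplication rule (\ref{skew:product}): taking $a_1=E^iF^jH^k$, $g_1=1$, $a_2=1$, $g_2=\rho^h$ and using that $\varphi(1)$ is the identity, the product equals $E^iF^jH^k\,\rho^h$, i.e.\ the formal sum whose $\rho^h$-component is $E^iF^jH^k$ and whose other component is $0$. Thus the proposed family splits, according to the value of $h\in\{0,1\}$, into the two blocks $\{E^iF^jH^k\cdot 1\}$ and $\{E^iF^jH^k\cdot \rho\}$ sitting in the respective summands $\U\cdot 1$ and $\U\cdot\rho$.

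Finally I would invoke Lemma \ref{lem:Ubasis}, which asserts that $\{E^iF^jH^k\mid i,j,k\in\N\}$ is a $\C$-basis of $\U$, and combine it with the free-module observation from the first step: since each summand $\U\cdot\rho^h$ acquires the basis $\{E^iF^jH^k\,\rho^h\}$, their union is a basis of the direct sum $\U_{\Z/2\Z}$. Linear independence and spanning both pass through the direct-sum decomposition one component at a time, so no new computation is needed. There is no substantive obstacle here; the only point requiring care is the bookkeeping in the second step, namely verifying via (\ref{skew:product}) that the symbols $E^iF^jH^k\rho^h$ coincide with genuine products in $\U_{\Z/2\Z}$ and occupy a single summand, after which the result is a formal consequence of Lemma \ref{lem:Ubasis}.
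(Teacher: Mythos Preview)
Your proof is correct and follows exactly the route the paper takes: the paper's proof is the single line ``Immediate from Lemma \ref{lem:Ubasis} and the definition of $\U_{\Z/2\Z}$,'' and your write-up simply unpacks this by making explicit the free left $\U$-module decomposition $\U_{\Z/2\Z}=\U\cdot 1\oplus \U\cdot\rho$ coming from the skew group ring construction and then applying the PBW basis componentwise.
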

\begin{proof}
Immediate from Lemma \ref{lem:Ubasis} and the definition of $\U_{\Z/2\Z}$.
\end{proof}

\begin{thm}\label{thm:UZ/2Z_presentation}
The algebra $U(\mathfrak{sl}_2)_{\Z/2\Z}$ has a presentation generated by $E,F,H,\rho$ subject to the relations {\rm(\ref{[HE]})--(\ref{[EF]})} and
\begin{align}
    \rho H &=-H\rho,
      \label{UZ/2Z:Hrho}\\
    \rho E &= F\rho,
    \label{UZ/2Z:rhoE}\\
    \rho^2 &=1.
    \label{UZ/2Z:rho^2}
    \end{align}
\end{thm}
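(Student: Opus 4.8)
The plan is to exhibit the abstractly presented algebra as the domain of a surjection onto $\U_{\Z/2\Z}$ and then match it against the basis already recorded in Lemma \ref{lem:UZ/2Z_basis}. Let $\mathcal{A}$ denote the algebra over $\C$ generated by symbols $E,F,H,\rho$ subject to the relations \eqref{[HE]}--\eqref{[EF]} and \eqref{UZ/2Z:Hrho}--\eqref{UZ/2Z:rho^2}. First I would check that the like-named elements of $\U_{\Z/2\Z}$ satisfy every imposed relation. The relations \eqref{[HE]}--\eqref{[EF]} hold because $\U$ is a subalgebra of $\U_{\Z/2\Z}$. Relations \eqref{UZ/2Z:Hrho} and \eqref{UZ/2Z:rhoE} are read directly from the multiplication rule \eqref{skew:product} together with the description \eqref{rho} of $\rho\in\Aut(\sl)$: for $u\in\sl$ one has $\rho\cdot u=\rho(u)\,\rho$, and since $\rho(H)=-H$, $\rho(E)=F$ this gives $\rho H=-H\rho$ and $\rho E=F\rho$. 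Finally \eqref{UZ/2Z:rho^2} holds because $\rho$ is involutive. As $E,F,H,\rho$ generate $\U_{\Z/2\Z}$, the universal property of a presentation yields a surjective algebra homomorphism $\pi:\mathcal{A}\to\U_{\Z/2\Z}$ sending each generator to the generator of the same name.

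The core of the argument is to show that $\mathcal{A}$ is spanned by the monomials $E^iF^jH^k\rho^h$ with $i,j,k\in\N$ and $h\in\{0,1\}$, which I would do in two stages. Within the subalgebra $\mathcal{A}_0\subseteq\mathcal{A}$ generated by $E,F,H$, the relations \eqref{[HE]}--\eqref{[EF]} are precisely the defining relations of $\U$, so the standard Poincar\'{e}--Birkhoff--Witt reordering argument applies: swapping any out-of-order adjacent pair of generators produces a lower-degree correction coming from the commutator, and inducting on degree and on the number of inversions shows $\mathcal{A}_0=\Span\{E^iF^jH^k\mid i,j,k\in\N\}$. Next I would derive from \eqref{UZ/2Z:rhoE} and \eqref{UZ/2Z:rho^2} the companion relation $\rho F=E\rho$ (multiply $\rho E=F\rho$ on the left by $\rho$ and then on the right by $\rho$). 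Together with \eqref{UZ/2Z:Hrho}, relations \eqref{UZ/2Z:rhoE} and $\rho F=E\rho$ allow one to push every occurrence of $\rho$ past any element of $\mathcal{A}_0$ to the right, at the cost of applying the automorphism $\rho$ to that element, and \eqref{UZ/2Z:rho^2} then collapses $\rho$-powers to exponent $h\in\{0,1\}$. Hence an arbitrary word in $E,F,H,\rho$ equals an element of $\mathcal{A}_0$ times $\rho^h$, and re-expressing the $\mathcal{A}_0$ part in normal form yields the claimed spanning set.

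I would conclude by a rank count. The homomorphism $\pi$ carries the spanning monomials $E^iF^jH^k\rho^h$ of $\mathcal{A}$ onto the corresponding elements of $\U_{\Z/2\Z}$, which form a basis by Lemma \ref{lem:UZ/2Z_basis}; in particular their images are linearly independent. Therefore any linear relation among the spanning monomials in $\mathcal{A}$ maps under $\pi$ to a relation among basis vectors of $\U_{\Z/2\Z}$, forcing all coefficients to vanish. Thus the spanning set is in fact a basis of $\mathcal{A}$, and $\pi$ identifies it bijectively with a basis of $\U_{\Z/2\Z}$, so $\pi$ is the desired isomorphism. I expect the only delicate point to be the careful bookkeeping in the reordering stage; every other step is an immediate consequence of the relations and of the basis established in Lemma \ref{lem:UZ/2Z_basis}.
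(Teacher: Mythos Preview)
Your proposal is correct and follows essentially the same route as the paper: verify the relations in $\U_{\Z/2\Z}$, build the surjection from the abstract algebra, show the ordered monomials $E^iF^jH^k\rho^h$ span (by reordering and pushing $\rho$ to the right), and deduce linear independence from Lemma~\ref{lem:UZ/2Z_basis}. The paper packages the spanning step slightly differently---it embeds $\U$ into the abstract algebra and then observes that $\U+\U\rho$ is a right ideal containing $1$---but this is the same argument in different clothing.
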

\begin{proof} 
Since $\U\subseteq \U_{\Z/2\Z}$ the relations (\ref{[HE]})--(\ref{[EF]}) hold in $\U_{\Z/2\Z}$. 
By (\ref{skew:product}) and (\ref{rho}), the relations (\ref{UZ/2Z:Hrho})--(\ref{UZ/2Z:rho^2}) hold in $\U_{\Z/2\Z}$. 
Let $\U_{\Z/2\Z}'$ denote the algebra over $\C$ generated by the symbols $E,F,H,\rho$ subject to the relations (\ref{[HE]})--(\ref{[EF]}) and (\ref{UZ/2Z:Hrho})--(\ref{UZ/2Z:rho^2}).  
By construction, there exists a unique algebra homomorphism $\Phi:\U_{\Z/2\Z}'\to \U_{\Z/2\Z}$ that sends 
\begin{eqnarray*}
(E,F,H,\rho) &\mapsto & (E,F,H,\rho).
\end{eqnarray*}
Combined with Lemma \ref{lem:UZ/2Z_basis} the elements (\ref{e:UZ/2Z_basis}) of $\U_{\Z/2\Z}'$ are linearly independent. 
To see that $\Phi$ is an isomorphism, it remains to verify that the elements (\ref{e:UZ/2Z_basis}) span $\U_{\Z/2\Z}'$.

Since the relations (\ref{[HE]})--(\ref{[EF]}) hold in $\U_{\Z/2\Z}'$, there exists a unique algebra homomorphism $\Psi:\U\to \U_{\Z/2\Z}'$ that sends 
\begin{eqnarray*}
(E,F,H) &\mapsto & (E,F,H).
\end{eqnarray*}
Since the elements (\ref{e:Ubasis}) of $\U_{\Z/2\Z}'$ are linearly independent, it follows from Lemma \ref{lem:Ubasis} that $\Psi$ is injective. Hence $\U$ can be considered as a subalgebra of $\U_{\Z/2\Z}'$. 
Let $\mathcal I$ 
denote the subspace of $\U_{\Z/2\Z}'$ spanned by  
(\ref{e:UZ/2Z_basis}); equivalently
\begin{gather*}
\mathcal I=\U+\U\rho.
\end{gather*}
Using (\ref{UZ/2Z:Hrho})--(\ref{UZ/2Z:rho^2}) yields that $\mathcal I$ is closed under right multiplication by  $E,F,H,\rho$. 
Since the algebra $\U_{\Z/2\Z}'$ is generated by these elements, it follows that $\mathcal I$ is a right ideal of $\U_{\Z/2\Z}'$. Finally, since $1\in \mathcal I$ we conclude that $\mathcal I=\U_{\Z/2\Z}'$. The result follows.
\end{proof}


\section{Proof for Theorem \ref{thm:V(1)}} 
\label{s:proofV(1)}

Recall $\Delta(H)$ and $\Delta(\rho)$ from \eqref{DeltaH} and \eqref{Deltarho}.

\begin{lem}
\label{prelem:V(theta)}
The following relations hold in $\U_{\Z/2\Z}^{\otimes 2}$:
\begin{enumerate}
\item $[\Delta(H),1\otimes H]=0$.

\item $[\Delta(H),H\otimes 1]=0$.

\item $[\Delta(H),1\otimes E]=2(1\otimes E)$.

\item $[\Delta(H),E\otimes 1]=2(E\otimes 1)$.

\item $\{\Delta(H),\Delta(\rho)\}=0$.
\end{enumerate}
\end{lem}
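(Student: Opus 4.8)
The plan is to verify each of the five relations by a direct computation, expanding $\Delta(H)$ and $\Delta(\rho)$ through their definitions \eqref{DeltaH} and \eqref{Deltarho} and then reducing everything to the defining relations of $\U_{\Z/2\Z}$ together with the rule $(a\otimes b)(c\otimes d)=ac\otimes bd$ for multiplication in the tensor-product algebra $\U_{\Z/2\Z}^{\otimes 2}$.

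For parts (i)--(iv) I would first record the elementary observation that factors living in different tensor slots commute: for any $a,b\in\U_{\Z/2\Z}$ one has $[a\otimes 1,1\otimes b]=0$, since both products equal $a\otimes b$. Writing $\Delta(H)=H\otimes 1+1\otimes H$, this cross-slot commutativity immediately annihilates the ``mixed'' commutators, and what survives in each case is a commutator confined to a single slot. For (i) and (ii) the surviving term is $[1\otimes H,1\otimes H]$ (respectively $[H\otimes 1,H\otimes 1]$), which vanishes trivially. For (iii) the surviving term is $[1\otimes H,1\otimes E]=1\otimes[H,E]$, which equals $2(1\otimes E)$ by \eqref{[HE]}; part (iv) is the same computation carried out in the left slot, yielding $[H,E]\otimes 1=2(E\otimes 1)$.

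The only part requiring the skew relation is (v). Here I would expand $\{\Delta(H),\Delta(\rho)\}=\Delta(H)\Delta(\rho)+\Delta(\rho)\Delta(H)$ and compute each product explicitly, obtaining $H\rho\otimes\rho+\rho\otimes H\rho$ and $\rho H\otimes\rho+\rho\otimes\rho H$ respectively. Applying the anticommuting relation \eqref{UZ/2Z:Hrho} in the form $H\rho=-\rho H$ to the first product turns it into the negative of the second, so the two summands cancel and the anticommutator vanishes.

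No genuine obstacle arises: the argument is a bookkeeping exercise in the tensor algebra, and the sole point demanding care is to remember that $\rho$ and $H$ \emph{anti}commute rather than commute, which is precisely what forces (v) to hold with a vanishing anticommutator rather than a vanishing commutator.
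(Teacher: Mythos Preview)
Your proposal is correct and follows essentially the same approach as the paper: a direct computation using the tensor-product multiplication rule, the relation \eqref{[HE]} for parts (iii)--(iv), and the anticommuting relation \eqref{UZ/2Z:Hrho} for part (v). The paper's proof is simply a terser version of what you have written.
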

\begin{proof}
(i), (ii): Both $H\otimes 1$ and $1\otimes H$ commute with $\Delta(H)$. 

(iii), (iv): These are obtained by evaluating the commutators using \eqref{[HE]}. 

(v): This follows by evaluating the anti-commutator using  \eqref{UZ/2Z:Hrho}.
\end{proof}

Recall the notation $V(\theta)$ from (\ref{V(theta)}) for any $\U_{\Z/2\Z}^{\otimes 2}$-module $V$ and any $\theta\in \C$.

\begin{lem}
\label{lem:V(theta)}
Let $V$ denote a $\U_{\Z/2\Z}^{\otimes 2}$-module. For any $\theta\in \C$ the following relations hold:
\begin{enumerate}
\item $(H\otimes 1) V(\theta)\subseteq V(\theta)$.

\item $(1\otimes H) V(\theta)\subseteq V(\theta)$.

\item $(1\otimes E) V(\theta)\subseteq V(\theta+2)$.

\item $(E\otimes 1) V(\theta)\subseteq V(\theta+2)$.

\item $\Delta(\rho) V(\theta)\subseteq V(-\theta)$.
\end{enumerate}
\end{lem}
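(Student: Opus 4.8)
The plan is to prove each of the five containments by the same mechanism: show that the relevant operator, when applied to a vector $v \in V(\theta)$, produces a vector on which $\Delta(H)$ acts with the prescribed eigenvalue. The essential tool is Lemma \ref{prelem:V(theta)}, which records the commutators and anticommutator of $\Delta(H)$ with each of the operators $H\otimes 1$, $1\otimes H$, $1\otimes E$, $E\otimes 1$, and $\Delta(\rho)$. The defining condition for $v\in V(\theta)$ is $\Delta(H)v=\theta v$, so in each case I would compute $\Delta(H)$ applied to the image vector and reduce it to a scalar multiple of that image vector.

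For parts (i) and (ii), since $H\otimes 1$ and $1\otimes H$ commute with $\Delta(H)$ by Lemma \ref{prelem:V(theta)}(i),(ii), I would write, for $v\in V(\theta)$,
\begin{align*}
\Delta(H)(H\otimes 1)v = (H\otimes 1)\Delta(H)v = \theta (H\otimes 1)v,
\end{align*}
so $(H\otimes 1)v\in V(\theta)$, and identically for $1\otimes H$. For parts (iii) and (iv), I would use the commutation relation $[\Delta(H),1\otimes E]=2(1\otimes E)$, which gives $\Delta(H)(1\otimes E)=(1\otimes E)\Delta(H)+2(1\otimes E)$; applying this to $v$ yields $\Delta(H)(1\otimes E)v=(\theta+2)(1\otimes E)v$, so $(1\otimes E)v\in V(\theta+2)$, and the same computation with $E\otimes 1$ handles (iv).

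For part (v), the anticommutator relation $\{\Delta(H),\Delta(\rho)\}=0$ from Lemma \ref{prelem:V(theta)}(v) means $\Delta(H)\Delta(\rho)=-\Delta(\rho)\Delta(H)$. Applying this to $v\in V(\theta)$ gives
\begin{align*}
\Delta(H)\Delta(\rho)v = -\Delta(\rho)\Delta(H)v = -\theta\,\Delta(\rho)v,
\end{align*}
so $\Delta(\rho)v\in V(-\theta)$, which is exactly the claimed containment.

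None of these steps presents a genuine obstacle; the lemma is essentially a direct bookkeeping exercise once Lemma \ref{prelem:V(theta)} is in hand. The only point requiring mild care is keeping track of signs and the shift of eigenvalue in the mixed commutator versus anticommutator cases—the raising operators $E\otimes 1$ and $1\otimes E$ shift the weight by $+2$, while $\Delta(\rho)$ negates it. I expect the proof to amount to little more than citing the appropriate clause of Lemma \ref{prelem:V(theta)} for each part and performing the one-line eigenvalue computation above.
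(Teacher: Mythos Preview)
Your proposal is correct and follows exactly the approach the paper intends: the paper's proof is the single line ``Immediate from Lemma~\ref{prelem:V(theta)},'' and what you have written is precisely the routine eigenvalue bookkeeping that this line suppresses. There is nothing to add or correct.
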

\begin{proof}
Immediate from Lemma \ref{prelem:V(theta)}.
\end{proof}

The distinguished central element
\begin{gather}
\label{Lambda}
\Lambda = EF + FE + \frac{H^2}{2}
\end{gather}
is called the {\it Casimir element} of $\U$. 
As in Section \ref{s:skew}, the Lie algebra automorphism $\rho$ of $\mathfrak{sl}_2$ can be regarded as an algebra automorphism of $\U$.

\begin{lem} 
\label{lem:Lambda}
\begin{enumerate}
\item The algebra automorphism $\rho$ of $\U$ fixes $\Lambda$.

\item The element $\Lambda$ is central in $\U_{\Z/2\Z}$.
\end{enumerate}
\end{lem}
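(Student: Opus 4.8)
The plan is to handle the two parts in sequence, with part (i) feeding directly into part (ii).

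For part (i), I would simply apply $\rho$ to the defining expression (\ref{Lambda}) for $\Lambda$ term by term. Using the action (\ref{rho}), namely $\rho(E)=F$, $\rho(F)=E$ and $\rho(H)=-H$, together with the fact that $\rho$ is an algebra automorphism, one computes $\rho(EF)=FE$, $\rho(FE)=EF$ and $\rho(H^2)=(-H)^2=H^2$. Summing these gives $\rho(\Lambda)=FE+EF+\frac{H^2}{2}=\Lambda$, so $\rho$ fixes $\Lambda$. This is a one-line verification requiring nothing beyond the explicit formula for $\rho$.

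For part (ii), I would first recall that $\Lambda$ is the Casimir element of $\U$ and hence central in $\U$; in particular it commutes with $E$, $F$ and $H$. Since by Theorem \ref{thm:UZ/2Z_presentation} the algebra $\U_{\Z/2\Z}$ is generated by $E$, $F$, $H$ and $\rho$, it remains only to show that $\Lambda$ commutes with $\rho$. Here I would invoke the skew group ring multiplication (\ref{skew:product}): regarding $\rho=1\cdot\rho$ and $\Lambda=\Lambda\cdot 1$, we have $\rho\,\Lambda=\rho(\Lambda)\,\rho$. By part (i) this equals $\Lambda\rho$, so $\Lambda$ and $\rho$ commute. Combining the two observations, $\Lambda$ commutes with every generator of $\U_{\Z/2\Z}$ and is therefore central.

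There is no genuine obstacle in this argument; its entire content is the skew group ring relation $\rho a=\rho(a)\rho$ for $a\in\U$, which converts the commutation of $\Lambda$ with $\rho$ into the automorphism-invariance established in part (i). The only point requiring care is to track the direction of the twist in (\ref{skew:product}) correctly, so that the $\rho$-conjugation lands on $\Lambda$ itself rather than producing a spurious interchange of $E$ and $F$; once part (i) is in hand this is immediate.
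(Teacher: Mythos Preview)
Your proposal is correct and follows essentially the same approach as the paper: part (i) is a direct term-by-term application of $\rho$ to (\ref{Lambda}), and part (ii) combines the centrality of $\Lambda$ in $\U$ with the skew group ring relation $\rho\,\Lambda=\rho(\Lambda)\,\rho$ from (\ref{skew:product}) together with part (i). The paper's write-up is terser (it leaves the reduction to generators implicit, relying on the sentence before the lemma that $\Lambda$ is central in $\U$), but the content is identical.
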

\begin{proof}
(i): Apply $\rho$ to both sides of \eqref{Lambda} and evaluate the resulting equation using \eqref{rho}.

(ii): By \eqref{skew:product} the product $\rho\cdot \Lambda$ is equal to $\rho(\Lambda)\cdot \rho$ in $\U_{\Z/2\Z}$.
By (i) this implies that $\Lambda$ commutes with $\rho$. 
\end{proof}

\begin{proof}[Proof of Theorem \ref{thm:V(1)}]
For convenience, we temporarily set $X,Y,Z$ to denote the right-hand sides of (\ref{X:V(1)})--(\ref{Z:V(1)}) respectively. 
Lemma \ref{lem:V(theta)} implies that $V(1)$ is invariant under $X,Y,Z$. 
By Lemma \ref{lem:Lambda}(ii), the Casimir element $\Lambda$ of $\U$ is central in $\U_{\Z/2\Z}$.
It therefore suffices to verify that the following relations hold on $V(1)$:
\begin{align}
\{X,Y\} &=Z,
\label{V(1):BI-1}
\\
\{Y,Z\} &= X,
\label{V(1):BI-2}
\\
\{Z,X\} &=Y+\Lambda\otimes 1-1\otimes \Lambda.
\label{V(1):BI-3}
\end{align}

Using \eqref{Deltarho} and \eqref{UZ/2Z:Hrho}  a direct calculation shows that 
\begin{align*}
\{(E\otimes 1)\Delta(\rho),1\otimes H\}&=0,
\\
\{(1\otimes E)\Delta(\rho),H\otimes 1\}&=0.
\end{align*}
Using \eqref{Deltarho}, \eqref{[HE]} and  \eqref{UZ/2Z:Hrho} a direct calculation shows that 
\begin{align*}
\{(E\otimes 1)\Delta(\rho),H\otimes 1\}&=2(E\otimes 1)\Delta(\rho),
\\
\{(1\otimes E)\Delta(\rho),1\otimes H\}
&=2(1\otimes E)\Delta(\rho).
\end{align*}
With these four identities, it is straightforward to verify the relations \eqref{V(1):BI-1} and \eqref{V(1):BI-2}.
Applying \eqref{DeltaE}, \eqref{Deltarho}, \eqref{UZ/2Z:rhoE} and \eqref{UZ/2Z:rho^2} a direct computation yields
\begin{align*}
\{(E\otimes 1)\Delta(\rho),\Delta(E\rho)\}
&=E\otimes F+F\otimes E+2(EF\otimes 1),
\\
\{ (1\otimes E)\Delta(\rho), \Delta(E\rho)\}
&=E\otimes F+F\otimes E+2(1\otimes EF).
\end{align*}
These identities imply that 
$$
\{Z,X\}-Y=
\left(2EF-\frac{H}{2}\right)\otimes 1
-
1\otimes\left(2EF-\frac{H}{2}\right).
$$ 
By \eqref{[EF]} and \eqref{Lambda} the element $2EF-\frac{H}{2}$ can be written as $\Lambda-\frac{H(H-1)}{2}$. Hence 
$\{Z,X\}-Y$ is equal to $\Lambda\otimes1-1\otimes \Lambda$
plus $\frac{1}{2}$ times  
\begin{gather*}
1\otimes H(H-1)-H(H-1)\otimes 1.
\end{gather*}
Using (\ref{DeltaH}) the above element factors as
$(1\otimes H-H\otimes 1)(\Delta(H)-1)$, which vanishes on 
$V(1)$ by \eqref{V(theta)}. 
Therefore \eqref{V(1):BI-3} holds on $V(1)$. The result follows.
\end{proof}

Let $\kappa,\lambda,\mu$ denote the defining central elements (\ref{BI:centers}) of $\BI$.  Explicitly,
\begin{align}
\kappa&=\{X,Y\}-Z,  \label{BI:center1}
\\
\lambda&=\{Y,Z\}-X, \label{BI:center2}
\\
\mu&=\{Z,X\}-Y.  \label{BI:center3}
\end{align}
In addition, the distinguished central element
$$
X^2+Y^2+Z^2
$$ 
is called the {\it Casimir element} of $\BI$ \cite[Equation (6)]{R&BI2015}.

\begin{lem}
\label{lem:klm&Casimir_V(1)}
For any $\U_{\Z/2\Z}^{\otimes 2}$-module $V$ the following statements hold:
\begin{enumerate}
\item $\kappa,\lambda,\mu$ act on the $\BI$-module $V(1)$ as $0$, $0$, $\Lambda\otimes 1-1\otimes \Lambda$ respectively.

\item $X^2+Y^2+Z^2$ acts on the $\BI$-module $V(1)$ as 
$\Lambda\otimes 1+1\otimes \Lambda+\frac{3}{4}$.
\end{enumerate}
\end{lem}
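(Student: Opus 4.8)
The plan is to handle the two parts separately: part (i) follows immediately from relations already established, while part (ii) is proved by computing $X^2+Y^2+Z^2$ as an identity in $\U_{\Z/2\Z}^{\otimes 2}$ and then restricting to $V(1)$.

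For part (i), I would simply invoke the three relations \eqref{V(1):BI-1}--\eqref{V(1):BI-3} that were shown to hold on $V(1)$ in the proof of Theorem \ref{thm:V(1)}. Substituting $\{X,Y\}=Z$, $\{Y,Z\}=X$, and $\{Z,X\}=Y+\Lambda\otimes1-1\otimes\Lambda$ into the definitions \eqref{BI:center1}--\eqref{BI:center3} of $\kappa,\lambda,\mu$ gives $\kappa=\lambda=0$ and $\mu=\Lambda\otimes1-1\otimes\Lambda$ on $V(1)$, which is exactly the assertion.

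For part (ii), the key step is a clean evaluation of $X^2$ and $Z^2$. Since $X=\Delta(E\rho)$ and $\Delta$ is an algebra homomorphism, I would first use \eqref{UZ/2Z:rhoE} and \eqref{UZ/2Z:rho^2} to compute $(E\rho)^2=EF$, so that $X^2=\Delta(EF)=EF\otimes1+E\otimes F+F\otimes E+1\otimes EF$. For $Z=(E\otimes1-1\otimes E)\Delta(\rho)$, the same relations give $\Delta(\rho)(E\otimes1)\Delta(\rho)=F\otimes1$ and $\Delta(\rho)(1\otimes E)\Delta(\rho)=1\otimes F$, whence $Z^2=(E\otimes1-1\otimes E)(F\otimes1-1\otimes F)=EF\otimes1-E\otimes F-F\otimes E+1\otimes EF$. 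Adding these cancels the cross terms and leaves $X^2+Z^2=2(EF\otimes1+1\otimes EF)$.

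It then remains to combine this with $Y^2=\frac{1}{4}(H\otimes1-1\otimes H)^2$ and to express the total in terms of the Casimir element. Using $2EF=\Lambda+H-\frac{H^2}{2}$, which follows from \eqref{[EF]} and \eqref{Lambda}, I would rewrite $X^2+Z^2$ as $\Lambda\otimes1+1\otimes\Lambda$ plus terms quadratic in $H\otimes1$ and $1\otimes H$; collecting these against $Y^2$ should yield the global identity
\begin{gather*}
X^2+Y^2+Z^2=\Lambda\otimes1+1\otimes\Lambda+\Delta(H)-\frac{1}{4}\Delta(H)^2
\end{gather*}
in $\U_{\Z/2\Z}^{\otimes2}$. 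Restricting to $V(1)$, where $\Delta(H)$ acts as the identity by \eqref{V(theta)}, the last two terms collapse to $1-\frac{1}{4}=\frac{3}{4}$, giving the claim. The only delicate point is this last bookkeeping: one must verify that the quadratic-in-$H$ contributions assemble precisely into $-\frac{1}{4}\Delta(H)^2$, so that the defining constraint $\Delta(H)=1$ can be applied; everything else is routine manipulation.
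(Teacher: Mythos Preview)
Your proposal is correct and follows essentially the same approach as the paper: part (i) is deduced directly from the relations \eqref{V(1):BI-1}--\eqref{V(1):BI-3} established in the proof of Theorem~\ref{thm:V(1)}, and part (ii) is obtained by computing $X^2,Y^2,Z^2$ as elements of $\U_{\Z/2\Z}^{\otimes 2}$, rewriting via $2EF=\Lambda+H-\tfrac{H^2}{2}$, and collapsing the $H$-terms into $\Delta(H)-\tfrac14\Delta(H)^2$ before restricting to $V(1)$. Your shortcut $(E\rho)^2=EF$ followed by applying $\Delta$ is a slightly tidier way of obtaining $X^2$ than the paper's direct expansion, but the two arguments are otherwise identical.
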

\begin{proof}
(i): The statement was shown in the proof of Theorem \ref{thm:V(1)}.

(ii): 
Recall the actions of $X,Y,Z$ on the $\BI$-module $V(1)$ from \eqref{X:V(1)}--\eqref{Z:V(1)}.  By the formula for the sum of squares we have 
\begin{align*}
Y^2 &=\frac{H^2\otimes 1+1\otimes H^2}{4}
-\frac{H\otimes H}{2}.
\end{align*}
Using (\ref{DeltaE}), (\ref{DeltaF}), (\ref{UZ/2Z:rhoE}) and (\ref{UZ/2Z:rho^2}) a direct calculation shows that 
\begin{align*}
X^2&=
EF\otimes 1+E\otimes F+F\otimes E+1\otimes EF,
\\
Z^2&=EF\otimes 1-E\otimes F-F\otimes E+1\otimes EF.
\end{align*}
Summing these identities, the Casimir element $X^2+Y^2+Z^2$ of $\BI$ acts on $V(1)$ as 
\begin{gather}
\label{X^2+Y^2+Z^2:V(1)}
\left(
2EF+\frac{H^2}{4}
\right)\otimes 1
+
1\otimes \left(
2EF+\frac{H^2}{4}
\right)
-\frac{H\otimes H}{2}.
\end{gather}

By \eqref{[EF]} and \eqref{Lambda} the element $2EF+\frac{H^2}{4}$ can be written as $\Lambda+H-\frac{H^2}{4}$. Substituting this into \eqref{X^2+Y^2+Z^2:V(1)} and rewriting in terms of $\Delta(H)$ yields that \eqref{X^2+Y^2+Z^2:V(1)} is equal to $\Lambda\otimes 1+1\otimes \Lambda$ plus 
$$
\Delta(H) \left( 1-\frac{\Delta(H)}4 \right).
$$
Since $\Delta(H)$ acts as $1$ on $V(1)$ by \eqref{V(theta)}, the statement (ii) follows. 
\end{proof}

\section{Finite-dimensional $\U_{\Z/2\Z}$-modules} 
\label{s:UZ/2Zmodule}

We begin this section with some standard facts on finite-dimensional $\U$-modules.
For each $n\in \N$ there exists an $(n+1)$-dimensional $\U$-module $L_n$ with basis $\{ v_i^{(n)} \}_{i=0}^n$ satisfying 
\begin{align}
E v_i^{(n)}&=i v_{i-1}^{(n)}  \qquad (1\leq i\leq n),
 \qquad Ev_0^{(n)}=0, 
\label{Ln:E}
\\
F v_i^{(n)}&=(n-i)  v_{i+1}^{(n)} \qquad  (0\leq i\leq n-1),
\qquad F v_n^{(n)}=0,
\label{Ln:F}
\\
H v_i^{(n)}&=(n-2i) v_i^{(n)} \qquad  (0\leq i\leq n).
\label{Ln:H}
\end{align}
The Casimir element $\Lambda$ acts on $L_n$ as scalar multiplication by $\frac{n(n+2)}{2}$. 
The $\U$-module $L_n$ is irreducible.

\begin{lem} 
\label{lem:Ln} 
For each $n\in \N$ every $(n+1)$-dimensional irreducible $\U$-module is isomorphic to $L_{n}$.
\end{lem}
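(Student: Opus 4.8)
The plan is to show that any $(n+1)$-dimensional irreducible $\U$-module $V$ is isomorphic to the standard module $L_n$ of \eqref{Ln:E}--\eqref{Ln:H} by exhibiting inside $V$ a basis on which $E,F,H$ act by exactly the formulas defining $L_n$. The key structural fact I would exploit is that $H$ acts semisimply with integer eigenvalues, and that $E$ and $F$ shift the $H$-eigenspaces by $+2$ and $-2$ respectively; this is immediate from the relations \eqref{[HE]} and \eqref{[HF]}, which give $H(Ev)=(\theta+2)(Ev)$ whenever $Hv=\theta v$, and symmetrically for $F$. These are the standard $\mathfrak{sl}_2$ representation-theoretic facts, so the argument is the classical one.

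First I would establish that $H$ has an eigenvector on $V$: working over $\C$ with $V$ finite-dimensional, $H$ has some eigenvalue, and by the weight-shifting property the nonzero eigenvalues cannot increase indefinitely (as $V$ is finite-dimensional, the eigenvalues are bounded), so there is a \emph{highest weight} vector $w\neq 0$ with $Ew=0$ and $Hw=\lambda w$ for some $\lambda\in\C$. Next I would form the vectors $w, Fw, F^2w,\dots$; using \eqref{[HE]}--\eqref{[EF]} one derives by induction the standard commutation identity $EF^{k}w = k(\lambda-k+1)F^{k-1}w$, which shows that the span of the $F^k w$ is stable under $E,F,H$, hence is a nonzero submodule. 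By irreducibility this span is all of $V$, and since $\dim V=n+1$ the first $n+1$ of these vectors are a basis while $F^{n+1}w=0$; feeding $F^{n+1}w=0$ into the identity for $EF^{k}w$ forces $\lambda=n$. Rescaling $w$ and the $F^k w$ appropriately then matches the action of $E,F,H$ with \eqref{Ln:E}--\eqref{Ln:H}, giving the desired isomorphism $V\cong L_n$.

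The main obstacle, and the step needing the most care, is pinning down the highest weight and the dimension count: I must verify that the integrality $\lambda=n$ really follows from finite-dimensionality together with $F^{n+1}w=0$, rather than merely assuming it, and that the chain $\{F^k w\}_{k=0}^{n}$ is linearly independent (which follows because they lie in distinct $H$-eigenspaces for weights $\lambda,\lambda-2,\dots$). Once the weights are shown distinct, independence is automatic, and the rescaling to normalize the coefficients in \eqref{Ln:E}--\eqref{Ln:F} is routine bookkeeping. I would also remark that the Casimir element $\Lambda$ of \eqref{Lambda} acts as the scalar $\tfrac{n(n+2)}{2}$, consistent with the stated value, which provides an independent check that the highest weight is $n$.
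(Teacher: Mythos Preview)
Your proposal is correct and is precisely the classical highest-weight argument for finite-dimensional irreducible $\mathfrak{sl}_2$-modules; the paper itself does not write out a proof but simply cites \cite[Section~V.4]{kassel}, which contains exactly the argument you sketch. So your approach matches the paper's (deferred) proof.
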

\begin{proof}
See \cite[Section V.4]{kassel} for example.
\end{proof}

Recall that a module over an algebra is called {\it completely reducible} if it is equal to a (direct) sum of irreducible submodules \cite[Section XVII.2]{LangAlgebra}.

\begin{lem}
\label{lem:Umodule_semisimple} 
Every finite-dimensional $\U$-module is completely reducible.
\end{lem}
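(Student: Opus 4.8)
The plan is to prove Weyl's complete reducibility theorem for $\U=U(\mathfrak{sl}_2)$ using the Casimir element $\Lambda$ of \eqref{Lambda}, exploiting the fact that $\Lambda$ acts as scalar multiplication by the \emph{distinct} values $\frac{n(n+2)}{2}$ on the irreducible modules $L_n$. Complete reducibility is equivalent to the assertion that every submodule $W$ of a finite-dimensional $\U$-module $V$ admits a $\U$-invariant complement, so I would reduce the lemma to producing such complements and proceed by two successive reductions.

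First I would carry out the standard Hom-space reduction to the codimension-one case. Given $W\subseteq V$, I endow $\mathrm{Hom}_\C(V,W)$ with the $\U$-module structure $(u\cdot f)(v)=uf(v)-f(uv)$ for $u\in\mathfrak{sl}_2$, and consider the submodules $\mathcal V=\{f:f|_W\ \text{is a scalar}\}$ and $\mathcal W=\{f:f|_W=0\}$. The quotient $\mathcal V/\mathcal W$ is the one-dimensional trivial $\U$-module $\C$, and a $\U$-invariant element of $\mathcal V$ lying over a nonzero class in $\mathcal V/\mathcal W$ is precisely a $\U$-module projection $V\to W$, whose kernel is the sought complement. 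Hence it suffices to split every short exact sequence $0\to W\to V\to\C\to 0$ with one-dimensional trivial quotient.

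Next I would reduce to the case that $W$ is irreducible, by induction on $\dim W$: if $W$ has a proper nonzero submodule $W'$, I first split the induced codimension-one extension $0\to W/W'\to V/W'\to\C\to 0$ by the inductive hypothesis, pull the resulting complement back to an intermediate submodule $U\subseteq V$ containing $W'$ with $U/W'\cong\C$, and then split $0\to W'\to U\to\C\to 0$ again by induction; a short dimension-and-intersection check shows the resulting line complements $W$ in $V$. The base case is then handled by $\Lambda$. If $W\cong L_0$ is trivial, the action of $\mathfrak{sl}_2$ on the two-dimensional space $V$ lands in the strictly upper-triangular $2\times 2$ matrices, which form an abelian algebra; since $\mathfrak{sl}_2=[\mathfrak{sl}_2,\mathfrak{sl}_2]$ by \eqref{[HE]}--\eqref{[EF]}, the action is trivial and any complementary line works. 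If $W\cong L_n$ with $n\geq 1$, then $\Lambda$ acts on $W$ by the nonzero scalar $c=\frac{n(n+2)}{2}$ and by $0$ on $V/W$; centrality of $\Lambda$ (Lemma \ref{lem:Lambda}) gives $\Lambda(V)\subseteq W$ and hence $\Lambda(\Lambda-c)=0$ on $V$, so $\Lambda$ is diagonalizable with $c$-eigenspace exactly $W$ and one-dimensional kernel $\ker\Lambda$, which is the desired $\U$-submodule complement.

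I expect the main obstacle to be the Hom-space reduction: checking that $\mathcal V$ and $\mathcal W$ are genuine $\U$-submodules and that invariant lifts correspond to module projections takes some care, and the entire argument is arranged precisely so that the Casimir element need only be invoked in the clean irreducible base case. The eigenvalue separation furnished by Lemma \ref{lem:Ln} together with the known scalar action of $\Lambda$ on $L_n$ makes that final step routine.
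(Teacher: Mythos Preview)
Your argument is correct and is the standard Casimir-element proof of Weyl's theorem specialized to $\mathfrak{sl}_2$. The paper does not give an independent proof at all: its entire proof of Lemma~\ref{lem:Umodule_semisimple} is the one-line citation ``See \cite[Theorem~V.4.6]{kassel} for example.'' What you have written is essentially the content behind that citation, so there is no genuinely different route to compare --- you have simply unpacked what the paper outsources. One cosmetic point: in your inductive step you name an intermediate submodule $U$, which clashes with the paper's standing notation $\U=U(\mathfrak{sl}_2)$; pick a different letter. Also, when you invoke centrality of $\Lambda$ you may cite the sentence preceding \eqref{Lambda} directly rather than Lemma~\ref{lem:Lambda}, since the latter concerns $\U_{\Z/2\Z}$ (though of course it implies centrality in $\U$).
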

\begin{proof}
See \cite[Theorem V.4.6]{kassel}  for example.
\end{proof}

Applying Theorem \ref{thm:UZ/2Z_presentation}, the $\U$-module $L_n$ ($n\in \N$) can be extended to a $\U_{\Z/2\Z}$-module by defining
\begin{gather}
\label{L+&L-:rho}
\rho\, v_i^{(n)} =\pm v_{n-i}^{(n)}
\qquad (0\leq i\leq n).
\end{gather}
The choice of sign gives the two distinct $\U_{\Z/2\Z}$-modules, denoted by $L_n^+$ and $L_n^-$, respectively.

\begin{lem}
\label{lem:Ln+&Ln-}
For any $n\in \N$ the following statements hold:
\begin{enumerate}
\item 
The $\U_{\Z/2\Z}$-modules $L_n^+$ and $L_n^{-}$ are irreducible.

\item The $\U_{\Z/2\Z}$-modules $L_n^+$ and $L_n^{-}$ are not isomorphic.
\end{enumerate}
\end{lem}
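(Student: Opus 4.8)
The plan is to prove the two statements separately, both by exploiting the explicit action of the generators on the basis $\{v_i^{(n)}\}_{i=0}^n$ together with the structural facts already at hand.

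For statement (i), I would first observe that $L_n^{\pm}$ is already irreducible as a $\U$-module by Lemma~\ref{lem:Ln}, since $L_n$ is irreducible and the $\U$-action on $L_n^{\pm}$ is unchanged. Irreducibility over the smaller algebra $\U$ immediately forces irreducibility over the larger algebra $\U_{\Z/2\Z}\supseteq \U$: any nonzero $\U_{\Z/2\Z}$-submodule is in particular a nonzero $\U$-submodule, hence all of $L_n^{\pm}$. This reduces (i) to the already-established irreducibility of $L_n$, so essentially no new work is required beyond recording the inclusion $\U\subseteq \U_{\Z/2\Z}$.

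For statement (ii), the task is to produce a $\U_{\Z/2\Z}$-module invariant that distinguishes $L_n^+$ from $L_n^-$. My first candidate is to compare the action of $\rho$ on a canonically determined vector. A clean choice is the weight-space structure: by \eqref{Ln:H} the vector $v_i^{(n)}$ has $H$-eigenvalue $n-2i$, so any $\U_{\Z/2\Z}$-module isomorphism must respect $H$-eigenspaces and hence send the eigenbasis of $L_n^+$ to scalar multiples of the corresponding eigenbasis of $L_n^-$. I would then track how $\rho$ acts: by \eqref{L+&L-:rho}, $\rho v_i^{(n)} = +v_{n-i}^{(n)}$ in $L_n^+$ but $\rho v_i^{(n)} = -v_{n-i}^{(n)}$ in $L_n^-$. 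An isomorphism $\phi$ commuting with both $H$ and $\rho$ would force a sign contradiction: writing $\phi(v_i^{(n)}) = c_i v_i^{(n)}$ (with the $c_i$ nonzero, determined up to the freedom allowed by the $\U$-action via \eqref{Ln:E}--\eqref{Ln:F}), the two intertwining conditions $\phi\rho = \rho\phi$ read $c_i = c_{n-i}$ in one module and $c_i = -c_{n-i}$ in the other, which cannot hold simultaneously for nonzero scalars.

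The step I expect to be the main obstacle is making precise that an isomorphism must act as a scalar on each weight line, so that the sign comparison is unambiguous. When the weight spaces of $L_n$ are one-dimensional (which holds here, since the eigenvalues $n-2i$ are distinct as $i$ ranges over $0,\dots,n$), this is automatic. If one prefers a coordinate-free formulation, an alternative is to use the central element $\rho$-twisted trace or, more simply, to note that $\rho$ acts on the highest-weight line $\C v_0^{(n)}$ via $v_0^{(n)}\mapsto \pm v_n^{(n)}$ and then use the $\U$-action to pin down the induced sign on $\C v_0^{(n)}$ after applying $E^n$ or $F^n$; the resulting scalar is a genuine isomorphism invariant and differs between $L_n^+$ and $L_n^-$. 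Either route converts the visible sign in \eqref{L+&L-:rho} into an invariant, completing the proof; I would carry out the weight-space version since the one-dimensionality of the weight spaces is transparent from \eqref{Ln:H}.
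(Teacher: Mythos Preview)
Your proof is correct and for (i) identical to the paper's. For (ii) the paper takes a slightly shorter route: any $\U_{\Z/2\Z}$-isomorphism $f:L_n^+\to L_n^-$ is in particular a $\U$-automorphism of the irreducible module $L_n$, so Schur's lemma forces $f$ to be a nonzero scalar multiple of the identity; but a scalar cannot intertwine $\rho v_i^{(n)}=+v_{n-i}^{(n)}$ with $\rho v_i^{(n)}=-v_{n-i}^{(n)}$. Your weight-space computation is Schur's lemma unpacked by hand: the one-dimensionality of each $H$-eigenspace gives $\phi(v_i^{(n)})=c_i v_i^{(n)}$, and compatibility with $E$ (or $F$) via \eqref{Ln:E}--\eqref{Ln:F} forces all $c_i$ equal. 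One phrasing issue: there are not ``two intertwining conditions $\phi\rho=\rho\phi$'' giving $c_i=c_{n-i}$ and $c_i=-c_{n-i}$ separately. For a map $L_n^+\to L_n^-$ the single $\rho$-intertwining condition reads $c_{n-i}=-c_i$; the relation $c_i=c_{n-i}$ you need to contradict it comes from the $\U$-action (i.e.\ from Schur), not from $\rho$. With that clarification your argument and the paper's coincide.
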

\begin{proof}
(i): By construction, the restriction of the $\U_{\Z/2\Z}$-modules $L_n^\pm$ to $\U$ is the irreducible $\U$-module $L_n$. Hence (i) follows.

(ii): Suppose, for contradiction, that there exists a $\U_{\Z/2\Z}$-module isomorphism $f: L_n^+ \to L_n^-$. Then $f$ is a $\U$-module automorphism of $L_n$. By Schur's lemma $f$ must be a nonzero scalar multiple of the identity map on $L_n$.
However, by \eqref{L+&L-:rho} any nonzero scalar multiple of the identity cannot commute with $\rho$, a contradiction.
 \end{proof}

\begin{lem}
\label{lem1:W&rhoW}
Suppose that $V$ is a $\U_{\Z/2\Z}$-module and $W$ is a $\U$-submodule of $V$. 
Then the following statements hold:
\begin{enumerate}
\item $\rho W$ is a $\U$-submodule of $V$.

\item  $W+\rho W$ is a $\U_{\Z/2\Z}$-submodule of $V$.
\end{enumerate}
\end{lem}
\begin{proof}
(i): Using \eqref{UZ/2Z:Hrho}--\eqref{UZ/2Z:rho^2} it is routine to verify that $\rho W$ is invariant under $E,F,H$. Since the algebra $\U$ is generated by these elements, the statement (i) follows.

(ii): By (i) the subspace $W+\rho W$ of $V$ is a $\U$-module.
In view of (\ref{UZ/2Z:rho^2}) the $\U$-module $W+\rho W$ is also invariant under $\rho$. The statement (ii) follows.
\end{proof}

\begin{lem}
\label{lem2:W&rhoW}
Let $V$ denote a $\U_{\Z/2\Z}$-module. Suppose that $W$ is a $\U$-submodule of $V$ isomorphic to $L_n$ for some $n\in \N$. 
Then  the following statements are true:
\begin{enumerate}
\item The $\U$-module $\rho W$ is isomorphic to $L_n$.

\item Suppose that $W=\rho W$. Then
the $\U_{\Z/2\Z}$-module $W$ is isomorphic to either $L_n^+$ or $L_n^-$.

\item Suppose that $W\not= \rho W$.
Then the $\U_{\Z/2\Z}$-module $W+ \rho W$ is isomorphic to $L_n^+\oplus L_n^-$. 

\end{enumerate}
\end{lem}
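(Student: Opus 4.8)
The key observation is that the relations $\rho E=F\rho$, $\rho F=E\rho$, $\rho H=-H\rho$ (all immediate from \eqref{UZ/2Z:Hrho}--\eqref{UZ/2Z:rho^2}) let me conjugate the standard action \eqref{Ln:E}--\eqref{Ln:H} by $\rho$, at the cost of reversing the index $i\mapsto n-i$. Concretely, I would fix a $\U$-module isomorphism $f:L_n\to W$, put $v_i=f(v_i^{(n)})$ so that $\{v_i\}_{i=0}^n$ satisfies \eqref{Ln:E}--\eqref{Ln:H}, and set $v_i'=\rho v_{n-i}$. A direct check using the three relations above together with $\rho^2=1$ shows that $\{v_i'\}_{i=0}^n$ again satisfies \eqref{Ln:E}--\eqref{Ln:H}; the index reversal is precisely what absorbs the swap $E\leftrightarrow F$ and the sign on $H$. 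Since $\{v_i'\}$ is a basis of $\rho W$ obeying the defining relations of $L_n$, this already proves (i).

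For (ii) I would assume $W=\rho W$, so that each $v_i'=\rho v_{n-i}$ lies in $W$. By the computation above, $v_i\mapsto v_i'$ defines a $\U$-module endomorphism of $W\cong L_n$, which by Schur's lemma is a scalar $c$. Thus $\rho v_{n-i}=c\,v_i$ for all $i$, i.e.\ $\rho v_i=c\,v_{n-i}$, and applying $\rho$ once more together with $\rho^2=1$ forces $c^2=1$, hence $c=\pm 1$. Comparing with \eqref{L+&L-:rho}, the map $f$ is then a $\U_{\Z/2\Z}$-isomorphism of $W$ onto $L_n^+$ when $c=1$ and onto $L_n^-$ when $c=-1$.

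For (iii), since $W$ and $\rho W$ are irreducible $\U$-modules by (i), the intersection $W\cap\rho W$ is $0$ or $W$; the hypothesis $W\neq\rho W$ excludes the latter, so $W+\rho W=W\oplus\rho W$ has dimension $2(n+1)$ and is a $\U_{\Z/2\Z}$-module by Lemma \ref{lem1:W&rhoW}(ii). Writing $\tilde v_i=v_i'=\rho v_{n-i}$ for the standard basis of $\rho W$, one has $\rho v_i=\tilde v_{n-i}$ and $\rho\tilde v_i=v_{n-i}$. I would then pass to the symmetrized vectors $s_i=v_i+\tilde v_i$ and $t_i=v_i-\tilde v_i$: both families satisfy \eqref{Ln:E}--\eqref{Ln:H} since $\{v_i\}$ and $\{\tilde v_i\}$ do, while $\rho s_i=s_{n-i}$ and $\rho t_i=-t_{n-i}$. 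Hence $\Span\{s_i\}$ and $\Span\{t_i\}$ are $\U_{\Z/2\Z}$-submodules isomorphic to $L_n^+$ and $L_n^-$ by \eqref{L+&L-:rho}, and they span $W+\rho W$, giving $W+\rho W\cong L_n^+\oplus L_n^-$.

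The calculations are elementary; the only real care needed is the bookkeeping of the index reversal $i\mapsto n-i$, which arises because $\rho$ interchanges the raising and lowering operators and negates the weight, thereby sending highest-weight vectors to lowest-weight ones. The one genuinely structural step is the Schur-lemma argument in (ii): I must first confirm that the $\rho$-twisted, index-reversed vectors satisfy the \emph{unmodified} relations \eqref{Ln:E}--\eqref{Ln:H} before the scalar $c$ can be extracted, and it is the constraint $c=\pm 1$ that produces exactly the dichotomy $L_n^+$ versus $L_n^-$ and, in (iii), the clean $\pm$ behavior of $\rho$ on the symmetrized bases.
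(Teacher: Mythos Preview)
Your proof is correct and follows essentially the same approach as the paper: the paper packages your $v_i'=\rho v_{n-i}$ into a map $g:L_n\to\rho W$, applies Schur's lemma to $f^{-1}\circ g$ for (ii), and in (iii) defines $h^\pm=f\pm g$, whose images are precisely your spans $\Span\{s_i\}$ and $\Span\{t_i\}$. The only cosmetic difference is that the paper works with the homomorphisms $f,g,h^\pm$ rather than directly with basis vectors, but the underlying computations are identical.
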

\begin{proof}
Since the $\U$-module $W$ is isomorphic to $L_n$, there is a $\U$-module isomorphism $f:L_n\to W$. Define a linear map $g:L_n\to \rho W$ by
\begin{eqnarray}\label{g:Ln->rhoW}
v_i^{(n)} 
&\mapsto & \rho f (v_{n-i}^{(n)})
\qquad (0\leq i\leq n).
\end{eqnarray}
(i): By Lemma \ref{lem1:W&rhoW}(i) the space $\rho W$ is a $\U$-submodule of $V$. Since $f$ is a $\U$-module homomorphism, it is routine to verify that $g$ is also a $\U$-module homomorphism by using \eqref{UZ/2Z:Hrho}, \eqref{UZ/2Z:rhoE} and \eqref{Ln:E}--\eqref{Ln:H}.  
Since $f$ is a linear isomorphism and $\rho$ is invertible in $\U_{\mathbb{Z}/2\mathbb{Z}}$ by (\ref{UZ/2Z:rho^2}), the vectors $\{\rho f(v_{n-i}^{(n)})\}_{i=0}^n$ form a basis of $\rho W$. Therefore $g$ is a $\U$-module isomorphism. The statement (i) follows.

(ii): Since $W=\rho W$ the composition $f^{-1}\circ g$ is a $\U$-module automorphism of $L_n$.  
By Schur's lemma there exists a nonzero scalar $\e\in\C$ such that $f^{-1}\circ g=\e\cdot 1$. 
Consequently $g=\e\cdot f$. Evaluating both sides at $v_{n-i}^{(n)}$  and applying (\ref{g:Ln->rhoW}) gives
\begin{align}\label{e1:W&rhoW}
\rho f(v_i^{(n)})=\e f(v_{n-i}^{(n)})
\qquad
(0\leq i\leq n).
\end{align}

Applying $\rho$ to both sides of (\ref{e1:W&rhoW}), the right-hand side is evaluated using (\ref{e1:W&rhoW}) and the left-hand side is evaluated by (\ref{UZ/2Z:rho^2}). It follows that
$$
f(v_i^{(n)})=\e^2 f(v_i^{(n)})
\qquad
(0\leq i\leq n).
$$
Since the vectors $\{f(v_i^{(n)})\}_{i=0}^n$ are nonzero, it follows that $\e\in\{\pm1\}$. Comparing (\ref{L+&L-:rho}) with  (\ref{e1:W&rhoW}) implies that if $\e=1$ then $f$ is a $\U_{\Z/2\Z}$-module isomorphism from $L_n^+$ to $W$; if $\e=-1$ then $f$ is a $\U_{\Z/2\Z}$-module isomorphism from $L_n^-$ to $W$. 
The statement (ii) follows.

(iii): 
By Lemma \ref{lem1:W&rhoW}(ii) the space $W+ \rho W$ is a $\U_{\Z/2\Z}$-submodule of $V$. 
Define a map $h^+:L_n^+\to  W+ \rho W$ by 
\begin{align*}
h^+(v)=f(v)+g(v) \qquad 
\hbox{for all $v\in L_n^+$}.
\end{align*}
Define a map $h^-:L_n^-\to W+\rho W$ by 
\begin{align*}
h^-(v)=f(v)-g(v) \qquad 
\hbox{for all $v\in L_n^-$}.
\end{align*}
Since $f$ and $g$ are $\U$-module homomorphisms, the maps $h^+$ and $h^-$ are also $\U$-module homomorphisms.
Using (\ref{UZ/2Z:rho^2}), (\ref{L+&L-:rho}) and (\ref{g:Ln->rhoW}) yields that 
\begin{align*}
\rho
\left(h^+(v_i^{(n)})
\right) = h^+
\left(\rho\, v_i^{(n)}
\right) 
\qquad (0\leq i\leq n),
\\
\rho\left(h^-(v_i^{(n)})
\right) = h^-
\left(\rho\, v_i^{(n)}\right)
\qquad (0\leq i\leq n).
\end{align*}
Since the vectors $\{v_i^{(n)}\}_{i=0}^n$ span $L_n^+$ and $L_n^-$, it follows that $h^+$ and $h^-$ are $\U_{\Z/2\Z}$-module homomorphisms.

Since $f:L_n\to W$ and $g:L_n\to \rho W$ are $\U$-module isomorphisms, the $\U$-modules $W$ and $\rho W$ are irreducible. Since $W\not=\rho W$ by assumption, it follows that $W\cap \rho W=\{0\}$. 
Combined with the injectivity of $f$ and $g$, the kernels of $h^+$ and $h^-$ are trivial.
Thus the images of $h^+$ and $h^-$ are two $\U_{\Z/2\Z}$-submodules of $W+\rho W$, isomorphic to $L_n^+$ and $L_n^-$, respectively. Finally, by Lemma \ref{lem:Ln+&Ln-}(ii) and since $\dim (W+\rho W)$ and $\dim L_n^+ +\dim L_n^-$ are equal to $2(n+1)$, the statement (iii) follows.
\end{proof}

We are now in a position to provide a complete classification of finite-dimensional irreducible $\U_{\Z/2\Z}$-modules and establish the complete reducibility of finite-dimensional $\U_{\Z/2\Z}$-modules.

\begin{thm} 
\label{thm:UZ/2Zmodule_classification}
For any $n\in \N$ every $(n+1)$-dimensional irreducible $\U_{\Z/2\Z}$-module is isomorphic to either $L_{n}^+$ or $L_{n}^-$.
\end{thm}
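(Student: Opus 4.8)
The plan is to leverage the complete reducibility of the restricted $\U$-module structure together with the structural dichotomy established in Lemma \ref{lem2:W&rhoW}. Let $V$ denote an $(n+1)$-dimensional irreducible $\U_{\Z/2\Z}$-module. First I would restrict $V$ to a $\U$-module; by Lemma \ref{lem:Umodule_semisimple} this restriction is completely reducible, so $V$ contains at least one irreducible $\U$-submodule $W$, which by Lemma \ref{lem:Ln} is isomorphic to $L_m$ for some $m\in\N$.

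Next, by Lemma \ref{lem1:W&rhoW}(ii) the subspace $W+\rho W$ is a $\U_{\Z/2\Z}$-submodule of $V$. Since $V$ is irreducible and $W+\rho W$ is nonzero, it follows that $W+\rho W=V$. This identification is the pivot of the argument: it transfers the problem entirely into the setting of Lemma \ref{lem2:W&rhoW}.

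Now I would split into the two cases of Lemma \ref{lem2:W&rhoW}. If $W=\rho W$, then $V=W$ is, by Lemma \ref{lem2:W&rhoW}(ii), isomorphic to $L_m^+$ or $L_m^-$; comparing dimensions via $\dim L_m^{\pm}=m+1=\dim V=n+1$ forces $m=n$, which yields the claim. If instead $W\neq\rho W$, then by Lemma \ref{lem2:W&rhoW}(iii) the module $V=W+\rho W$ is isomorphic to $L_m^+\oplus L_m^-$, a direct sum of two nonzero $\U_{\Z/2\Z}$-submodules and hence reducible, contradicting the irreducibility of $V$. Therefore this case cannot occur, and only the case $W=\rho W$ survives.

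The argument is essentially a bookkeeping of the preceding lemmas rather than a fresh computation; the only genuine observations are that irreducibility of $V$ simultaneously forces $W+\rho W=V$ and rules out the decomposable case $W\neq\rho W$. Consequently I do not anticipate a substantive obstacle: the lone point requiring care is the dimension comparison $\dim L_m^{\pm}=m+1$ used to pin down $m=n$ in the surviving case.
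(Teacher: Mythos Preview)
Your proposal is correct and follows essentially the same route as the paper's proof: pick an irreducible $\U$-submodule $W\cong L_m$, invoke Lemma~\ref{lem2:W&rhoW} to rule out $W\neq\rho W$ by reducibility of $L_m^+\oplus L_m^-$, and in the surviving case $W=\rho W$ conclude $V=W\cong L_m^{\pm}$ with $m=n$ by dimension. The only cosmetic differences are that you first pin down $W+\rho W=V$ before splitting into cases (the paper does this implicitly), and you cite Lemma~\ref{lem:Umodule_semisimple} to obtain $W$, which is harmless but unnecessary since any nonzero finite-dimensional module contains an irreducible submodule.
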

\begin{proof}
Let $n\in \N$ be given. Suppose that $V$ is an $(n+1)$-dimensional irreducible $\U_{\Z/2\Z}$-module. 
Then $V$ contains a finite-dimensional irreducible $\U$-submodule $W$. By Lemma \ref{lem:Ln} the $\U$-module $W$ is isomorphic to $L_m$ for some $m\in \N$. 

Suppose that $W\not= \rho W$. 
By Lemma \ref{lem2:W&rhoW}(iii) the space $W+ \rho W$ is a $\U_{\Z/2\Z}$-submodule of $V$ isomorphic to $L_m^+\oplus L_m^-$. This contradicts the irreducibility of $V$ as a $\U_{\Z/2\Z}$-module. Hence $W=\rho W$. 
By Lemma \ref{lem2:W&rhoW}(ii) the space $W$ is a $\U_{\Z/2\Z}$-submodule of $V$ isomorphic to either $L_m^+$ or $L_m^-$. Since the $\U_{\Z/2\Z}$-module $V$ is irreducible, it follows that $V=W$ and $m=n$. The result follows.
\end{proof}

\begin{thm}\label{thm:UZ/2Zmodule_semisimple}
Every finite-dimensional $\U_{\Z/2\Z}$-module is completely reducible.
\end{thm}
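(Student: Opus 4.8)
The plan is to prove the stronger statement that every $\U_{\Z/2\Z}$-submodule of a finite-dimensional $\U_{\Z/2\Z}$-module $V$ is a direct summand, and then to deduce complete reducibility by induction on $\dim V$. The splitting will be produced by a Maschke-style averaging over the group $\Z/2\Z=\{1,\rho\}$, which is available precisely because $2$ is invertible in $\C$ and $\rho^2=1$ by \eqref{UZ/2Z:rho^2}.

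For the averaging step, let $U$ be a $\U_{\Z/2\Z}$-submodule of $V$ and let $R$ denote the action of $\rho$ on $V$, so that $R^2=1$. Since $V$ is completely reducible as a $\U$-module by Lemma \ref{lem:Umodule_semisimple}, the $\U$-submodule $U$ is a $\U$-direct summand, so there is a $\U$-linear projection $\pi\colon V\to V$ with image $U$ and $\pi|_U=1$. I would then set
$$
\widetilde\pi=\tfrac12\bigl(\pi+R\,\pi\,R\bigr).
$$
Because $U$ is $\rho$-invariant, $\widetilde\pi$ again has image $U$ and restricts to the identity on $U$, hence is an idempotent with image $U$; its kernel is then the desired complement, provided $\widetilde\pi$ is $\U_{\Z/2\Z}$-linear. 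That $\widetilde\pi$ commutes with $R$ is immediate from $R^2=1$. That $\widetilde\pi$ commutes with the action of $\U$ reduces, since $\pi$ is already $\U$-linear, to checking that $R\,\pi\,R$ is $\U$-linear; this is where the skew relation $Ru=\rho(u)R$ for $u\in\U$ (with $\rho(u)$ the image of $u$ under the automorphism) together with $\rho^2=1$ is used to move $\U$ past the two copies of $R$.

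The main obstacle is precisely this bookkeeping: one must track how conjugation by $\rho$ interacts with the $\U$-action, and the key point is that conjugation by $R$ carries a $\U$-linear map to a $\U$-linear map because $\rho$ is an involutive automorphism of $\U$ (equivalently, relations \eqref{UZ/2Z:Hrho}--\eqref{UZ/2Z:rho^2} applied to the generators $E,F,H$). The invertibility of $2$ is also essential and is where the base field $\C$ enters. Once the splitting is established, I would finish by induction on $\dim V$: a nonzero $V$ contains an irreducible $\U_{\Z/2\Z}$-submodule $S$, namely a nonzero submodule of least dimension; the splitting gives $V=S\oplus T$ with $T$ a $\U_{\Z/2\Z}$-submodule of smaller dimension; and the inductive hypothesis writes $T$ as a sum of irreducible submodules, whence so is $V$.
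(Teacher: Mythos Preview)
Your proof is correct but takes a genuinely different route from the paper. The paper argues directly: by Lemma~\ref{lem:Umodule_semisimple} the $\U$-module $V$ is a sum of irreducible $\U$-submodules $W$, hence $V=\sum_W(W+\rho W)$; then Lemma~\ref{lem2:W&rhoW} identifies each $W+\rho W$ as isomorphic to $L_n^+$, $L_n^-$, or $L_n^+\oplus L_n^-$, so $V$ is already written as a sum of irreducible $\U_{\Z/2\Z}$-submodules. Your argument is instead the standard Maschke averaging for skew group rings: start from a $\U$-linear projection onto a given submodule and average over $\{1,\rho\}$ to upgrade it to a $\U_{\Z/2\Z}$-linear projection, then finish by induction. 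Your approach is more general and self-contained---it uses only Lemma~\ref{lem:Umodule_semisimple} and the invertibility of $|\Z/2\Z|$, and would go through verbatim for $U(\g)_G$ with any finite $G$ of order invertible in the field---whereas the paper's proof leans on the explicit classification work in Lemma~\ref{lem2:W&rhoW} but in exchange exhibits the irreducible constituents concretely.
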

\begin{proof} 
Suppose that $V$ is a finite-dimensional $\U_{\Z/2\Z}$-module.
By Lemma \ref{lem:Umodule_semisimple} the $\U$-module $V$ is a sum of irreducible $\U$-submodules of $V$.
It follows that
\begin{align*}
V=\sum_{\hbox{\tiny irreducible $\U$-submodules $W$ of $V$}}
W+\rho W, 
\end{align*}
where the sum is over all irreducible $\U$-submodules $W$ of $V$.

Let $W$ be any irreducible $\U$-submodule of $V$. By Lemma \ref{lem:Ln} there exists an $n\in \N$ such that the $\U$-module $W$ is isomorphic to $L_n$. 
By Lemma \ref{lem2:W&rhoW} the space $W+\rho W$ is a $\U_{\Z/2\Z}$-submodule of $V$ isomorphic to $L_n^+$, $L_n^-$ or $L_n^+\oplus L_n^-$. 
Since the $\U_{\Z/2\Z}$-modules $L_n^+$ and $L_n^-$ are irreducible by Lemma \ref{lem:Ln+&Ln-}(i), 
it follows that $V$ is equal to a sum of irreducible $\U_{\Z/2\Z}$-submodules of $V$. The result follows.
\end{proof}

\section{Finite-dimensional irreducible $\BI$-modules} \label{s:BImodule}

Recall the central elements $\kappa,\lambda,\mu$ of $\BI$ defined in \eqref{BI:center1}--\eqref{BI:center3}.

\begin{prop}
[Proposition 2.6, \cite{Huang:BImodule}]
\label{prop:BImodule_odddim}
For any $a,b,c\in \C$ and any even integer $n\geq 0$, there exists an $(n+1)$-dimensional $\BI$-module $O_n(a,b,c)$ satisfying the following conditions:
\begin{enumerate}
\item  There exists a basis $\{u_i\}_{i=0}^n$ for $O_n(a,b,c)$ such that 
\begin{align*}
(X-\theta_i)u_i&=u_{i+1} \quad (0\leq i\leq n-1),
\qquad 
(X-\theta_n)u_n=0,
\\
(Y-\theta_i^*)u_i&=\varphi_i u_{i-1} \quad (1\leq i\leq n),
\qquad 
(Y-\theta_0^*)u_0=0,
\end{align*}
where
\begin{align*}
\theta_i&=
\frac{(-1)^i(2a-n+2i)}{2} 
\qquad(0\leq i\leq n),
\\
\theta^*_i&=
\frac{(-1)^i(2b-n+2i)}{2} 
\qquad(0\leq i\leq n),
\\
\varphi_i&=
\left\{
\begin{array}{ll}
\displaystyle \frac{i(n+1-2i-2a-2b-2c)}2
\quad 
&\hbox{if $i$ is even},
\\
\displaystyle \frac{(i-n-1)(n+1-2i-2a-2b+2c)}2
\quad &\hbox{if $i$ is odd}
\end{array}
\right.
\qquad(1\leq i\leq n).
\end{align*}

\item The elements $\kappa,\lambda,\mu$ act on $O_n(a,b,c)$ as scalar multiplication by
\begin{align*}
&2ab-c(n+1),
\\
&2bc-a(n+1),
\\
&2ca-b(n+1),
\end{align*}
respectively.
\end{enumerate}
\end{prop}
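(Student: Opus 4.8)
The plan is to construct $O_n(a,b,c)$ by hand. I would take the $(n+1)$-dimensional space with basis $\{u_i\}_{i=0}^n$, declare the actions of $X$ and $Y$ by the bidiagonal formulas in (i), and then \emph{define} $Z$ so that the first defining relation of $\BI$ holds identically. Concretely, I would set $Xu_i=\theta_iu_i+u_{i+1}$ (with $u_{n+1}:=0$) and $Yu_i=\theta_i^*u_i+\varphi_iu_{i-1}$ (with $u_{-1}:=0$), so that (i) holds by construction, and then put $Z:=\{X,Y\}-(2ab-c(n+1))$. With this definition the element $\kappa=\{X,Y\}-Z$ acts as the scalar $2ab-c(n+1)$, which is automatically central, so the first defining relation is trivially satisfied and the first of the three scalar assertions in (ii) holds.

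It then remains to verify the other two defining relations and to identify the scalars by which $\lambda$ and $\mu$ act. Eliminating $Z$ via $Z=\{X,Y\}-\kappa$ converts the relations $\{Y,Z\}-X=\lambda$ and $\{Z,X\}-Y=\mu$ into the two degree-three relations in $X$ and $Y$ alone,
\begin{align*}
\{X,\{X,Y\}\} &= Y+2\kappa X+\mu,\\
\{Y,\{X,Y\}\} &= X+2\kappa Y+\lambda,
\end{align*}
which are precisely the Askey--Wilson relations attached to the Bannai--Ito polynomials. Here $\kappa=2ab-c(n+1)$ is the scalar already fixed, and the claim is that these relations hold with $\lambda=2bc-a(n+1)$ and $\mu=2ca-b(n+1)$. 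Establishing these two identities simultaneously shows both that $O_n(a,b,c)$ is a genuine $\BI$-module (the two expressions are scalars, hence central) and that $\lambda,\mu$ act as stated, which completes (ii).

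The remaining work is the direct verification of the two displayed relations on each basis vector $u_i$. Since $X$ is lower bidiagonal and $Y$ is upper bidiagonal, $\{X,Y\}$ is tridiagonal, and each side of the displayed relations is a banded operator; comparing the coefficients of $u_{i-1},u_i,u_{i+1},u_{i+2}$ reduces everything to finitely many scalar identities in $a,b,c,n$ and $i$ among the data $\theta_i,\theta_i^*,\varphi_i$. The main obstacle, and the reason the even/odd bookkeeping cannot be avoided, is that $\varphi_i$ is defined by separate formulas according to the parity of $i$ while the eigenvalues $\theta_i,\theta_i^*$ carry the alternating factor $(-1)^i$; I would therefore split the verification into the cases $i$ even and $i$ odd, first check that the top-band $u_{i+2}$ terms cancel, and then match the three remaining bands. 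The hypothesis that $n$ is even is exactly what keeps the boundary terms at $i=0$ and $i=n$ consistent with these identities, so this case analysis, rather than any conceptual difficulty, is where the real content of the proposition lies.
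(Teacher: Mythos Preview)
The paper does not prove this proposition at all: it is quoted verbatim from \cite{Huang:BImodule} (as Proposition~2.6 there) and used as a black box. So there is no ``paper's own proof'' to compare with, and any correct argument you supply would necessarily go beyond what the present paper does.

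That said, your outline is the standard and correct way to establish such a result. Defining $X,Y$ by the prescribed bidiagonal matrices, setting $Z:=\{X,Y\}-(2ab-c(n+1))$, and then reducing the remaining two $\BI$-relations to the cubic identities
\[
\{X,\{X,Y\}\}=Y+2\kappa X+\mu,\qquad \{Y,\{X,Y\}\}=X+2\kappa Y+\lambda
\]
is exactly right. One small sharpening: the vanishing of the $u_{i+2}$ coefficient in the first relation (and of the $u_{i-2}$ coefficient in the second) is automatic from the three–term recurrence $\theta_i^*+2\theta_{i+1}^*+\theta_{i+2}^*=0$ (respectively $\theta_{i-2}+2\theta_{i-1}+\theta_i=0$) built into the alternating form of $\theta_i^*$ and $\theta_i$; it does not require the explicit parity split on $\varphi_i$. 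The parity split is genuinely needed only for the three remaining diagonals, and the hypothesis that $n$ is even enters through the boundary check at $i=n$ (where the formula for $\varphi_{n+1}$, were it defined, would have to vanish in the appropriate sense). With those remarks your plan goes through as a routine if tedious computation.
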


\begin{lem}\label{lem:BImodule_odddim_trace}
For any $a,b,c\in \C$ and any even integer $n\geq 0$, the traces of $X,Y,Z$ on $O_n(a,b,c)$ are equal to $a,b,c$ respectively.
\end{lem}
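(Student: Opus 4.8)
The plan is to compute the traces of $X$, $Y$, $Z$ on $O_n(a,b,c)$ by exploiting the triangular action of these operators in the basis $\{u_i\}_{i=0}^n$ described in Proposition \ref{prop:BImodule_odddim}(i). Since the trace is basis-independent, I only need the diagonal entries of each operator relative to this basis, and the key observation is that the tridiagonal-type relations immediately expose those diagonal entries.

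For $X$, the relations $(X-\theta_i)u_i=u_{i+1}$ for $0\leq i\leq n-1$ and $(X-\theta_n)u_n=0$ show that $Xu_i=\theta_i u_i+u_{i+1}$ (with the understanding that $u_{n+1}=0$). Thus the matrix of $X$ in this basis is lower triangular with diagonal entries $\theta_i$, so $\mathrm{tr}\,X=\sum_{i=0}^n \theta_i$. Similarly, for $Y$ the relations $(Y-\theta_i^*)u_i=\varphi_i u_{i-1}$ and $(Y-\theta_0^*)u_0=0$ give $Yu_i=\theta_i^* u_i+\varphi_i u_{i-1}$, so $Y$ is upper triangular with diagonal entries $\theta_i^*$, whence $\mathrm{tr}\,Y=\sum_{i=0}^n \theta_i^*$. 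The heart of the computation is then evaluating these two sums. Using the explicit formula $\theta_i=\frac{(-1)^i(2a-n+2i)}{2}$, I would split the sum over even and odd $i$; because $n$ is even, the alternating sign $(-1)^i$ causes massive cancellation, and I expect $\sum_{i=0}^n\theta_i=a$ to drop out after collecting the surviving terms. The identical structure of $\theta_i^*$ gives $\mathrm{tr}\,Y=b$ by the same cancellation.

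For $Z$, there is no direct action formula in Proposition \ref{prop:BImodule_odddim}(i), so I would instead recover $Z$ from the defining relation \eqref{BI:center1}, namely $Z=\{X,Y\}-\kappa$, where $\kappa$ acts as the scalar $2ab-c(n+1)$ by Proposition \ref{prop:BImodule_odddim}(ii). Taking traces gives $\mathrm{tr}\,Z=\mathrm{tr}(XY)+\mathrm{tr}(YX)-(2ab-c(n+1))(n+1)=2\,\mathrm{tr}(XY)-(2ab-c(n+1))(n+1)$. To compute $\mathrm{tr}(XY)=\sum_{i=0}^n(XY)_{ii}$, I note that $Yu_i=\theta_i^*u_i+\varphi_i u_{i-1}$ and then apply $X$; the diagonal contribution of $XY$ at index $i$ comes from the $\theta_i^*u_i$ term keeping index $i$ (contributing $\theta_i\theta_i^*$) together with the $\varphi_i u_{i-1}$ term whose image under $X$ has an $u_i$-component $u_i$ (the coefficient of $u_i$ in $Xu_{i-1}=\theta_{i-1}u_{i-1}+u_i$ is $1$), contributing $\varphi_i$. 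Hence $\mathrm{tr}(XY)=\sum_{i=0}^n\theta_i\theta_i^*+\sum_{i=1}^n\varphi_i$.

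The main obstacle will be the algebra in evaluating $\sum\theta_i\theta_i^*$ and $\sum\varphi_i$ and verifying that everything collapses to exactly $c$. The product $\theta_i\theta_i^*$ carries $(-1)^{2i}=1$, so that sum is a genuine (non-alternating) polynomial sum in $i$ of degree two, while $\sum\varphi_i$ splits into even-$i$ and odd-$i$ pieces with the case-defined $\varphi_i$; both require careful summation using the standard formulas for $\sum i$ and $\sum i^2$ over the relevant ranges. I expect the $a$-, $b$-, and $c$-dependence to reorganize so that $2\,\mathrm{tr}(XY)-(2ab-c(n+1))(n+1)$ reduces to $c$ after the dust settles, consistent with the symmetry among $a,b,c$ visible in Proposition \ref{prop:BImodule_odddim}(ii). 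Since this is purely a bookkeeping computation with guaranteed cancellation forced by the even parity of $n$, I would present the three trace evaluations in turn and suppress the routine arithmetic of the polynomial sums.
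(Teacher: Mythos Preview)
Your proposal is correct and follows essentially the same route as the paper: you compute $\mathrm{tr}\,X=\sum_i\theta_i$ and $\mathrm{tr}\,Y=\sum_i\theta_i^*$ from the (bi)triangular action in Proposition~\ref{prop:BImodule_odddim}(i), then recover $\mathrm{tr}\,Z$ from $Z=\{X,Y\}-\kappa$ via $\mathrm{tr}(XY)=\sum_i\theta_i\theta_i^*+\sum_i\varphi_i$. The paper simply records the closed-form value $\mathrm{tr}(XY)=ab(n+1)-\tfrac{cn(n+2)}{2}$ and omits the intermediate arithmetic you describe.
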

\begin{proof}
By Proposition~\ref{prop:BImodule_odddim} the traces of $X,Y,XY$ on $O_n(a,b,c)$ are
\begin{gather*}
\sum_{i=0}^n \theta_i = a,
\qquad 
\sum_{i=0}^n \theta_i^* = b,
\qquad 
\sum_{i=0}^n \theta_i\theta_i^* + \sum_{i=1}^n \varphi_i
=ab(n+1)-\frac{cn(n+2)}{2},
\end{gather*}
respectively. By \eqref{BI:center1} the element $Z=\{X,Y\}-\kappa$.
Taking the trace on $O_n(a,b,c)$ shows that the trace of $Z$ on $O_n(a,b,c)$ is $c$. The lemma follows. 
\end{proof}

\begin{thm}[Theorem 2.8, \cite{Huang:BImodule}]
\label{thm:BImodule_odddim_classification}
Let $n\geq 0$ be an even integer. Suppose that $V$ is an $(n+1)$-dimensional irreducible $\BI$-module. Then there exist unique scalars $a,b,c\in\C$ such that the $\BI$-module $O_n(a,b,c)$ is isomorphic to $V$. Moreover, for any $a,b,c\in \C$ the $\BI$-module $O_n(a,b,c)$ is irreducible if and only if 
$$
a+b+c,a-b-c,-a+b-c,-a-b+c\notin\left\{
\frac{n+1}{2}-i \,\Bigg|\, i=2,4,\dots,n\right\}.
$$
\end{thm}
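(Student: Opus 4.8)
The statement splits into a classification part (existence and uniqueness of $O_n(a,b,c)$) and an irreducibility criterion, and I would treat them in turn. For uniqueness, the plan is to read off the parameters from traces: by Lemma~\ref{lem:BImodule_odddim_trace} the traces of $X,Y,Z$ on $O_n(a,b,c)$ are $a,b,c$, so if an irreducible $V$ is isomorphic to $O_n(a,b,c)$ then $a,b,c$ are forced to be the traces of $X,Y,Z$ on $V$, leaving at most one admissible triple. For existence, fix an $(n+1)$-dimensional irreducible $V$. Since $\C$ is algebraically closed and $\kappa,\lambda,\mu$ are central, Schur's lemma makes them act as scalars, and I set $a,b,c$ to be the traces of $X,Y,Z$ on $V$, so that $O_n(a,b,c)$ is the only possible target.

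The heart of existence is to manufacture the split basis of Proposition~\ref{prop:BImodule_odddim}. Substituting $Z=\{X,Y\}-\kappa$ into $\{Z,X\}=Y+\mu$ produces the relation $X^2Y+2XYX+YX^2=Y+2\kappa X+\mu$ involving only $X,Y$. Applying it to an eigenvector $v$ with $Xv=\theta v$ yields $\bigl((X+\theta)^2-1\bigr)(Yv)=(2\kappa\theta+\mu)v$, which forces $Y$ to carry the $\theta$-eigenspace of $X$ into the span of the eigenspaces for $\theta$ and for $-\theta\pm1$. Thus $Y$ links the $X$-eigenvalues along the single chain $\theta_0,\,-\theta_0-1,\,\theta_0+2,\dots$, and irreducibility forces the spectrum of $X$ to be exactly one such chain, multiplicity-free, of length $n+1$; the trace normalization $\sum_i\theta_i=a$ then pins it down as $\theta_i=\tfrac{(-1)^i(2a-n+2i)}2$. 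Ordering $X$-eigenvectors along the chain gives a basis in which $X$ is lower bidiagonal and $Y$ upper bidiagonal; the diagonal of $Y$ recovers $\theta_i^*$ (hence $b$), and evaluating the scalar $\mu$ (equivalently $Z$) fixes the superdiagonal entries as the stated $\varphi_i$, identifying $V$ with $O_n(a,b,c)$. The main obstacle is that the relation above only records which eigenvalues $Y$ connects, not that $X$ is semisimple with simple spectrum; I would secure this by an extremal-weight argument, starting from an eigenvector at an end of the chain, building the split structure by repeated application of $Y$, and using irreducibility together with the count $\dim V=n+1$ (with $n$ even) to rule out multiplicities and force the chain to have exactly $n+1$ terms.

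For the irreducibility criterion, note first that $Z$ is a polynomial in $X,Y$, so a subspace is a $\BI$-submodule exactly when it is invariant under both $X$ and $Y$. In the split basis $X$ is lower bidiagonal with nonzero subdiagonal and $Y$ is upper bidiagonal with superdiagonal $\varphi_i$, so $O_n(a,b,c)$ carries a Leonard-pair structure whose reducibility is governed by the vanishing of its two split sequences. The visible sequence is $\{\varphi_i\}$: if $\varphi_i=0$ then $\mathrm{span}\{u_i,\dots,u_n\}$ is a proper submodule, and the explicit formula shows that $\varphi_i=0$ for even, respectively odd, $i$ is equivalent to $a+b+c$, respectively $-a-b+c$, lying in $S:=\{\tfrac{n+1}2-i\mid i=2,4,\dots,n\}$. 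The remaining two conditions, on $a-b-c$ and $-a+b-c$, come from the complementary split sequence associated with the opposite end of the module; its vanishing produces submodules that are \emph{not} spanned by consecutive $u_i$ (concretely, annihilators of common left-eigenvectors of $X$ and $Y$). I would compute this second sequence explicitly and show that $O_n(a,b,c)$ is reducible precisely when some entry of either split sequence vanishes; the converse, that all four quantities avoiding $S$ forces $(X,Y)$ to generate the full endomorphism algebra, then follows by a density argument. The main difficulty is exactly these hidden, non-interval submodules: a naive interval analysis detects only the conditions on $a+b\pm c$, and recovering the conditions on $a-b\pm c$ requires the second split sequence rather than the bookkeeping of intervals alone.
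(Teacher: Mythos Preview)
The paper does not supply a proof of this statement: it is quoted as Theorem~2.8 of \cite{Huang:BImodule} and invoked as a black box, so there is no argument in the present paper to compare your proposal against.

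Taken on its own, your outline is the standard one and is broadly consistent with how such classifications are obtained in \cite{Huang:BImodule}: recover $(a,b,c)$ from the traces of $X,Y,Z$ (this is exactly Lemma~\ref{lem:BImodule_odddim_trace}), use the cubic relation of Lemma~\ref{lem:BI&LT}(i) to see that $Y$ links the $X$-eigenspace for $\theta$ only to those for $\theta$ and $-\theta\pm 1$, and analyse the two split sequences for the irreducibility criterion. Two places would need genuine work before this becomes a proof. First, the extremal-weight step: the relation $\bigl((X+\theta)^2-1\bigr)Yv\in\C v$ tells you where $Y$ can send an $X$-eigenvector, but it does not by itself show that $X$ is diagonalizable or that its spectrum is a single multiplicity-free string of length $n+1$; the ``iterate $Y$ from an endpoint'' argument must contend with the fact that $Y$ maps into a sum of \emph{three} eigenspaces, and one has to rule out branching and Jordan blocks before the lower/upper bidiagonal form emerges. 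Second, for the irreducibility criterion you correctly identify that the interval submodules $\mathrm{span}\{u_i,\dots,u_n\}$ detect only the conditions on $a+b+c$ and $-a-b+c$, and that the remaining two come from the second split sequence; but the concluding ``density argument'' is not an argument yet---one actually has to compute that second sequence and show directly that nonvanishing of all $\varphi_i$ and all $\phi_i$ forces any nonzero $(X,Y)$-invariant subspace to contain some $u_j$ and hence all of them.
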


Applying Lemma \ref{lem:BImodule_odddim_trace}, we obtain the following criterion for determining the parameters $a,b,c$ of $O_n(a,b,c)$ to which a given odd-dimensional irreducible $\BI$-module is isomorphic.

\begin{lem}
\label{lem:BImodule_odddim_criterion}
Let $n\geq 0$ be an even integer. Suppose that $V$ is an $(n+1)$-dimensional irreducible $\BI$-module. For any $a,b,c\in \C$, the $\BI$-module $O_n(a,b,c)$ is isomorphic to $V$ if and only if the traces of $X,Y,Z$ on $V$ are equal to $a,b,c$    respectively.
\end{lem}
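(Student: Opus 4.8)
The plan is to deduce this criterion directly from the two preceding results, namely the classification in Theorem \ref{thm:BImodule_odddim_classification} and the trace computation in Lemma \ref{lem:BImodule_odddim_trace}, using only the elementary fact that the trace of the action of a fixed algebra element is invariant under $\BI$-module isomorphism. There is no genuine computation to perform here; the content is purely organizational, turning the abstract uniqueness statement of Theorem \ref{thm:BImodule_odddim_classification} into an explicit recipe for recovering the parameters from $V$.

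First I would dispose of the forward implication. Assuming that $O_n(a,b,c)$ is isomorphic to $V$, the traces of $X$, $Y$, $Z$ on $V$ agree with their traces on $O_n(a,b,c)$, since isomorphic modules realize each generator by conjugate operators and conjugate operators have equal traces. By Lemma \ref{lem:BImodule_odddim_trace} these traces are $a$, $b$, $c$ respectively, which is exactly the asserted conclusion.

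For the converse I would invoke Theorem \ref{thm:BImodule_odddim_classification} to produce (unique) scalars $a',b',c'\in\C$ with $O_n(a',b',c')\cong V$. Applying the forward implication already established to the triple $(a',b',c')$ shows that the traces of $X$, $Y$, $Z$ on $V$ equal $a'$, $b'$, $c'$. On the other hand, by hypothesis these same traces equal $a$, $b$, $c$. Comparing gives $a'=a$, $b'=b$, $c'=c$, whence $O_n(a,b,c)=O_n(a',b',c')\cong V$, as required.

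The only point requiring a word of care, and the closest thing to an obstacle, is the legitimacy of invoking trace-invariance: one must note that $X,Y,Z$ act on the finite-dimensional space $V$, so their traces are well defined, and that a $\BI$-module isomorphism intertwines these actions, so the induced change of basis leaves each trace unchanged. Beyond this routine observation, the argument is immediate, and the uniqueness clause of Theorem \ref{thm:BImodule_odddim_classification} is what makes the parameters genuinely determined by the three trace values.
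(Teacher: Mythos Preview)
Your argument is correct and matches the paper's approach: the paper treats this lemma as an immediate consequence of Lemma~\ref{lem:BImodule_odddim_trace} together with the uniqueness clause of Theorem~\ref{thm:BImodule_odddim_classification}, exactly as you do.
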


We now turn our attention to even-dimensional irreducible $\BI$-modules.

\begin{prop}
[Proposition 2.4, \cite{Huang:BImodule}]
\label{prop:BImodule_evendim}
For any $a,b,c\in \C$ and any odd integer $n\geq 1$, there exists an $(n+1)$-dimensional $\BI$-module $E_n(a,b,c)$ satisfying the following conditions:
\begin{enumerate}
\item  There exists a basis $\{u_i\}_{i=0}^n$ for $E_n(a,b,c)$ such that 
\begin{align*}
(X-\theta_i)u_i&=u_{i+1} \qquad (0\leq i\leq n-1),
\qquad 
(X-\theta_n)u_n=0,
\\
(Y-\theta_i^*)u_i&=\varphi_i u_{i-1} \qquad (1\leq i\leq n),
\qquad 
(Y-\theta_0^*)u_0=0,
\end{align*}
where
\begin{align*}
\theta_i&=
\frac{(-1)^i(2a-n+2i)}2 \qquad(0\leq i\leq n),
\\
\theta^*_i&=
\frac{(-1)^i(2b-n+2i)}2 \qquad(0\leq i\leq n),
\\
\varphi_i&=
\left\{
\begin{array}{ll}
i(n-i+1)
\quad 
&\hbox{if $i$ is even},
\\
\displaystyle c^2-
\left(\frac{2a+2b-n+2i-1}{2}\right)^2
\quad &\hbox{if $i$ is odd}
\end{array}
\right.
\qquad(1\leq i\leq n).
\end{align*}

\item The elements $\kappa,\lambda,\mu$ act on $E_n(a,b,c)$ as scalar multiplication by 
\begin{align*}
&c^2-a^2-b^2+\left(\frac{n+1}{2}\right)^2,
\\
&a^2-b^2-c^2+\left(\frac{n+1}{2}\right)^2,
\\
&b^2-c^2-a^2+\left(\frac{n+1}{2}\right)^2,
\end{align*}
respectively.
\end{enumerate}
\end{prop}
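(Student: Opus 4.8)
The plan is to build the module by hand and then verify the defining relations of $\BI$ directly on the displayed basis. First I would let $E_n(a,b,c)$ be the $(n+1)$-dimensional space with basis $\{u_i\}_{i=0}^n$ and define $X$ and $Y$ by the bidiagonal rules of part (i), namely $Xu_i=\theta_iu_i+u_{i+1}$ and $Yu_i=\theta_i^*u_i+\varphi_iu_{i-1}$, with the conventions $u_{-1}=u_{n+1}=0$; thus $X$ is lower-bidiagonal and $Y$ is upper-bidiagonal. The observation that drives every cancellation below is the sign identity
\[
\theta_{i-1}+\theta_i=(-1)^i,\qquad \theta_{i-1}^*+\theta_i^*=(-1)^i\qquad(1\le i\le n),
\]
each of which is immediate from the explicit formulas for $\theta_i$ and $\theta_i^*$.

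Next I would set $Z:=\{X,Y\}-\kappa_0$ with $\kappa_0=c^2-a^2-b^2+\bigl(\tfrac{n+1}{2}\bigr)^2$. A short computation shows $\{X,Y\}$ is tridiagonal, and the sign identities collapse its off-diagonal entries, yielding $Zu_i=z_iu_i+(-1)^{i+1}u_{i+1}+(-1)^i\varphi_iu_{i-1}$ with $z_i=2\theta_i\theta_i^*+\varphi_i+\varphi_{i+1}-\kappa_0$. By construction $\kappa=\{X,Y\}-Z=\kappa_0$ acts as a scalar and is therefore central, so it remains only to show that $\{Y,Z\}-X$ and $\{Z,X\}-Y$ act as the scalars $\lambda_0=a^2-b^2-c^2+\bigl(\tfrac{n+1}{2}\bigr)^2$ and $\mu_0=b^2-c^2-a^2+\bigl(\tfrac{n+1}{2}\bigr)^2$. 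Once this is done all three elements of (\ref{BI:centers}) are scalar, so $E_n(a,b,c)$ is a $\BI$-module and part (ii) reads off at once.

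To check $\{Y,Z\}=X+\lambda_0$ I would expand $\{Y,Z\}u_i$ band by band. Since $Y$ shifts the index by $0$ or $-1$ and $Z$ by $0$ or $\pm1$, the result a priori meets $u_{i+1},u_i,u_{i-1},u_{i-2}$. The $u_{i-2}$-terms cancel identically, and the $u_{i+1}$-coefficient is $(-1)^{i+1}(\theta_i^*+\theta_{i+1}^*)=1$, matching the subdiagonal of $X$; both facts use only the sign identities. This leaves two genuine scalar identities in $i$: vanishing of the $u_{i-1}$-coefficient, which reduces to $z_i+z_{i-1}+1=0$, and agreement of the $u_i$-coefficient $2z_i\theta_i^*+(-1)^i(\varphi_i-\varphi_{i+1})$ with $\theta_i+\lambda_0$. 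The relation $\{Z,X\}=Y+\mu_0$ is then obtained by an entirely analogous band computation, with $\theta$ and $\theta^*$ (equivalently $a$ and $b$) interchanged.

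I expect the real obstacle to be exactly these scalar identities, which do not follow from the sign identities. Establishing $z_i+z_{i-1}+1=0$ amounts to
\[
2\theta_i\theta_i^*+2\theta_{i-1}\theta_{i-1}^*+\varphi_{i-1}+2\varphi_i+\varphi_{i+1}+1=2\kappa_0,
\]
and this is where the explicit piecewise formula for $\varphi_i$ and the precise value of $\kappa_0$ are forced: because $\varphi_i$ is defined by cases, the verification splits according to the parity of $i$, and the companion diagonal identity likewise pins down $\lambda_0$ (and then $\mu_0$). These are elementary polynomial identities in $i$ and the parameters $a,b,c,n$, but carrying out the even/odd case analysis cleanly and handling the boundary indices $i=0$ and $i=n$ is the genuine content of the argument.
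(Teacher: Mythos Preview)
The paper does not prove this proposition; it is imported verbatim as Proposition~2.4 of \cite{Huang:BImodule}, so there is no in-paper argument to compare against. Your direct-verification plan is nonetheless a correct and standard way to establish such a statement: the band computation you describe is accurate (your formula $Zu_i=z_iu_i+(-1)^{i+1}u_{i+1}+(-1)^i\varphi_iu_{i-1}$, the cancellation of the $u_{i\pm2}$ terms in $\{Y,Z\}$ and $\{Z,X\}$, and the surviving coefficients all check out), and your remark that the $a\leftrightarrow b$ swap transports the $\{Y,Z\}$ diagonal identity to the $\{Z,X\}$ one is valid since $\kappa_0$, each $\varphi_i$, and hence each $z_i$ are symmetric in $a,b$ while $\lambda_0\leftrightarrow\mu_0$ under that swap.

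Two small omissions to fix when you write it up. First, you need the boundary conventions $\varphi_0=\varphi_{n+1}=0$ to make sense of $z_0$ and $z_n$; with them, the identity $z_{i-1}+z_i+1=0$ at $i=1$ and $i=n$ handles the edge cases automatically. Second, you have correctly isolated but not yet carried out the two parity-split polynomial checks $z_{i-1}+z_i+1=0$ and $2z_i\theta_i^*+(-1)^i(\varphi_i-\varphi_{i+1})=\theta_i+\lambda_0$; these are where the specific piecewise form of $\varphi_i$ and the exact constants $\kappa_0,\lambda_0,\mu_0$ are pinned down, and they are the genuine content of the proof.
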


\begin{lem}
\label{lem:BImodule_evendim_trace} 
For any $a,b,c\in \C$ and any odd integer $n\geq 1$, the traces of $X,Y,Z$ on $E_n(a,b,c)$ are equal to $-\frac{n+1}{2}$.
\end{lem}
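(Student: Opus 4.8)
The plan is to read the traces of $X$ and $Y$ directly off the basis action in Proposition~\ref{prop:BImodule_evendim}(i), and to extract the trace of $Z$ from the defining relation \eqref{BI:center1}, mirroring the proof of Lemma~\ref{lem:BImodule_odddim_trace}. For $X$, the relations $(X-\theta_i)u_i=u_{i+1}$ and $(X-\theta_n)u_n=0$ show that $X$ is lower triangular with respect to $\{u_i\}_{i=0}^n$, with diagonal entries $\theta_0,\dots,\theta_n$; hence $\mathrm{tr}(X)=\sum_{i=0}^n\theta_i$, and likewise $\mathrm{tr}(Y)=\sum_{i=0}^n\theta_i^*$. Since $n$ is odd there are an even number $n+1$ of summands, so writing $\theta_i=\frac{(-1)^i(2a-n+2i)}{2}$ the evaluation reduces to the alternating sums $\sum_{i=0}^n(-1)^i=0$ and $\sum_{i=0}^n(-1)^i i=-\frac{n+1}{2}$. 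The part proportional to $2a-n$ drops out, leaving $\mathrm{tr}(X)=-\frac{n+1}{2}$; the identical computation with $b$ in place of $a$ gives $\mathrm{tr}(Y)=-\frac{n+1}{2}$.

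For $Z$ I would use \eqref{BI:center1} in the form $Z=\{X,Y\}-\kappa$, together with the fact that $\kappa$ acts on $E_n(a,b,c)$ as the scalar $c^2-a^2-b^2+\big(\frac{n+1}{2}\big)^2$ from Proposition~\ref{prop:BImodule_evendim}(ii). Taking traces then yields
$$
\mathrm{tr}(Z)=2\,\mathrm{tr}(XY)-(n+1)\left(c^2-a^2-b^2+\Big(\tfrac{n+1}{2}\Big)^2\right).
$$
From the bidiagonal actions $Xu_i=\theta_i u_i+u_{i+1}$ and $Yu_i=\theta_i^* u_i+\varphi_i u_{i-1}$ one reads off the diagonal of $XY$ and obtains $\mathrm{tr}(XY)=\sum_{i=0}^n\theta_i\theta_i^*+\sum_{i=1}^n\varphi_i$, exactly the structure used for $O_n(a,b,c)$ in Lemma~\ref{lem:BImodule_odddim_trace}. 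It then remains to evaluate the two sums and substitute.

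The final step is the computational core. Because $(-1)^i(-1)^i=1$, the products $\theta_i\theta_i^*$ carry no sign and $\sum_{i=0}^n\theta_i\theta_i^*$ evaluates cleanly using $\sum_{i=0}^n(2i-n)=0$ and the closed form for $\sum_{i=0}^n(2i-n)^2$ at odd $n$. The sum $\sum_{i=1}^n\varphi_i$ must be split by the parity of $i$: the even-$i$ terms $i(n-i+1)$ give an elementary polynomial sum, while the odd-$i$ terms contribute $\frac{n+1}{2}c^2$ together with a sum of squares of the shifts $\frac{2a+2b-n+2i-1}{2}$. I expect the main obstacle to be the bookkeeping in this last sum and the verification that, after doubling and subtracting $(n+1)\kappa$, all dependence on $a,b,c$ cancels: the $c^2$ contributions cancel against $-(n+1)c^2$; the $a^2,b^2,ab$ terms cancel against $(n+1)(a^2+b^2)+2(n+1)ab$; and the terms linear in $a,b$ vanish because the relevant shifts sum to zero over the odd indices. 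The surviving constant then simplifies to $-\frac{n+1}{2}$, which completes the proof.
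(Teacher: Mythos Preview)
Your proposal is correct and follows essentially the same approach as the paper: compute $\mathrm{tr}(X)$, $\mathrm{tr}(Y)$, $\mathrm{tr}(XY)$ from the bidiagonal actions in Proposition~\ref{prop:BImodule_evendim}(i), then use $Z=\{X,Y\}-\kappa$ together with the scalar action of $\kappa$ from Proposition~\ref{prop:BImodule_evendim}(ii). The paper simply records the closed form $\mathrm{tr}(XY)=\tfrac{(n+1)(4c^2-4a^2-4b^2+n^2+2n-1)}{8}$ and substitutes, whereas you outline the cancellation mechanism; one small slip in your sketch is that no $2(n+1)ab$ term arises from $\kappa$---the $ab$ contributions already cancel inside $2\,\mathrm{tr}(XY)$---but this does not affect the argument.
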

\begin{proof}
By Proposition \ref{prop:BImodule_evendim} the traces of $X, Y,XY$ on $E_n(a,b,c)$ are equal to 
\begin{gather*}
\sum_{i=0}^n \theta_i=-\frac{n+1}{2}, \qquad 
\sum_{i=0}^n \theta_i^*=-\frac{n+1}{2},
\\
\sum_{i=0}^n \theta_i\theta_i^*+\sum_{i=1}^n \varphi_i
=\frac{(n+1)(4c^2-4a^2-4b^2+n^2+2n-1)}{8},
\end{gather*}
respectively.
By \eqref{BI:center1} the element $Z=\{X,Y\}-\kappa$.
Taking the trace on $E_n(a,b,c)$ shows that the trace of $Z$ on $E_n(a,b,c)$ is $-\frac{n+1}{2}$. The lemma follows. 
\end{proof}

The set $\{\pm 1\}$ is a group under multiplication. There is a natural $\{\pm 1\}^3$-action on $\C^3$ given by 
\begin{align*}
(a,b,c)^{(-1,1,1)}&=(-a,b,c),
\\
(a,b,c)^{(1,-1,1)}&=(a,-b,c),
\\
(a,b,c)^{(1,1,-1)}&=(a,b,-c)
\end{align*}
for all $(a,b,c)\in \C^3$. 
There exists a unique $\{\pm 1\}^2$-action on $\BI$ such that each $g\in \{\pm 1\}^2$ acts as the algebra automorphism of $\BI$ specified in the following table:
\begin{table}[H]
\centering
\extrarowheight=3pt
\begin{tabular}{c||rrr|rrr}
$u$  
&$X$ &$Y$ &$Z$ &$\kappa$ & $\lambda$ &$\mu$
\\
\midrule[1pt]
$(1,1)(u)$
& $X$    &$Y$ &$Z$ &$\kappa$ & $\lambda$ &$\mu$
\\
$(1,-1)(u)$
& $X$    &$-Y$ &$-Z$ &$-\kappa$ & $\lambda$ &$-\mu$
\\
$(-1,1)(u)$
& $-X$  &$Y$  &$-Z$&$-\kappa$ & $-\lambda$ &$\mu$
\\
$(-1,-1)(u)$
& $-X$  &$-Y$  &$Z$ &$\kappa$ & $-\lambda$ &$-\mu$
\end{tabular}
\caption{The $\{\pm 1\}^2$-action on $\BI$}
\label{t:BI&pm1^2}
\end{table}

Let $V$ be a module over an algebra. For any automorphism $\varepsilon$ of the algebra, we denote by $V^{\e}$ the module obtained from $V$ by twisting the module structure via $\e$.

\begin{thm}
[Theorem 2.5, \cite{Huang:BImodule}]
\label{thm:BImodule_evendim_classification}
Let $n\geq 1$ be an odd integer. Suppose that $V$ is an $(n+1)$-dimensional irreducible $\BI$-module. Then there exist scalars $a,b,c\in \C$ and a unique element $(\e,\e')\in \{\pm 1\}^2$ such that the $\BI$-module $E_n(a,b,c)^{(\e,\e')}$ is isomorphic to $V$, where the triple $(a,b,c)$ is unique up to the $\{\pm 1\}^3$-action on $\C^3$. Moreover, for any $a,b,c\in \C$ the $\BI$-module $E_n(a,b,c)$ is irreducible if and only if 
$$
a+b+c,-a+b+c,a-b+c,a+b-c\notin 
\left\{ \frac{n-1}{2}-i\,\Bigg|\,i=0,2,\ldots,d-1\right\}.
$$ 
\end{thm}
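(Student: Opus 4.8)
The plan is to establish Theorem~\ref{thm:BImodule_evendim_classification}, the classification of even-dimensional irreducible $\BI$-modules, which I analyze in two independent halves. Let me first sketch the existence-and-uniqueness half of the statement, and then the irreducibility criterion.

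\textbf{Existence and uniqueness of the normal form.} Fix an odd integer $n\geq 1$ and an $(n+1)$-dimensional irreducible $\BI$-module $V$. The guiding idea is that the four algebra automorphisms in Table~\ref{t:BI&pm1^2} and the three sign-flips on the parameter triple are precisely the gauge freedom one must quotient out. First I would show that the three traces $\operatorname{tr}_V X$, $\operatorname{tr}_V Y$, $\operatorname{tr}_V Z$ are each forced to equal $-\tfrac{n+1}{2}$; the model calculation in Lemma~\ref{lem:BImodule_evendim_trace} shows $E_n(a,b,c)$ has exactly these traces, and after twisting by $(\e,\e')\in\{\pm1\}^2$ the traces only change sign, so matching traces pins down the twist $(\e,\e')$ uniquely once $a,b,c$ are normalized. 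Concretely, I would diagonalize one of the generators, say $X$, read off its eigenvalues, and recover $a$ up to sign from the spectrum using the formula $\theta_i=\tfrac{(-1)^i(2a-n+2i)}2$ of Proposition~\ref{prop:BImodule_evendim}; similarly for $b$ from $Y$, and $c$ from the action of the central Casimir element $X^2+Y^2+Z^2$ together with the central values of $\kappa,\lambda,\mu$ recorded in Proposition~\ref{prop:BImodule_evendim}(ii). The uniqueness of $(a,b,c)$ up to the $\{\pm1\}^3$-action then follows because the three equations
\begin{align*}
\kappa &= c^2-a^2-b^2+\left(\tfrac{n+1}{2}\right)^2,\\
\lambda &= a^2-b^2-c^2+\left(\tfrac{n+1}{2}\right)^2,\\
\mu &= b^2-c^2-a^2+\left(\tfrac{n+1}{2}\right)^2
\end{align*}
determine $a^2,b^2,c^2$ from the central scalars, leaving only the independent choices of sign.

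\textbf{The irreducibility criterion.} For the second half, using the explicit tridiagonal action of $X$ and $Y$ in the raising--lowering basis $\{u_i\}$, I would argue that a proper submodule, if it exists, must be spanned by a consecutive block of basis vectors, and that the module fails to be irreducible exactly when some $\varphi_i$ with $i$ odd vanishes. Setting the odd-index expression $\varphi_i=c^2-\bigl(\tfrac{2a+2b-n+2i-1}{2}\bigr)^2$ equal to zero and factoring as a difference of squares gives $c=\pm\tfrac{2a+2b-n+2i-1}{2}$; rearranging and letting $i$ range over the admissible odd indices yields precisely the forbidden set $\bigl\{\tfrac{n-1}{2}-i\bigr\}$ applied to the four sign-combinations $a+b+c$, $-a+b+c$, $a-b+c$, $a+b-c$. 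The even-index coefficients $\varphi_i=i(n-i+1)$ are automatically nonzero in range and so impose no condition.

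\textbf{Main obstacle.} I expect the genuinely delicate point to be the uniqueness bookkeeping: disentangling the $\{\pm1\}^2$-twist $(\e,\e')$ from the $\{\pm1\}^3$ parameter-sign ambiguity. The automorphisms in Table~\ref{t:BI&pm1^2} act on the parameters in a way that overlaps with the naive sign-flips of $(a,b,c)$, so I would have to verify carefully that the map $\{\pm1\}^2\times(\C^3/\{\pm1\}^3)\to\{\text{iso. classes}\}$ is a bijection onto the even-dimensional irreducibles—in particular that no two distinct pairs $\bigl((\e,\e'),(a,b,c)\bigr)$ give isomorphic modules. This is exactly where the trace normalization does the heavy lifting: because the three traces are fixed at $-\tfrac{n+1}{2}$ rather than being free, the sign data is rigid, and I would lean on Lemma~\ref{lem:BImodule_evendim_trace} to close the argument. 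The rest is, as indicated, a matter of matching spectra and central values against the explicit model of Proposition~\ref{prop:BImodule_evendim}.
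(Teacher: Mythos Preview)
The paper does not prove this theorem. It is quoted verbatim as Theorem~2.5 of \cite{Huang:BImodule} and used as a black box; no argument for it appears anywhere in the present paper. There is therefore nothing here to compare your sketch against.

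A few remarks on the sketch itself, since you asked for a plan. First, the opening claim that ``the three traces $\operatorname{tr}_V X$, $\operatorname{tr}_V Y$, $\operatorname{tr}_V Z$ are each forced to equal $-\tfrac{n+1}{2}$'' is not correct as stated: on an arbitrary irreducible $V$ each trace is only forced to be $\pm\tfrac{n+1}{2}$, and it is precisely this sign data that fixes the twist $(\e,\e')$. You say as much two lines later, so this is probably just loose wording, but be careful: the trace of $Z$ is not an independent sign, since the $\{\pm1\}^2$-action of Table~\ref{t:BI&pm1^2} forces $\operatorname{sgn}(\operatorname{tr}Z)=\operatorname{sgn}(\operatorname{tr}X)\cdot\operatorname{sgn}(\operatorname{tr}Y)$. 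Second, your plan to ``diagonalize one of the generators, say $X$'' presupposes that $X$ is diagonalizable on $V$, which is not automatic (cf.\ Lemma~\ref{lem:En(a,b,c)&LT}, where diagonalizability is a nontrivial condition on $a$); the robust approach is to work with the lower-bidiagonal form of $X$ in the basis $\{u_i\}$ of Proposition~\ref{prop:BImodule_evendim}, not with eigenvectors. Third, for the irreducibility criterion your reasoning is essentially right: since $X$ is lower bidiagonal with nowhere-zero subdiagonal, any $X$-invariant subspace is a tail $\operatorname{span}\{u_j,\ldots,u_n\}$, and $Y$-invariance of that tail forces $\varphi_j=0$; one then checks that vanishing of some odd-index $\varphi_j$ together with the $\{\pm1\}^3$-symmetry in $(a,b,c)$ produces exactly the stated exclusion set.
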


The following lemma gives a criterion for determining the parameters $\e,\e'$ and $a,b,c$ of $E_n(a,b,c)^{(\e,\e’)}$ to which a given even-dimensional irreducible $\BI$-module is isomorphic.

\begin{lem}
\label{lem:BImodule_evendim_criterion}
Let $n\geq 1$ be an odd integer. Assume that $V$ is an $(n+1)$-dimensional irreducible $\BI$-module. For any $a,b,c\in \C$ and any $(\e,\e')\in \{\pm 1\}^2$, the $\BI$-module $V$ is isomorphic to $E_n(a,b,c)^{(\e,\e')}$ if and only if the following conditions hold:
\begin{enumerate}
\item The traces of $X$ and $Y$ on the $\BI$-module $V^{(\e,\e')}$ are equal to $-\frac{n+1}{2}$.

\item The elements $\kappa,\lambda,\mu$ act on the $\BI$-module $V^{(\e,\e')}$ as scalar multiplication by 
\begin{align*}
&c^2-a^2-b^2+\left(\frac{n+1}{2}\right)^2,
\\
&a^2-b^2-c^2+\left(\frac{n+1}{2}\right)^2,
\\
&b^2-c^2-a^2+\left(\frac{n+1}{2}\right)^2,
\end{align*}
respectively.
\end{enumerate}
\end{lem}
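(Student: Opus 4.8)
The plan is to prove both implications by passing to the twisted module $V^{(\e,\e')}$ and comparing it with $E_n(a,b,c)$ via the trace formula of Lemma \ref{lem:BImodule_evendim_trace}, the central action recorded in Proposition \ref{prop:BImodule_evendim}, and the classification Theorem \ref{thm:BImodule_evendim_classification}. Throughout I would exploit that each element of $\{\pm 1\}^2$ is an involution, so that $\left(V^{(\e,\e')}\right)^{(\e,\e')}=V$, and that twisting by $(\e,\e')$ multiplies the trace of $X$ by $\e$ and the trace of $Y$ by $\e'$, since $(\e,\e')$ sends $X\mapsto \e X$ and $Y\mapsto \e' Y$ by Table \ref{t:BI&pm1^2}. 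Note also that twisting by an algebra automorphism preserves dimension and irreducibility, so $V^{(\e,\e')}$ is again an $(n+1)$-dimensional irreducible $\BI$-module.

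For the forward implication, suppose $V\cong E_n(a,b,c)^{(\e,\e')}$. Twisting both sides by $(\e,\e')$ and using the involution property yields $V^{(\e,\e')}\cong E_n(a,b,c)$. Then Lemma \ref{lem:BImodule_evendim_trace} shows the traces of $X$ and $Y$ on $V^{(\e,\e')}$ equal $-\frac{n+1}{2}$, which is condition (i), and Proposition \ref{prop:BImodule_evendim} shows $\kappa,\lambda,\mu$ act on $V^{(\e,\e')}$ by the three displayed scalars, which is condition (ii).

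For the backward implication, set $W=V^{(\e,\e')}$. By Theorem \ref{thm:BImodule_evendim_classification} there exist scalars $\tilde a,\tilde b,\tilde c$ and a unique $(\e_1,\e_1')\in\{\pm1\}^2$ with $W\cong E_n(\tilde a,\tilde b,\tilde c)^{(\e_1,\e_1')}$. I would compute the trace of $X$ on $W$ in two ways: from (i) it is $-\frac{n+1}{2}$, while from the twist and Lemma \ref{lem:BImodule_evendim_trace} it equals $\e_1\cdot\left(-\frac{n+1}{2}\right)$; since $n\geq 1$ forces $\frac{n+1}{2}\neq 0$, this gives $\e_1=1$, and the same argument with $Y$ gives $\e_1'=1$. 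Hence $W\cong E_n(\tilde a,\tilde b,\tilde c)$. Comparing the scalars by which $\kappa,\lambda,\mu$ act, from (ii) against Proposition \ref{prop:BImodule_evendim}, produces three equations in which the common term $\left(\frac{n+1}{2}\right)^2$ cancels; taking the three pairwise sums isolates $a^2=\tilde a^2$, $b^2=\tilde b^2$ and $c^2=\tilde c^2$, so $(\tilde a,\tilde b,\tilde c)$ lies in the $\{\pm1\}^3$-orbit of $(a,b,c)$. The uniqueness clause of Theorem \ref{thm:BImodule_evendim_classification} then gives $W\cong E_n(a,b,c)$, and untwisting by $(\e,\e')$ yields $V\cong E_n(a,b,c)^{(\e,\e')}$.

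The step I expect to require the most care is this last one, where I use that every triple in the $\{\pm1\}^3$-orbit of $(\tilde a,\tilde b,\tilde c)$ produces a module isomorphic to $W$. This rests on reading the uniqueness statement of Theorem \ref{thm:BImodule_evendim_classification} as asserting that the admissible triples form a full orbit, which is consistent with the irreducibility criterion there being $\{\pm1\}^3$-invariant: the four forms $a+b+c,-a+b+c,a-b+c,a+b-c$ are permuted up to sign under the action, and the excluded set $\left\{\frac{n-1}{2}-i\right\}$ is symmetric about $0$. Apart from this, the only delicate point is the bookkeeping of traces and of $\kappa,\lambda,\mu$ under the twist, which is dictated entirely by Table \ref{t:BI&pm1^2} and is otherwise routine.
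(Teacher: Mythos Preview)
Your proposal is correct and follows essentially the same approach as the paper. The only cosmetic difference is that the paper applies Theorem \ref{thm:BImodule_evendim_classification} directly to $V$ (obtaining a twist parameter $(\epsilon,\epsilon')$ and then showing $(\epsilon,\epsilon')=(\e,\e')$), whereas you apply it to $W=V^{(\e,\e')}$ and show the resulting twist parameter is $(1,1)$; the remaining trace and central-element comparisons are identical.
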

\begin{proof}
($\Rightarrow$): Condition (i) follows immediately from Lemma \ref{lem:BImodule_evendim_trace} and Table \ref{t:BI&pm1^2}. 
Condition (ii) follows immediately from Proposition \ref{prop:BImodule_evendim}(ii).

($\Leftarrow$): 
By Theorem \ref{thm:BImodule_evendim_classification} there exist $a',b',c'\in \C$ and $(\epsilon,\epsilon')\in\{\pm1\}^2$ such that the $\BI$-module $V$ is isomorphic to $E_n(a',b',c')^{(\epsilon,\epsilon')}$. 
By Table \ref{t:BI&pm1^2} and Lemma  \ref{lem:BImodule_evendim_trace}, the traces of $X$ and $Y$ on $V$ are $-\epsilon \frac{n+1}{2}$ and $-\epsilon' \frac{n+1}{2}$, respectively. Condition (i) then forces $(\epsilon,\epsilon’)=(\e,\e’)$.

It follows that the $\BI$-module $V$ is isomorphic to $E_n(a',b',c')^{(\e,\e')}$; equivalently, the $\BI$-module $V^{(\e,\e')}$ is isomorphic to $E_n(a',b',c')$.
Using Proposition \ref{prop:BImodule_evendim} the elements $\kappa,\lambda,\mu$ act on $V^{(\e,\e')}$ as scalar multiplication by 
\begin{align*}
&c'^2-a'^2-b'^2+\left(\frac{n+1}{2}\right)^2,
\\
&a'^2-b'^2-c'^2+\left(\frac{n+1}{2}\right)^2,
\\
&b'^2-c'^2-a'^2+\left(\frac{n+1}{2}\right)^2,
\end{align*}
respectively. Comparing with condition (ii), this implies that $(a',b',c')$ lies in the $\{\pm 1\}^3$-orbit of $(a,b,c)$. By Theorem  \ref{thm:BImodule_evendim_classification} the $\BI$-module $E_n(a,b,c)^{(\e,\e')}$ is therefore isomorphic to $V$.
The lemma follows. 
\end{proof}


\section{Leonard triples on finite-dimensional irreducible $\BI$-modules} \label{s:LT&BI}

A square matrix is called {\it tridiagonal} if all nonzero entries lie on the main diagonal, the subdiagonal, or the superdiagonal. 
A tridiagonal matrix is called {\it irreducible} if every entry on the subdiagonal and every entry on the superdiagonal is nonzero.

\begin{defn} 
[Definition 1.2, \cite{cur2007-2}]
\label{defn:LT}
Let $V$ be a nonzero finite-dimensional vector space. An ordered triple of linear operators $L : V \to V$, $L^* : V \to V$, $L^\e : V \to V$ is called a {\it Leonard triple} if the following conditions hold:
\begin{enumerate}
\item There exists a basis for $V$ with respect to which the matrix representing $L$ is diagonal and the matrices representing $L^*$ and $L^\e$ are irreducible tridiagonal.

\item There exists a basis for $V$ with respect to which the matrix representing $L^*$ is diagonal and the matrices representing $L^\e$ and $L$ are irreducible tridiagonal.

\item There exists a basis for $V$ with respect to which the matrix representing $L^\e$ is diagonal and the matrices representing $L$ and $L^*$ are irreducible tridiagonal.
\end{enumerate}
\end{defn}

In this section, we derive necessary and sufficient conditions for $X,Y,Z$ to form Leonard triples on finite-dimensional irreducible $\BI$-modules.

\begin{lem}
\label{lem:BI&LT}
The following relations hold in $\BI$:
\begin{enumerate}
\item $X^2 Y+2 XYX+YX^2-Y=2\kappa X+\mu$.

\item $X^2 Z+2 XZX+ZX^2-Z=2\mu X+\kappa$.
\end{enumerate}
\end{lem}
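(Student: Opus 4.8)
The plan is to prove both identities purely within the algebra $\BI$, using the defining relations
\eqref{BI:center1} and \eqref{BI:center3} together with the fact that $\kappa,\lambda,\mu$ are central. The two statements are structurally parallel, so I would set up the computation for (i) in detail and then indicate that (ii) follows by an identical argument with the roles of the relations interchanged. The starting point is to rewrite the anticommutator relation \eqref{BI:center1} in the form $Z=\{X,Y\}-\kappa=XY+YX-\kappa$, which expresses $Z$ in terms of $X$, $Y$ and the central element $\kappa$. The aim for (i) is to show that the degree-three expression $X^2Y+2XYX+YX^2-Y$ collapses to the linear expression $2\kappa X+\mu$.

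First I would observe that the left-hand side of (i) can be reorganized using anticommutators. Writing
\begin{align*}
X^2Y+2XYX+YX^2
&=X(XY+YX)+(XY+YX)X
=X\{X,Y\}+\{X,Y\}X
=\{X,\{X,Y\}\}.
\end{align*}
Then I would substitute $\{X,Y\}=Z+\kappa$ from \eqref{BI:center1}, so that
\begin{align*}
\{X,\{X,Y\}\}=\{X,Z+\kappa\}=\{X,Z\}+2\kappa X,
\end{align*}
where the last step uses that $\kappa$ is central, so $\{X,\kappa\}=2\kappa X$. Now \eqref{BI:center3} gives $\{Z,X\}=\mu+Y$, hence $\{X,Z\}=Y+\mu$. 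Combining these yields
\begin{align*}
X^2Y+2XYX+YX^2=Y+\mu+2\kappa X,
\end{align*}
and subtracting $Y$ from both sides gives exactly $2\kappa X+\mu$, proving (i). The analogue (ii) is obtained by the same chain: $X^2Z+2XZX+ZX^2=\{X,\{X,Z\}\}$, then replace $\{X,Z\}$ by $Y+\mu$ via \eqref{BI:center3}, giving $\{X,Y\}+2\mu X$, and finally use \eqref{BI:center1} in the form $\{X,Y\}=Z+\kappa$ to obtain $Z+\kappa+2\mu X$; subtracting $Z$ yields $2\mu X+\kappa$.

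I do not expect a genuine obstacle here: the only subtlety is bookkeeping with the centrality of $\kappa$ and $\mu$ (to justify moving them past $X$ and to evaluate $\{X,\kappa\}=2\kappa X$), and correctly matching the two defining relations \eqref{BI:center1} and \eqref{BI:center3} to the correct anticommutators. The key structural insight that makes the proof short is recognizing that the seemingly cubic left-hand sides are in fact the nested anticommutators $\{X,\{X,Y\}\}$ and $\{X,\{X,Z\}\}$, after which the two BI relations reduce everything to degree one.
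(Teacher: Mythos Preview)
Your proof is correct and follows essentially the same approach as the paper: both eliminate one of $Y,Z$ from \eqref{BI:center1} and \eqref{BI:center3} using the other relation, with centrality of $\kappa,\mu$ doing the rest. Your packaging via the nested anticommutator $\{X,\{X,Y\}\}$ is a clean way to organize the same computation.
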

\begin{proof}
Relation (i) follows by eliminating $Z$ from \eqref{BI:center3} using \eqref{BI:center1}.
Relation (ii) follows by eliminating $Y$ from \eqref{BI:center1} using \eqref{BI:center3}.
\end{proof}

\begin{lem}
\label{lem:On(a,b,c)&LT}
Let $a,b,c\in \C$ and $n\geq 0$ be an even integer.  
Suppose that the $\BI$-module $O_n(a,b,c)$ is irreducible. Then the following conditions are equivalent: 
\begin{enumerate}
\item There exists a basis for $O_n(a,b,c)$ with respect to which the matrix representing $X$ is diagonal and the matrices representing $Y$ and $Z$ are irreducible tridiagonal.

\item $X$ is diagonalizable on $O_n(a,b,c)$.

\item $a\notin\{\frac{n-1}{2}-i\,|\,i=0,1,\ldots,n-1\}$.
\end{enumerate}
\end{lem}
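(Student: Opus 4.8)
The plan is to prove (i)$\Rightarrow$(ii)$\Leftrightarrow$(iii)$\Rightarrow$(i); the implication (i)$\Rightarrow$(ii) is immediate, since any basis as in (i) diagonalizes $X$. Everything rests on the explicit basis $\{u_i\}_{i=0}^n$ of Proposition~\ref{prop:BImodule_odddim}, in which $Xu_i=\theta_i u_i+u_{i+1}$, so that $X$ is lower bidiagonal with diagonal entries $\theta_0,\dots,\theta_n$, and these are precisely the eigenvalues of $X$. Here $\theta_i=(-1)^i\bigl(a-\tfrac n2+i\bigr)$.

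For (ii)$\Leftrightarrow$(iii), I would first observe from $u_{i+1}=(X-\theta_i)u_i$ that $u_0$ is a cyclic vector for $X$, whence $X$ is nonderogatory and its minimal polynomial is $\prod_{i=0}^n(t-\theta_i)$. Thus $X$ is diagonalizable if and only if this polynomial is squarefree, i.e. the $\theta_i$ are pairwise distinct. A direct check shows $\theta_i=\theta_j$ with $i\ne j$ can only happen for $i,j$ of opposite parity, and then $2a=n-(i+j)$; as the opposite-parity sums $i+j$ range over the odd integers in $[1,2n-1]$, the offending values of $a$ are exactly $\{\tfrac{n-1}2-k\mid k=0,\dots,n-1\}$. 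Hence distinctness of the $\theta_i$ is equivalent to (iii), giving (ii)$\Leftrightarrow$(iii).

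The substantive implication is (iii)$\Rightarrow$(i). Assuming (iii), $X$ has $n+1$ distinct eigenvalues; let $E_i$ denote the projection onto the $\theta_i$-eigenline. Since $\kappa,\mu$ act as scalars on $O_n(a,b,c)$, applying $E_i(\,\cdot\,)E_j$ to the relation of Lemma~\ref{lem:BI&LT}(i) and using $E_iX=\theta_iE_i=XE_i$ collapses, for $i\ne j$, to $\bigl((\theta_i+\theta_j)^2-1\bigr)E_iYE_j=0$. Hence $E_iYE_j=0$ unless $(\theta_i+\theta_j)^2=1$. The crux is to show this forces $|i-j|=1$: for $i,j$ of opposite parity $(\theta_i+\theta_j)^2=(i-j)^2$, which equals $1$ only when $|i-j|=1$; for $i,j$ of equal parity $(\theta_i+\theta_j)^2=(2a-n+i+j)^2$, and equality to $1$ would force $2a=n-(i+j)\pm1$, values which --- as $(i,j)$ runs over equal-parity pairs with $|i-j|\ge2$ --- fill out exactly the set excluded in (iii). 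Thus under (iii) no non-adjacent block survives and $Y$ is tridiagonal in the eigenbasis; the identical computation with Lemma~\ref{lem:BI&LT}(ii) makes $Z$ tridiagonal.

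It remains to upgrade \emph{tridiagonal} to \emph{irreducible tridiagonal}. Because $Z=\{X,Y\}-\kappa$ with $\kappa$ scalar, the subalgebra generated by $X$ and $Y$ acts irreducibly on $O_n(a,b,c)$. Ordering the eigenbasis as $w_0,\dots,w_n$, a vanishing subdiagonal entry of $Y$ would make the span of an initial segment $w_0,\dots,w_i$ invariant under $X$ (diagonal) and $Y$ (tridiagonal), contradicting irreducibility; a symmetric argument on terminal segments handles the superdiagonal, and likewise for $Z$. Hence all off-diagonal entries of $Y$ and $Z$ are nonzero, establishing (i). I expect the main obstacle to be the crux arithmetic in the third paragraph: matching the ``allowed'' positions $(\theta_i+\theta_j)^2=1$ for non-adjacent equal-parity indices precisely with the excluded values of $a$ in (iii). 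This bookkeeping is what turns the quadratic relation into genuine tridiagonality and is exactly what links all three conditions.
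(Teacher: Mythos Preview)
Your proposal is correct and follows essentially the same approach as the paper: both use the lower-bidiagonal form of $X$ in the $\{u_i\}$ basis to reduce (ii)$\Leftrightarrow$(iii) to distinctness of the $\theta_i$, and both extract tridiagonality of $Y,Z$ from the quadratic relations of Lemma~\ref{lem:BI&LT}, then invoke module irreducibility for matrix irreducibility. The only cosmetic difference is that where you sandwich the relation between projections $E_i(\cdot)E_j$ and do a parity case analysis on $(\theta_i+\theta_j)^2=1$, the paper applies the relation directly to an eigenvector $v_i$ and factors $(X+\theta_i)^2-1=(X-\theta_{i-1})(X-\theta_{i+1})$ using the built-in recurrence of the $\theta_i$, which packages the same arithmetic a bit more compactly (and shifts the bookkeeping to checking that the auxiliary values $\theta_{-1},\theta_{n+1}$ avoid the spectrum).
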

\begin{proof}
We use the notations from Proposition \ref{prop:BImodule_odddim} in the proof.

(i) $\Rightarrow$ (ii): Trivial.

(ii) $\Leftrightarrow$ (iii): With respect to the basis $\{u_i\}_{i=0}^n$ for $O_n(a,b,c)$, the matrix representing $X$ is lower bidiagonal, with diagonal entries $\{\theta_i\}_{i=0}^n$ and all subdiagonal entries equal to $1$. Consequently, the minimal polynomial of $X$ on $O_n(a,b,c)$ is 
$$
\prod_{i=0}^n (x-\theta_i).
$$
It follows that $X$ is diagonalizable if and only if the scalars $\{\theta_i\}_{i=0}
^n$ are pairwise distinct. The latter condition is equivalent to (iii).

(iii) $\Rightarrow$ (i): 
By (iii) all eigenvalues $\{\theta_i\}_{i=0}
^n$ of $X$ on $O_n(a,b,c)$ are simple. For $0\leq i\leq n$, let $v_i$ denote a  $\theta_i$-eigenvector of $X$ in $O_n(a,b,c)$. Then the vectors $\{v_i\}_{i=0}^n$ are a basis for $O_n(a,b,c)$. 
Set 
$$
\theta_{-1}=\frac{n}{2}-a+1,
\qquad 
\theta_{n+1}=-\frac{n}{2}-a-1.
$$
Applying both sides of Lemma \ref{lem:BI&LT}(i) to $v_i$ ($0\leq i\leq n$) shows that 
$$
(X-\theta_{i-1})(X-\theta_{i+1}) Y v_i
$$
is a scalar multiple of $v_i$. Since $\{\theta_i\}_{i=0}^n$ are pairwise distinct by (iii), $Yv_i$ is a linear combination of $v_{i-1},v_i,v_{i+1}$ for all $i=1,2,\ldots,n-1$. 
By construction, $\theta_{-1}$ and $\theta_{n+1}$ are not in $\{\theta_1,\theta_3,\ldots,\theta_{n-1}\}$. By (iii), $\theta_{-1}\notin \{\theta_2,\theta_4,\ldots,\theta_n\}$ and $\theta_{n+1}\notin \{\theta_0,\theta_2,\ldots,\theta_{n-2}\}$. 
It follows that $\theta_{-1}$ is not among the scalars $\{\theta_i\}_{i=1}^n$ and $\theta_{n+1}$ is not among $\{\theta_i\}_{i=0}^{n-1}$. Consequently, $Yv_0$ is a linear combination of $v_0$ and $v_1$, and $Yv_n$ is a linear combination of $v_{n-1}$ and $v_n$. Thus, the matrix representing $Y$ with respect to the basis $\{v_i\}_{i=0}^n$ for $O_n(a,b,c)$ is tridiagonal. Similarly, Lemma \ref{lem:BI&LT}(ii), together with condition (iii), implies the matrix representing $Z$ with respect to the same basis for $O_n(a,b,c)$ is tridiagonal. 
Since the $\BI$-module $O_n(a,b,c)$ is irreducible by assumption, these tridiagonal matrices are irreducible. The condition (i) follows.
\end{proof}

\begin{lem}
\label{lem:En(a,b,c)&LT}
Let $a,b,c\in \C$ and $n\geq 1$ be an odd integer.  
Suppose that the $\BI$-module $E_n(a,b,c)$ is irreducible. Then the following conditions are equivalent:
\begin{enumerate}
\item There exists a basis for $E_n(a,b,c)$ with respect to which the matrix representing $X$ is diagonal and the matrices representing $Y$ and $Z$ are irreducible tridiagonal.

\item $X$ is diagonalizable on $E_n(a,b,c)$.

\item $a\notin\{\frac{n-1}{2}-i\,|\,i=0,1,\ldots,n-1\}$.
\end{enumerate}
\end{lem}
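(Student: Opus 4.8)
The plan is to prove Lemma \ref{lem:En(a,b,c)&LT} by following closely the structure of the proof of Lemma \ref{lem:On(a,b,c)&LT}, since the two statements are formally identical and the underlying module descriptions in Proposition \ref{prop:BImodule_odddim} and Proposition \ref{prop:BImodule_evendim} share the same action of $X$ and the same formulas for the eigenvalues $\theta_i=\frac{(-1)^i(2a-n+2i)}{2}$. The implication (i) $\Rightarrow$ (ii) is trivial, and the equivalence (ii) $\Leftrightarrow$ (iii) goes through verbatim: with respect to the basis $\{u_i\}_{i=0}^n$ for $E_n(a,b,c)$, the operator $X$ is lower bidiagonal with diagonal entries $\{\theta_i\}_{i=0}^n$ and subdiagonal entries equal to $1$, so its minimal polynomial is $\prod_{i=0}^n(x-\theta_i)$, and $X$ is diagonalizable precisely when the $\theta_i$ are pairwise distinct. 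The only arithmetic to check is that pairwise distinctness of the $\theta_i$ is equivalent to condition (iii); this is the same computation as in the odd-dimensional case because the $\theta_i$ are given by the same formula.

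The substantive part is (iii) $\Rightarrow$ (i), and here I would reuse the eigenvector argument verbatim, since Lemma \ref{lem:BI&LT} holds in $\BI$ and is therefore available on any $\BI$-module. Assuming (iii), all eigenvalues $\{\theta_i\}_{i=0}^n$ of $X$ are simple; pick a $\theta_i$-eigenvector $v_i$ for each $i$, giving a basis. Setting $\theta_{-1}=\frac{n}{2}-a+1$ and $\theta_{n+1}=-\frac{n}{2}-a-1$, I would apply Lemma \ref{lem:BI&LT}(i) to $v_i$ to conclude that $(X-\theta_{i-1})(X-\theta_{i+1})Yv_i$ is a scalar multiple of $v_i$, forcing $Yv_i\in\Span\{v_{i-1},v_i,v_{i+1}\}$ for interior $i$. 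The boundary cases $i=0$ and $i=n$ require checking that $\theta_{-1}$ and $\theta_{n+1}$ do not collide with the relevant $\theta_i$: the values $\theta_{-1},\theta_{n+1}$ avoid $\{\theta_1,\theta_3,\ldots,\theta_{n-1}\}$ by their explicit form, and avoid the even-indexed $\theta_i$ by (iii). This yields that $Y$ is tridiagonal in the basis $\{v_i\}$, and Lemma \ref{lem:BI&LT}(ii) similarly gives $Z$ tridiagonal in the same basis. Irreducibility of these tridiagonal matrices follows from the assumed irreducibility of the $\BI$-module $E_n(a,b,c)$.

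The one point requiring genuine attention, rather than pure transcription, is the verification of the boundary index-collision claims and the (ii) $\Leftrightarrow$ (iii) arithmetic, because the parity bookkeeping in the $\theta_i$ (with the alternating sign $(-1)^i$) must be checked for the specific range $i=0,1,\ldots,n-1$ appearing in (iii) and for the auxiliary values $\theta_{-1},\theta_{n+1}$. Since $n$ is now odd rather than even, I would double-check that the index sets $\{\theta_1,\theta_3,\ldots,\theta_{n-1}\}$, $\{\theta_2,\theta_4,\ldots,\theta_n\}$, and $\{\theta_0,\theta_2,\ldots,\theta_{n-2}\}$ used in the odd-dimensional argument are replaced by the correct parity-adjusted sets for odd $n$; the logical structure of the argument is unchanged, but the endpoints of these ranges shift. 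I expect this parity check to be the main (and only mild) obstacle, as everything else transfers directly from the proof of Lemma \ref{lem:On(a,b,c)&LT}.
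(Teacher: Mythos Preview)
Your approach is exactly the paper's: the proof there reads ``The argument is essentially the same as that in the proof of Lemma~\ref{lem:On(a,b,c)&LT}, except for the following difference.'' But you have committed precisely the slip the paper flags. Because $n$ is odd, $(-1)^{n+1}=1$, so extending the formula $\theta_i=\frac{(-1)^i(2a-n+2i)}{2}$ to $i=n+1$ gives
\[
\theta_{n+1}=\frac{n}{2}+a+1,
\]
not $-\frac{n}{2}-a-1$ as you wrote (your value is the even-$n$ one). With the wrong value the factorization $X^2+2\theta_n X+\theta_n^2-1=(X-\theta_{n-1})(X-\theta_{n+1})$ underlying the application of Lemma~\ref{lem:BI&LT} fails at the boundary $i=n$, so the argument as you stated it would not yield $Yv_n\in\Span\{v_{n-1},v_n\}$.

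With the correct $\theta_{n+1}$ the collision analysis also flips relative to the even-$n$ case: now $\theta_{n+1}$ avoids $\{\theta_0,\theta_2,\ldots,\theta_{n-1}\}$ by construction and $\{\theta_1,\theta_3,\ldots,\theta_{n-2}\}$ by condition (iii), reversing the roles you sketched (and the odd-index set for $\theta_{-1}$ runs up to $\theta_n$, not $\theta_{n-1}$). Your instinct to re-examine the parity bookkeeping was right; recomputing $\theta_{n+1}$ is its actual content.
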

\begin{proof}
We use the notations from Proposition \ref{prop:BImodule_evendim}.  
The argument is essentially the same as that in the proof of Lemma \ref{lem:On(a,b,c)&LT}, except for the following difference.
In the present case 
$$
\theta_{-1}=\frac{n}{2}-a+1,
\qquad 
\theta_{n+1}=\frac{n}{2}+a+1.
$$
The reasons why $\theta_{-1}$ is excluded from $\{\theta_i\}_{i=1}^n$
and why $\theta_{n+1}$ is excluded from $\{\theta_i\}_{i=0}^{\,n-1}$ differ slightly.
By construction, $\theta_{-1}\notin \{\theta_1,\theta_3,\ldots,\theta_n\}$ and $\theta_{n+1}\notin \{\theta_0,\theta_2,\ldots,\theta_{n-1}\}$. By (iii), $\theta_{-1}\notin \{\theta_2,\theta_4,\ldots,\theta_{n-1}\}$ and $\theta_{n+1}\notin \{\theta_1,\theta_3,\ldots,\theta_{n-2}\}$. 
\end{proof}

There exists a unique $\Z/3\Z$-action on $\BI$ such that each element of $\Z/3\Z$ acts as the algebra automorphism of $\BI$ described in the following table.
\begin{table}[H]
\centering
\extrarowheight=3pt
\begin{tabular}{c||rrr|rrr}
$u$  
&$X$ &$Y$ &$Z$ &$\kappa$ & $\lambda$ &$\mu$
\\
\midrule[1pt]
$(0\bmod 3)(u)$
&$X$  &$Y$   &$Z$ &$\kappa$  &$\lambda$  &$\mu$ 
\\
$(1\bmod 3)(u)$
& $Y$  &$Z$ &$X$ &$\lambda$ &$\mu$ &$\kappa$ 
\\
$(2\bmod 3)(u)$
& $Z$    &$X$ &$Y$ &$\mu$ &$\kappa$ &$\lambda$ 
\end{tabular}
\caption{The $\Z/3\Z$-action on $\BI$}
\label{t:BI&Z/3Z}
\end{table}

\begin{lem}
\label{lem:On(a,b,c)&Z/3Z}
Let $a,b,c\in \C$ and $n\geq 0$ be an even integer. Then the following conditions are equivalent:
\begin{enumerate}
\item The $\BI$-module $O_n(a,b,c)$ is irreducible.

\item The $\BI$-module $O_n(c,a,b)$ is irreducible.

\item The $\BI$-module $O_n(b,c,a)$ is irreducible.
\end{enumerate}
Suppose that {\rm (i)}--{\rm (iii)} hold. Then the $\BI$-modules 
$$
O_n(a,b,c),
\qquad 
O_n(c,a,b)^{1\bmod 3},
\qquad 
O_n(b,c,a)^{2\bmod 3}
$$ are isomorphic.
\end{lem}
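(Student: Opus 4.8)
The plan is to obtain the equivalence of (i)--(iii) directly from the irreducibility criterion in Theorem \ref{thm:BImodule_odddim_classification}, and then to establish the three isomorphisms via the trace criterion of Lemma \ref{lem:BImodule_odddim_criterion} together with the trace formula of Lemma \ref{lem:BImodule_odddim_trace}.

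First I would record the key symmetry underlying the whole statement: the four scalars governing irreducibility in Theorem \ref{thm:BImodule_odddim_classification}, namely
$$
a+b+c,\qquad a-b-c,\qquad -a+b-c,\qquad -a-b+c,
$$
form a set that is invariant under the cyclic substitution $(a,b,c)\mapsto (c,a,b)$. Indeed, plugging $(c,a,b)$ into these four expressions fixes $a+b+c$ and permutes the remaining three in a $3$-cycle, so the resulting set of four scalars is unchanged. Hence the membership condition of Theorem \ref{thm:BImodule_odddim_classification} is identical for $O_n(a,b,c)$, $O_n(c,a,b)$, and $O_n(b,c,a)$, and the equivalence of (i), (ii), (iii) follows immediately.

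For the isomorphism statement, I would first note that twisting a module by an algebra automorphism preserves irreducibility, since a subspace is invariant under the twisted action if and only if it is invariant under the original one (the automorphism being surjective). Thus, assuming (i)--(iii), both $O_n(c,a,b)^{1\bmod 3}$ and $O_n(b,c,a)^{2\bmod 3}$ are $(n+1)$-dimensional irreducible $\BI$-modules, so Lemma \ref{lem:BImodule_odddim_criterion} applies to each. I would then read off the relevant traces from Table \ref{t:BI&Z/3Z} and Lemma \ref{lem:BImodule_odddim_trace}. On $O_n(c,a,b)^{1\bmod 3}$ the generator $X$ acts as $(1\bmod 3)(X)=Y$ acts on $O_n(c,a,b)$, whose trace is $a$; likewise $Y$ acts as $Z$ (trace $b$) and $Z$ acts as $X$ (trace $c$). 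Since the traces of $X,Y,Z$ on $O_n(c,a,b)^{1\bmod 3}$ are therefore $a,b,c$, Lemma \ref{lem:BImodule_odddim_criterion} gives $O_n(a,b,c)\cong O_n(c,a,b)^{1\bmod 3}$. An identical computation, with $(2\bmod 3)$ sending $X,Y,Z$ to $Z,X,Y$, shows that the traces of $X,Y,Z$ on $O_n(b,c,a)^{2\bmod 3}$ are again $a,b,c$, yielding the remaining isomorphism and completing the proof.

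I do not expect a genuine obstacle here; the computation is entirely mechanical once the tools are assembled. The only point demanding care is the bookkeeping of the twisting convention for $V^{\e}$, so that the traces of $X,Y,Z$ emerge as $a,b,c$ rather than as some permutation of them. This is precisely the step where Table \ref{t:BI&Z/3Z} and Lemma \ref{lem:BImodule_odddim_trace} must be combined in the correct order, but with the convention fixed the verification is routine.
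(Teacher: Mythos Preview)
Your proposal is correct and follows essentially the same approach as the paper: the equivalence of (i)--(iii) via the cyclic symmetry of the irreducibility criterion in Theorem~\ref{thm:BImodule_odddim_classification}, and the isomorphisms via Lemmas~\ref{lem:BImodule_odddim_trace} and~\ref{lem:BImodule_odddim_criterion} combined with Table~\ref{t:BI&Z/3Z}. The paper's own proof is a two-sentence sketch of exactly this argument; your version simply fills in the trace bookkeeping that the paper leaves as ``routine to verify.''
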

\begin{proof}
Recall the irreducibility criterion for the $\BI$-module $O_n(a,b,c)$ from Theorem \ref{thm:BImodule_odddim_classification}. The equivalence of (i)--(iii) is immediate from the irreducibility criterion. 
By Lemmas \ref{lem:BImodule_odddim_trace} and \ref{lem:BImodule_odddim_criterion}, it is routine to verify the second assertion.
\end{proof}

\begin{lem}
\label{lem:En(a,b,c)&Z/3Z}
Let $a,b,c\in \C$ and $n\geq 1$ be an odd integer. Then the following conditions are equivalent:
\begin{enumerate}
\item The $\BI$-module $E_n(a,b,c)$ is irreducible.

\item The $\BI$-module $E_n(c,a,b)$ is irreducible.

\item The $\BI$-module $E_n(b,c,a)$ is irreducible.
\end{enumerate}
Suppose that {\rm (i)}--{\rm (iii)} hold. Then the $\BI$-modules 
$$E_n(a,b,c),\qquad 
E_n(c,a,b)^{1\bmod 3},
\qquad E_n(b,c,a)^{2\bmod 3}
$$ are isomorphic.
\end{lem}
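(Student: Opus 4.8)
The plan is to mirror exactly the proof of Lemma \ref{lem:On(a,b,c)&Z/3Z}, since the statement is the even-dimensional analogue of that result. The structure is completely parallel: an equivalence of three irreducibility conditions, followed by an isomorphism assertion involving the $\Z/3\Z$-action from Table \ref{t:BI&Z/3Z}.

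For the equivalence of (i)--(iii), I would invoke the irreducibility criterion for $E_n(a,b,c)$ from Theorem \ref{thm:BImodule_evendim_classification}, which states that $E_n(a,b,c)$ is irreducible if and only if the four quantities $a+b+c$, $-a+b+c$, $a-b+c$, $a+b-c$ avoid a certain finite set. The key observation is that this set of four linear forms in $(a,b,c)$ is invariant under cyclic permutation of the arguments: sending $(a,b,c)\mapsto(c,a,b)$ permutes $\{a+b+c,-a+b+c,a-b+c,a+b-c\}$ among themselves (the first form is fixed, and the remaining three are cyclically permuted). Hence the irreducibility of $E_n(a,b,c)$, $E_n(c,a,b)$, and $E_n(b,c,a)$ are all governed by the same condition, giving the equivalence immediately.

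For the isomorphism assertion, the natural approach is to use the trace/scalar criterion from Lemma \ref{lem:BImodule_evendim_criterion}, exactly as Lemma \ref{lem:On(a,b,c)&Z/3Z} used Lemmas \ref{lem:BImodule_odddim_trace} and \ref{lem:BImodule_odddim_criterion} in the odd-dimensional case. The twist $E_n(c,a,b)^{1\bmod 3}$ means we apply the cyclic automorphism $(1\bmod 3)$ from Table \ref{t:BI&Z/3Z}, under which $X\mapsto Y$, $Y\mapsto Z$, $Z\mapsto X$ and $\kappa\mapsto\lambda$, $\lambda\mapsto\mu$, $\mu\mapsto\kappa$. I would check that the data identifying the isomorphism class under Lemma \ref{lem:BImodule_evendim_criterion} — namely the sign element $(\e,\e')$ determined by the traces of $X,Y$, together with the scalars by which $\kappa,\lambda,\mu$ act — agree for $O_n(a,b,c)$, $O_n(c,a,b)^{1\bmod 3}$, and $O_n(b,c,a)^{2\bmod 3}$ (reading $E_n$ throughout). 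By Lemma \ref{lem:BImodule_evendim_trace} the traces of $X,Y$ on each untwisted $E_n$ are $-\frac{n+1}{2}$; after the cyclic twist these are permuted but remain $-\frac{n+1}{2}$, so the sign data coincides, and the central scalars from Proposition \ref{prop:BImodule_evendim}(ii) are cyclically permuted in a way that matches the cyclic relabelling of $(a,b,c)$.

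The main subtlety, compared to the odd case, is that even-dimensional irreducible modules carry the extra $\{\pm1\}^2$-twist and the $\{\pm1\}^3$-orbit ambiguity recorded in Theorem \ref{thm:BImodule_evendim_classification}, so one must be careful that the cyclic $\Z/3\Z$-action interacts correctly with these sign symmetries rather than simply comparing bare parameters. Concretely, one verifies that applying $(1\bmod 3)$ to $E_n(c,a,b)$ produces a module whose isomorphism-class invariants — as extracted via Lemma \ref{lem:BImodule_evendim_criterion} — are precisely those of $E_n(a,b,c)$. I expect this bookkeeping of signs and central scalars under the cyclic permutation to be the only real point requiring care; once it is checked, the second assertion follows, and the argument closes in the same routine manner as Lemma \ref{lem:On(a,b,c)&Z/3Z}.
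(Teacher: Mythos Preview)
Your proposal is correct and follows exactly the paper's approach: the paper likewise deduces the equivalence of (i)--(iii) directly from the irreducibility criterion in Theorem~\ref{thm:BImodule_evendim_classification} (noting the cyclic invariance of the four linear forms), and handles the isomorphism assertion as a routine verification via Lemmas~\ref{lem:BImodule_evendim_trace} and~\ref{lem:BImodule_evendim_criterion}. Your discussion of the $\{\pm1\}^2$-twist bookkeeping is more explicit than the paper's, but the underlying argument is identical.
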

\begin{proof}
Recall the irreducibility criterion for the $\BI$-module $E_n(a,b,c)$ from Theorem \ref{thm:BImodule_evendim_classification}. The equivalence of (i)--(iii) is immediate from the irreducibility criterion. 
By Lemmas \ref{lem:BImodule_evendim_trace} and \ref{lem:BImodule_evendim_criterion}, it is routine to verify the second assertion.
\end{proof}

\begin{thm}
\label{thm:On(a,b,c)&LT}  
Let $a,b,c\in \C$ and $n\geq 0$ be an even integer. Suppose that the $\BI$-module $O_n(a,b,c)$ is irreducible.  Then the following conditions are equivalent:
\begin{enumerate}
\item $X,Y,Z$ act on $O_n(a,b,c)$ as a Leonard triple.

\item $X,Y,Z$ are diagonalizable on $O_n(a,b,c)$.

\item $a,b,c\notin\{\frac{n-1}{2}-i\,|\,i=0,1,\ldots,n-1\}$.
\end{enumerate}
\end{thm}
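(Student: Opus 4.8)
The plan is to deduce this three-fold statement from the single-operator result of Lemma~\ref{lem:On(a,b,c)&LT} by exploiting the cyclic symmetry recorded in Lemma~\ref{lem:On(a,b,c)&Z/3Z}. Under the standing irreducibility hypothesis, that lemma supplies the $\BI$-module isomorphisms $O_n(a,b,c)\cong O_n(c,a,b)^{1\bmod 3}\cong O_n(b,c,a)^{2\bmod 3}$, together with the irreducibility of $O_n(c,a,b)$ and $O_n(b,c,a)$. Since the automorphism $1\bmod 3$ (resp. $2\bmod 3$) of Table~\ref{t:BI&Z/3Z} cyclically permutes $X,Y,Z$, these isomorphisms let me transfer every assertion about the distinguished operator $X$ into the corresponding assertion about $Y$ or $Z$.

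First I would dispose of (i)$\Rightarrow$(ii): by Definition~\ref{defn:LT} a Leonard triple provides, for each of $X,Y,Z$, a basis in which that operator is diagonal, so all three are diagonalizable. Next I would establish (ii)$\Leftrightarrow$(iii) operator by operator. Diagonalizability of $X$ on $O_n(a,b,c)$ is equivalent to $a\notin\{\frac{n-1}{2}-i\mid i=0,\dots,n-1\}$ directly by Lemma~\ref{lem:On(a,b,c)&LT}. For $Y$, I would read off from $O_n(a,b,c)\cong O_n(b,c,a)^{2\bmod 3}$ that the operator $Y$ on $O_n(a,b,c)$ is carried to the operator $(2\bmod 3)(Y)=X$ on $O_n(b,c,a)$; hence $Y$ is diagonalizable on $O_n(a,b,c)$ if and only if $X$ is diagonalizable on $O_n(b,c,a)$, which by Lemma~\ref{lem:On(a,b,c)&LT} is equivalent to the first parameter $b$ avoiding the forbidden set. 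Likewise, using $O_n(a,b,c)\cong O_n(c,a,b)^{1\bmod 3}$ and $(1\bmod 3)(Z)=X$, diagonalizability of $Z$ on $O_n(a,b,c)$ matches that of $X$ on $O_n(c,a,b)$, i.e. the condition on $c$. Combining the three equivalences yields (ii)$\Leftrightarrow$(iii).

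Finally, for (iii)$\Rightarrow$(i) I would produce the three bases required by Definition~\ref{defn:LT}. Since $a$ avoids the forbidden set, Lemma~\ref{lem:On(a,b,c)&LT} already gives a basis of $O_n(a,b,c)$ in which $X$ is diagonal and $Y,Z$ are irreducible tridiagonal. For the remaining two, I would apply the same lemma to $O_n(b,c,a)$ (using $b$) and to $O_n(c,a,b)$ (using $c$), obtaining in each case a basis in which $X$ is diagonal and the other two generators are irreducible tridiagonal, and then transport these bases through the isomorphisms of Lemma~\ref{lem:On(a,b,c)&Z/3Z}. Under $2\bmod 3$ the diagonal $X$ becomes $Y$ and the tridiagonal pair becomes $Z,X$, giving the basis with $Y$ diagonal; under $1\bmod 3$ the diagonal $X$ becomes $Z$, giving the basis with $Z$ diagonal. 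All three conditions of Definition~\ref{defn:LT} then hold, so $X,Y,Z$ act as a Leonard triple.

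I expect the only real difficulty to be purely bookkeeping: correctly tracking, through the convention $u\mapsto\e(u)$ for the twisted module $V^{\e}$ and the cyclic permutations of Table~\ref{t:BI&Z/3Z}, which parameter among $a,b,c$ governs the diagonalizability of which operator, and which generator becomes diagonal after each twist. The genuine analytic content---the three-term recurrence forcing tridiagonality and the minimal-polynomial computation giving diagonalizability---is already contained in Lemma~\ref{lem:On(a,b,c)&LT}, so no new calculation beyond this symmetrization should be needed.
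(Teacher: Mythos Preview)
Your proposal is correct and follows essentially the same approach as the paper: both proofs invoke Definition~\ref{defn:LT} for (i)$\Rightarrow$(ii), then reduce (ii)$\Leftrightarrow$(iii) and (iii)$\Rightarrow$(i) to three applications of Lemma~\ref{lem:On(a,b,c)&LT} by transporting the operators $Y$ and $Z$ to $X$ on the cyclically relabeled modules $O_n(b,c,a)$ and $O_n(c,a,b)$ via Lemma~\ref{lem:On(a,b,c)&Z/3Z} and Table~\ref{t:BI&Z/3Z}. The only cosmetic difference is that the paper phrases the transport by twisting $O_n(a,b,c)$ itself (e.g.\ $O_n(a,b,c)^{1\bmod 3}\cong O_n(b,c,a)$) rather than using the isomorphisms $O_n(a,b,c)\cong O_n(b,c,a)^{2\bmod 3}$ as you do, but the bookkeeping you outline is accurate and leads to the same conclusions.
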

\begin{proof} 
(i) $\Rightarrow$ (ii): Immediate from Definition \ref{defn:LT}.

(ii) $\Rightarrow$ (iii) $\Rightarrow$ (i): 
By Lemma \ref{lem:On(a,b,c)&LT} the following conditions are equivalent:
\begin{enumerate}
\item[$\bullet$] $X$ is diagonalizable on $O_n(a,b,c)$.

\item[$\bullet$] $a\notin\{\frac{n-1}{2}-i\,|\,i=0,1,\ldots,n-1\}$.

\item[$\bullet$] There exists a basis for $O_n(a,b,c)$ with respect to which the matrix representing $X$ is diagonal and those representing $Y$ and $Z$ are irreducible tridiagonal.
\end{enumerate}

By Table \ref{t:BI&Z/3Z} the actions of $X,Y,Z$ on the $\BI$-module $O_n(a,b,c)^{1\bmod 3}$ correspond respectively to the actions of $Y,Z,X$ on the $\BI$-module $O_n(a,b,c)$. 
By Lemma \ref{lem:On(a,b,c)&Z/3Z} the $\BI$-module $O_n(a,b,c)^{1\bmod 3}$ is isomorphic to the $\BI$-module $O_n(b,c,a)$. Hence, combined with Lemma \ref{lem:On(a,b,c)&LT}, the following conditions are equivalent:
\begin{enumerate}
\item[$\bullet$] $Y$ is diagonalizable on $O_n(a,b,c)$.

\item[$\bullet$] $X$ is diagonalizable on $O_n(a,b,c)^{1\bmod 3}$.

\item[$\bullet$] $X$ is diagonalizable on $O_n(b,c,a)$.

\item[$\bullet$] $b\notin\{\frac{n-1}{2}-i\,|\,i=0,1,\ldots,n-1\}$.

\item[$\bullet$] There exists a basis for $O_n(a,b,c)$ with respect to which the matrix representing $Y$ is diagonal and those representing $Z$ and $X$ are irreducible tridiagonal.
\end{enumerate}
By a similar argument the following conditions are equivalent:
\begin{enumerate}
\item[$\bullet$] $Z$ is diagonalizable on $O_n(a,b,c)$.

\item[$\bullet$] $X$ is diagonalizable on $O_n(a,b,c)^{2\bmod 3}$.

\item[$\bullet$] $X$ is diagonalizable on $O_n(c,a,b)$.

\item[$\bullet$] $c\notin\{\frac{n-1}{2}-i\,|\,i=0,1,\ldots,n-1\}$.

\item[$\bullet$] There exists a basis for $O_n(a,b,c)$ with respect to which the matrix representing $Z$ is diagonal and those representing $X$ and $Y$ are irreducible tridiagonal. 
\end{enumerate}
Therefore the implications (ii) $\Rightarrow$ (iii) and (iii) $\Rightarrow$ (i) follow.
\end{proof}

\begin{thm}
\label{thm:En(a,b,c)&LT}  
Let $a,b,c\in \C$ and $n\geq 1$ be an odd integer. Suppose that the $\BI$-module $E_n(a,b,c)$ is irreducible.  Then the following conditions are equivalent:
\begin{enumerate}
\item $X,Y,Z$ act on $E_n(a,b,c)$ as a Leonard triple.

\item $X,Y,Z$ are diagonalizable on $E_n(a,b,c)$.

\item $a,b,c\notin\{\frac{n-1}{2}-i\,|\,i=0,1,\ldots,n-1\}$.
\end{enumerate}
\end{thm}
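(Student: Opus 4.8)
The plan is to mirror the proof of Theorem \ref{thm:On(a,b,c)&LT} line for line, replacing each odd-dimensional ingredient by its even-dimensional counterpart. The implication (i) $\Rightarrow$ (ii) is immediate from Definition \ref{defn:LT}, since a Leonard triple requires each of its three operators to be diagonalizable. For the remaining implications I would organize the argument as a single cycle of equivalences (ii) $\Rightarrow$ (iii) $\Rightarrow$ (i) driven by the $\Z/3\Z$-symmetry recorded in Table \ref{t:BI&Z/3Z}.

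First I would invoke Lemma \ref{lem:En(a,b,c)&LT}, which already establishes on the irreducible module $E_n(a,b,c)$ the equivalence of three statements: that $X$ is diagonalizable, that $a\notin\{\frac{n-1}{2}-i\mid i=0,1,\ldots,n-1\}$, and that there exists a basis rendering $X$ diagonal with $Y,Z$ irreducible tridiagonal. To transfer this to $Y$, I would use the $1\bmod 3$ twist: by Table \ref{t:BI&Z/3Z} the actions of $X,Y,Z$ on $E_n(a,b,c)^{1\bmod 3}$ are the actions of $Y,Z,X$ on $E_n(a,b,c)$, and by Lemma \ref{lem:En(a,b,c)&Z/3Z} the module $E_n(a,b,c)^{1\bmod 3}$ is isomorphic to $E_n(b,c,a)$. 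Hence ``$Y$ is diagonalizable on $E_n(a,b,c)$'' is equivalent to ``$X$ is diagonalizable on $E_n(b,c,a)$'', which by Lemma \ref{lem:En(a,b,c)&LT} is equivalent to $b\notin\{\frac{n-1}{2}-i\mid i=0,1,\ldots,n-1\}$ and to the existence of a basis diagonalizing $Y$ with $Z,X$ irreducible tridiagonal. The parallel argument with the $2\bmod 3$ twist and the isomorphism $E_n(a,b,c)^{2\bmod 3}\cong E_n(c,a,b)$ handles $Z$ and $c$.

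Assembling these three chains of equivalences, condition (ii) is equivalent to the conjunction of the three scalar exclusions in (iii), while (iii) in turn supplies the three diagonal/tridiagonal bases demanded by the three clauses of Definition \ref{defn:LT}, thereby establishing (i). I expect no genuine obstacle: the sole point at which the even-dimensional case diverges from the odd-dimensional one -- namely the precise reasons that the boundary eigenvalues $\theta_{-1}$ and $\theta_{n+1}$ avoid the spectrum of $X$ -- has already been absorbed into the proof of Lemma \ref{lem:En(a,b,c)&LT}. Consequently the present theorem follows formally from that lemma together with the $\Z/3\Z$-equivariance furnished by Lemma \ref{lem:En(a,b,c)&Z/3Z}, with no new computation required.
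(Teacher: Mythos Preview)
Your proposal is correct and matches the paper's own approach exactly: the paper's proof simply states that, relying on Lemmas \ref{lem:En(a,b,c)&LT} and \ref{lem:En(a,b,c)&Z/3Z}, the argument is analogous to that of Theorem \ref{thm:On(a,b,c)&LT}. Your outlined cycle (i) $\Rightarrow$ (ii) $\Rightarrow$ (iii) $\Rightarrow$ (i) via the $\Z/3\Z$-twists is precisely that analogous argument spelled out.
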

\begin{proof} 
Relying on Lemmas \ref{lem:En(a,b,c)&LT} and \ref{lem:En(a,b,c)&Z/3Z}, the proof is analogous to that of Theorem \ref{thm:On(a,b,c)&LT}.
\end{proof}

\section{Proof for Theorem \ref{thm:V(1)&LT}}
\label{s:proofV(1)&LT}

Recall from Section \ref{s:UZ/2Zmodule} the $\U_{\Z/2\Z}$-modules $L_n^{\pm}$ for all $n \in \N$.
Let $V$ be a $\U_{\Z/2\Z}^{\otimes 2}$-module.
For any $\theta \in \C$, the subspace $V(\theta)$ of $V$ is defined in \eqref{V(theta)}. By Theorem \ref{thm:V(1)} the space $V(1)$ acquires a $\BI$-module structure.

\begin{lem}
\label{lem1:Lmn}
For any $m,n\in \N$ the vectors 
$$
v_i^{(m)}\otimes v_j^{(n)}
\qquad 
(0\leq i\leq m,\; 0\leq j\leq n,\; 2(i+j)=m+n-1)
$$ 
form a basis for $(L_m^+\otimes L_n^+)(1)$. 
\end{lem}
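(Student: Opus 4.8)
The plan is to diagonalize $\Delta(H)$ on the standard tensor basis of $L_m^+\otimes L_n^+$ and simply read off which basis vectors fall into the eigenspace $(L_m^+\otimes L_n^+)(1)$. First I would recall that, since $\{v_i^{(m)}\}_{i=0}^m$ and $\{v_j^{(n)}\}_{j=0}^n$ are bases for $L_m^+$ and $L_n^+$ respectively, the tensor products $v_i^{(m)}\otimes v_j^{(n)}$ with $0\le i\le m$ and $0\le j\le n$ form a basis for $L_m^+\otimes L_n^+$ as a $\U_{\Z/2\Z}^{\otimes 2}$-module.

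Next, using the comultiplication formula \eqref{DeltaH} together with the action of $H$ given in \eqref{Ln:H}, I would compute directly that
$$
\Delta(H)\bigl(v_i^{(m)}\otimes v_j^{(n)}\bigr)
=\bigl(m+n-2(i+j)\bigr)\,v_i^{(m)}\otimes v_j^{(n)}.
$$
Thus every tensor basis vector is a $\Delta(H)$-eigenvector, with eigenvalue $m+n-2(i+j)$. By the defining condition \eqref{V(theta)} of $V(1)$, such a vector lies in $(L_m^+\otimes L_n^+)(1)$ exactly when $m+n-2(i+j)=1$, that is, when $2(i+j)=m+n-1$, which is precisely the constraint in the statement.

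Finally, since the whole tensor basis consists of $\Delta(H)$-eigenvectors, the eigenspace $(L_m^+\otimes L_n^+)(1)$ is spanned by exactly those basis vectors whose eigenvalue equals $1$; being a subset of a basis, these vectors are automatically linearly independent, and hence form a basis for $(L_m^+\otimes L_n^+)(1)$. I do not expect any genuine obstacle here: the argument is a one-line diagonalization on an eigenbasis. The only point worth flagging is a parity remark, namely that when $m+n$ is even no index pair $(i,j)$ satisfies $2(i+j)=m+n-1$, in which case the indexing set is empty and $(L_m^+\otimes L_n^+)(1)=0$, so the empty set is the (vacuous) basis and the statement still holds.
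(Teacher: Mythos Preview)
Your proposal is correct and is essentially the same as the paper's own proof, which simply states that the lemma is immediate from \eqref{DeltaH} and \eqref{Ln:H}; you have just written out the details of that diagonalization explicitly.
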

\begin{proof}
Immediate from \eqref{DeltaH} and \eqref{Ln:H}.
\end{proof}

\begin{lem}
\label{lem2:Lmn}
For any $m,n\in \N$ the following conditions are equivalent:
\begin{enumerate}
\item $(L_m^+\otimes L_n^+)(1)$ is nonzero.

\item $m+n$ is odd.
\end{enumerate}
\end{lem}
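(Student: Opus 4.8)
The plan is to characterize exactly when the weight space $(L_m^+\otimes L_n^+)(1)$ contains a nonzero vector, using the explicit basis provided by Lemma \ref{lem1:Lmn}. That lemma tells us that $(L_m^+\otimes L_n^+)(1)$ is spanned by the tensors $v_i^{(m)}\otimes v_j^{(n)}$ subject to the single constraint $2(i+j)=m+n-1$, together with the range conditions $0\le i\le m$ and $0\le j\le n$. So the whole question reduces to deciding when there exists at least one admissible pair $(i,j)$ of nonnegative integers satisfying all three conditions simultaneously.

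First I would observe that the defining equation $2(i+j)=m+n-1$ immediately imposes a parity obstruction: since the left-hand side $2(i+j)$ is even, a solution in integers can exist only if $m+n-1$ is even, that is, only if $m+n$ is odd. This gives the implication (i) $\Rightarrow$ (ii) essentially for free, since a nonempty basis forces $m+n-1$ to be even.

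For the converse (ii) $\Rightarrow$ (i), I would suppose $m+n$ is odd and exhibit an explicit admissible pair $(i,j)$. Writing $s=(m+n-1)/2$, which is a nonnegative integer since $m+n\ge 1$ is odd, I need nonnegative integers $i\le m$ and $j\le n$ with $i+j=s$. The existence of such a pair is a routine counting fact: one can take $i=\min(m,s)$ and $j=s-i$, then check that $0\le j\le n$ follows from $s=(m+n-1)/2\le m+n$ and $s\le\max(m,n)\le m+n$; more carefully, since $s\le m+n-1< m+n$ and $s\ge 0$, the interval of valid $i$-values, namely $\max(0,s-n)\le i\le\min(m,s)$, is nonempty. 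I would verify that $\max(0,s-n)\le\min(m,s)$ by comparing the four cases, the key inequality being $s-n\le m$, which is equivalent to $s\le m+n$ and hence to $(m+n-1)/2\le m+n$, always true. Thus at least one basis vector exists and $(L_m^+\otimes L_n^+)(1)$ is nonzero.

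The argument is almost entirely bookkeeping, so I do not anticipate a genuine obstacle; the only point requiring a little care is confirming that the range constraints $0\le i\le m$ and $0\le j\le n$ can be met simultaneously rather than just the parity being correct. I would package this as the verification that the integer interval $[\max(0,s-n),\,\min(m,s)]$ is nonempty whenever $s=(m+n-1)/2$ is a nonnegative integer, which is the sole nontrivial step.
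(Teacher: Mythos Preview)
Your proposal is correct and follows the same route as the paper, which simply records the result as ``Immediate from Lemma~\ref{lem1:Lmn}.'' You have merely spelled out the bookkeeping behind that word ``immediate'': the parity obstruction gives (i)~$\Rightarrow$~(ii), and the nonemptiness of the index interval gives (ii)~$\Rightarrow$~(i).
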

\begin{proof}
Immediate from Lemma \ref{lem1:Lmn}.
\end{proof}

\begin{lem}
\label{lem3:Lmn}
For any $m,n\in \N$ with $m\leq n$ and $m+n$ odd, the following statements hold:
\begin{enumerate}
\item The $\BI$-module $(L_m^+\otimes L_n^+)(1)$ is irreducible.

\item $X,Y,Z$ act on $(L_m^+\otimes L_n^+)(1)$ as a Leonard triple.
\end{enumerate}
\end{lem}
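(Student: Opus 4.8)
The plan is to pin down the isomorphism type of the $\BI$-module $W:=(L_m^+\otimes L_n^+)(1)$ among the standard modules of Section~\ref{s:BImodule}, and then read off both conclusions from the Leonard-triple criteria of Theorems~\ref{thm:On(a,b,c)&LT} and \ref{thm:En(a,b,c)&LT}. By Lemma~\ref{lem1:Lmn} a basis of $W$ is $\{e_i:=v_i^{(m)}\otimes v_{s-i}^{(n)}\}$ with $s=(m+n-1)/2$; since $m\le n$ and $m+n$ odd force $m<n$, the index runs over $0\le i\le m$, so $\dim W=m+1$. Thus $W$ is odd-dimensional when $m$ is even and even-dimensional when $m$ is odd, matching the two cases of Propositions~\ref{prop:BImodule_odddim} and \ref{prop:BImodule_evendim}.

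First I would prove part (i) directly, because the trace criteria of Lemmas~\ref{lem:BImodule_odddim_criterion} and \ref{lem:BImodule_evendim_criterion} presuppose irreducibility and so cannot be used to establish it. Using \eqref{Y:V(1)} and \eqref{Ln:H}, the operator $Y$ is diagonal in $\{e_i\}$ with eigenvalues $m-\tfrac12-2i$ $(0\le i\le m)$, which are pairwise distinct, so $Y$ has simple spectrum. From the identity $X^2=EF\otimes 1+E\otimes F+F\otimes E+1\otimes EF$ obtained in the proof of Lemma~\ref{lem:klm&Casimir_V(1)}, the operator $X^2$ is tridiagonal in $\{e_i\}$, and a short evaluation of the $E\otimes F$ and $F\otimes E$ terms via \eqref{Ln:E}--\eqref{Ln:F} shows that every sub- and super-diagonal entry is nonzero; that is, $X^2$ is irreducible tridiagonal. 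Any $\BI$-submodule of $W$ is $Y$-invariant, hence (simple spectrum) is the span of a subset of $\{e_i\}$, and is also $X^2$-invariant; an irreducible tridiagonal operator admits no such coordinate subspace other than $0$ and $W$. This yields (i).

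For the identification I would extract the diagonal entries of $X,Y,Z$ in $\{e_i\}$ from \eqref{X:V(1)}--\eqref{Z:V(1)}, using \eqref{Deltarho}; this gives $\mathrm{tr}\,Y=-\tfrac{m+1}{2}$ together with $(\mathrm{tr}\,X,\mathrm{tr}\,Z)=(\tfrac{n+1}{2},-\tfrac{n+1}{2})$ when $m$ is even and $(\tfrac{m+1}{2},\tfrac{m+1}{2})$ when $m$ is odd. The central scalars come from Lemma~\ref{lem:klm&Casimir_V(1)}: $\kappa,\lambda$ act as $0$ and $\mu$ as $\tfrac{(m-n)(m+n+2)}{2}$. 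When $m$ is even, Lemma~\ref{lem:BImodule_odddim_criterion} then gives $W\cong O_m(\tfrac{n+1}{2},-\tfrac{m+1}{2},-\tfrac{n+1}{2})$. When $m$ is odd, the sign mismatch of the traces forces the twist $(\e,\e')=(-1,1)$ via Table~\ref{t:BI&pm1^2}, and solving the three scalar equations of Lemma~\ref{lem:BImodule_evendim_criterion} yields $W\cong E_m(\tfrac{n+1}{2},\tfrac{m+1}{2},\tfrac{n+1}{2})^{(-1,1)}$.

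Finally, part (ii) follows from Theorem~\ref{thm:On(a,b,c)&LT} (resp. \ref{thm:En(a,b,c)&LT}): since $W$ is irreducible by (i), so is the model module, and it remains only to verify condition (iii) there, namely that each of $a,b,c$ avoids $\{\tfrac{m-1}{2}-i\mid 0\le i\le m-1\}$. The parameters $a,c=\pm\tfrac{n+1}{2}$ have the opposite integrality type to this forbidden set (half-integers when $m$ is even, integers when $m$ is odd), so they avoid it; the remaining parameter $b=\mp\tfrac{m+1}{2}$ lies strictly outside the range of the set, so it avoids it as well. In the odd case the twist $(-1,1)$ merely changes the signs of $X$ and $Z$, which preserves the Leonard-triple property, so the conclusion transfers back to $W$. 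I expect the main obstacle to be the non-circular proof of irreducibility in (i)---isolating the irreducible-tridiagonal structure of $X^2$ so that the trace criteria may legitimately be invoked afterward---together with the careful sign bookkeeping needed to fix $(\e,\e')$ and the parameters in the even-dimensional case.
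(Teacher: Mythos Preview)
Your proposal is correct and follows the paper's overall strategy: establish irreducibility of $W=(L_m^+\otimes L_n^+)(1)$ directly, pin down its isomorphism type via the trace/central-scalar criteria of Lemmas~\ref{lem:BImodule_odddim_criterion} and \ref{lem:BImodule_evendim_criterion}, and then invoke Theorems~\ref{thm:On(a,b,c)&LT} and \ref{thm:En(a,b,c)&LT}. Your identification of $W$ with $O_m\!\left(\tfrac{n+1}{2},-\tfrac{m+1}{2},-\tfrac{n+1}{2}\right)$ for $m$ even and with $E_m\!\left(\tfrac{n+1}{2},\tfrac{m+1}{2},\tfrac{n+1}{2}\right)^{(-1,1)}$ for $m$ odd agrees exactly with the paper, as does the verification of condition~(iii) in the Leonard-triple theorems.

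The one genuine tactical difference is your irreducibility argument. The paper works with the basis $w_i=v_{m-i}^{(m)}\otimes v_{(n-m-1)/2+i}^{(n)}$ (your $e_{m-i}$) and computes the action of $X$ itself, which sends $w_i$ to a combination of $w_{m-i}$ and $w_{m-i+1}$; it then argues in two steps that $w_j\in W$ forces $w_{m-j},w_{m-j+1}\in W$, hence $w_{j-1},w_{j+1}\in W$. Your route---observing that $X^2$ is irreducible tridiagonal in $\{e_i\}$ while $Y$ has simple spectrum---is cleaner and avoids the index-reflection bookkeeping that $\Delta(\rho)$ introduces, at the cost of importing the $X^2$ formula from Lemma~\ref{lem:klm&Casimir_V(1)}. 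A second minor difference: the paper obtains $\mathrm{tr}\,Z$ from $\mathrm{tr}\,XY$ via $Z=\{X,Y\}-\kappa$ and $\kappa=0$, whereas you read off the diagonal entries of $Z$ directly; both give the same values.
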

\begin{proof}
Since $m\leq n$ and $m+n$ is odd, it follows from Lemma \ref{lem1:Lmn} that the vectors 
$$
w_i=v_{m-i}^{(m)}\otimes v^{(n)}_{\frac{n-m-1}{2}+i}
\qquad 
(0\leq i\leq m)
$$
are a basis for $(L_m^+\otimes L_n^+)(1)$. For notational convenience, let $w_{-1}$ and $w_{m+1}$ denote the zero vector of $(L_m^+\otimes L_n^+)(1)$ in this proof. 
Recall from \eqref{X:V(1)} and \eqref{Y:V(1)} the actions of $X$ and $Y$ on  $(L_m^+\otimes L_n^+)(1)$.
Using \eqref{DeltaE}, \eqref{Deltarho}, \eqref{Ln:E},  \eqref{Ln:H} and \eqref{L+&L-:rho} a direct computation yields 
\begin{align}
X w_i &=\left(\frac{m+n+1}{2}-i \right) w_{m-i} +i\, w_{m-i+1}
\quad 
(0\leq i\leq m),
\label{X:Lmn}
\\
Y w_i &=\left(2i-m-\frac{1}{2}\right) w_i
\quad 
(0\leq i\leq m).
\label{Y:Lmn}
\end{align}

Let $W$ be a $\BI$-submodule of $(L_m^+\otimes L_n^+)(1)$.
We claim that if there is an integer $j$ with $0\leq j\leq m$ such that $w_j\in W$, then $w_{j-1}\in W$ and $w_{j+1}\in W$. Since $w_j\in W$ and $W$ is a $\BI$-module, the vector $Xw_j\in W$ and hence the right-hand side of (\ref{X:Lmn}) lies in $W$.
By (\ref{Y:Lmn}) the vectors $\{w_i\}_{i=0}^m$ are eigenvectors of $Y$ corresponding to distinct eigenvalues. It follows that 
$$
\left(\frac{m+n+1}{2}-j \right) w_{m-j}\in W;
\qquad 
j\, w_{m-j+1}\in W.
$$
If $j\neq 0$ then $j\, w_{m-j+1}\in W$ implies $w_{m-j+1}\in W$; if $j=0$ then $w_{m-j+1}=w_{m+1}=0\in W$. Hence 
$w_{m-j+1}\in W$ in either case.
Since $m\leq n$ and $0\leq j\leq m$, the scalar $\frac{m+n+1}{2}-j\neq 0$ and hence $w_{m-j}\in W$. So far we have seen that if $w_j\in W$ then $w_{m-j+1}\in W$ and $w_{m-j}\in W$. Applying this fact again, the claim follows.
To see (i), we assume that $W$ is nonzero and show that $W=(L_m^+\otimes L_n^+)(1)$. Since the eigenvalues of $Y$ on $(L_m^+\otimes L_n^+)(1)$ are pairwise distinct, there exists an integer $j$ with $0\leq j\leq m$ such that $w_j\in W$. Repeatedly applying the above claim, 
it follows that all vectors $\{w_i\}_{i=0}^m$ lie in $W$. The statement (i) follows.

By \eqref{X:Lmn} the trace of $X$ on $(L_m^+\otimes L_n^+)(1)$ is equal to 
\begin{align*}
\left\{
\begin{array}{ll}
\displaystyle\frac{n+1}{2} \qquad &\hbox{if $m$ is even},
\\
\displaystyle\frac{m+1}{2} \qquad &\hbox{if $m$ is odd}.
\end{array}
\right.
\end{align*}
By \eqref{Y:Lmn} the trace of $Y$ on $(L_m^+\otimes L_n^+)(1)$ is equal to $-\frac{m+1}{2}$. By \eqref{X:Lmn} and \eqref{Y:Lmn} the trace of $XY$ on $(L_m^+\otimes L_n^+)(1)$ is equal to 
\begin{align*}
\left\{
\begin{array}{ll}
-\displaystyle\frac{n+1}{4} \qquad &\hbox{if $m$ is even},
\\
\displaystyle\frac{m+1}{4} \qquad &\hbox{if $m$ is odd}.
\end{array}
\right.
\end{align*}
Recall from Section \ref{s:UZ/2Zmodule} that $\Lambda$ acts on $L_m^+$ and $L_n^+$ as scalar multiplication by $\frac{m(m+2)}{2}$ and $\frac{n(n+2)}{2}$, respectively. 
By Lemma \ref{lem:klm&Casimir_V(1)}(i) the elements $\kappa,\lambda,\mu$ act on $(L_m^+\otimes L_n^+)(1)$ as scalar multiplication by $0,0,\frac{m(m+2)}{2}-\frac{n(n+2)}{2}$, respectively. 
Using (\ref{BI:center1}) the trace of $Z$ on $(L_m^+\otimes L_n^+)(1)$ equals 
\begin{align*}
\left\{
\begin{array}{ll}
\displaystyle-\frac{n+1}{2} \qquad &\hbox{if $m$ is even},
\\
\displaystyle\frac{m+1}{2} \qquad &\hbox{if $m$ is odd}.
\end{array}
\right.
\end{align*}
By Lemmas \ref{lem:BImodule_odddim_criterion} and \ref{lem:BImodule_evendim_criterion} the irreducible $\BI$-module $(L_m^+\otimes L_n^+)(1)$ is isomorphic to 
\begin{align*}
\left\{
\begin{array}{ll}
\displaystyle O_m\left(\frac{n+1}{2},-\frac{m+1}{2},-\frac{n+1}{2}\right)
\qquad &\hbox{if $m$ is even},
\\
\displaystyle E_m\left(\frac{n+1}{2},\frac{m+1}{2},\frac{n+1}{2}\right)^{(-1,1)}
\qquad &\hbox{if $m$ is odd}.
\end{array}
\right.
\end{align*}
The statement (ii) now follows from Theorems \ref{thm:On(a,b,c)&LT} and \ref{thm:En(a,b,c)&LT}.
\end{proof}

Let $D_4$ denote the dihedral group of symmetries of a square. Recall that $D_4$ admits a presentation with generators $\sigma,\tau$ and the relations 
\begin{gather*}
\sigma^2=1,
\qquad 
\tau^4=1,
\qquad 
(\sigma\tau)^2=1.
\end{gather*}
Recall the $\{\pm 1\}^2$-action on $\BI$ from Section \ref{s:BImodule}. 
There exists a unique group homomorphism $\phi:D_4\to \{\pm 1\}^2$ that sends
\begin{eqnarray}
\label{phi}
\sigma &\mapsto & (1,-1),
\qquad 
\tau \;\;\mapsto \;\; (-1,-1).
\end{eqnarray}
By Theorem \ref{thm:UZ/2Z_presentation} there exists a unique $D_4$-action on $\U_{\Z/2\Z}^{\otimes 2}$ such that $\sigma$ and $\tau$ act as the algebra automorphisms of $\U_{\Z/2\Z}^{\otimes 2}$ described in the following table:
 \begin{table}[H]
\centering
\extrarowheight=3pt
\begin{tabular}{c||cccc|cccc}
$u$  
&$E\otimes 1$ &$F\otimes 1$ &$H\otimes 1$ &$\rho\otimes 1$
&$1\otimes E$ &$1\otimes F$ &$1\otimes H$ &$1\otimes \rho$
\\

\midrule[1pt]

$\sigma(u)$
&$1\otimes E$ &$1\otimes F$ &$1\otimes H$ &$1\otimes \rho$
&$E\otimes 1$ &$F\otimes 1$ &$H\otimes 1$ &$\rho\otimes 1$
\\

$\tau(u)$
&$1\otimes E$ &$1\otimes F$ &$1\otimes H$ &$1\otimes \rho$
&$E\otimes 1$ &$F\otimes 1$ &$H\otimes 1$ &$-\rho\otimes 1$
\end{tabular}
\caption{The $D_4$-action on $\U_{\Z/2\Z}^{\otimes 2}$}
\label{t:UZ/2Z&D4}
\end{table}

\begin{lem}
\label{lem:K4&D4}
For any $\U_{\Z/2\Z}^{\otimes 2}$-module $V$ and any $g\in D_4$, the $\BI$-module $V^g(1)$ coincides with the $\BI$-module $V(1)^{\phi(g)}$.
\end{lem}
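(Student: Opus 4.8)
The plan is to prove that the two modules are literally equal, not merely isomorphic: they share the underlying space $V(1)$, and the three generators $X,Y,Z$ of $\BI$ act by the same operators on each. Throughout, write $X_0,Y_0,Z_0$ for the elements of $\U_{\Z/2\Z}^{\otimes 2}$ appearing on the right-hand sides of \eqref{X:V(1)}--\eqref{Z:V(1)}, and recall that in a module twisted by an algebra automorphism each element acts through the untwisted operator attached to its image under that automorphism.

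First I would match the underlying vector spaces. The subspace $V^g(1)$ is the eigenspace for eigenvalue $1$ of $\Delta(H)$ computed in the $V^g$-structure, i.e.\ the eigenspace of the operator $g(\Delta(H))$ on $V$. By Table \ref{t:UZ/2Z&D4} both $\sigma$ and $\tau$ interchange $H\otimes 1$ and $1\otimes H$, hence fix $\Delta(H)=H\otimes 1+1\otimes H$; since $\sigma,\tau$ generate $D_4$, every $g\in D_4$ fixes $\Delta(H)$. Therefore $V^g(1)=V(1)$ as subspaces of $V$, which settles the vector-space part.

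Next I would reduce the agreement of the $\BI$-module structures to identities in $\U_{\Z/2\Z}^{\otimes 2}$. As $\BI$ is generated by $X,Y,Z$, it suffices that each acts by the same operator on both modules. On $V^g(1)$ the operator for $X$ is $g(X_0)$, whereas on $V(1)^{\phi(g)}$ it is $\phi(g)(X)=s_X(g)\,X_0$, where $s_X(g)\in\{\pm1\}$ is the sign that Table \ref{t:BI&pm1^2} attaches to $X$ under $\phi(g)$; similarly for $Y,Z$ with signs $s_Y(g),s_Z(g)$. Reading Table \ref{t:BI&pm1^2} gives $s_X(\e,\e')=\e$, $s_Y(\e,\e')=\e'$, and $s_Z(\e,\e')=\e\e'$, so each of $s_X,s_Y,s_Z\colon D_4\to\{\pm1\}$ is a group homomorphism (a character of $\{\pm1\}^2$ precomposed with $\phi$). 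A semi-invariance relation $g(X_0)=s_X(g)X_0$ is then preserved under products, since scalars are $g$-fixed and $s_X$ is multiplicative; the same holds for $Y_0,Z_0$. Hence it is enough to verify
\[
g(X_0)=s_X(g)\,X_0,\qquad g(Y_0)=s_Y(g)\,Y_0,\qquad g(Z_0)=s_Z(g)\,Z_0
\]
for the two generators $g=\sigma$ and $g=\tau$.

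Finally I would carry out these six verifications by direct computation. Expanding with \eqref{DeltaE} and \eqref{Deltarho} gives $X_0=E\rho\otimes\rho+\rho\otimes E\rho$ and $Z_0=E\rho\otimes\rho-\rho\otimes E\rho$, together with $Y_0=\frac{1}{2}(H\otimes 1-1\otimes H)$. Applying $\sigma$ (the flip) monomial by monomial yields $\sigma(X_0)=X_0$, $\sigma(Y_0)=-Y_0$, $\sigma(Z_0)=-Z_0$, which matches $\phi(\sigma)=(1,-1)$; applying $\tau$ yields $\tau(X_0)=-X_0$, $\tau(Y_0)=-Y_0$, $\tau(Z_0)=Z_0$, matching $\phi(\tau)=(-1,-1)$ from \eqref{phi}. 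The one delicate point, and essentially the only obstacle, is the $\tau$-computation: by Table \ref{t:UZ/2Z&D4} the map $\tau$ sends $1\otimes\rho\mapsto -\rho\otimes 1$ while $\rho\otimes 1\mapsto 1\otimes\rho$ carries no sign, so when each monomial $E\rho\otimes\rho$ and $\rho\otimes E\rho$ is rewritten as a product of generators one must track this asymmetric sign through every occurrence of $\rho$. Once these identities are established, the reduction of the previous paragraph delivers the claim for all $g\in D_4$.
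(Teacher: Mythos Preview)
Your proof is correct and follows essentially the same approach as the paper's own proof: verify by direct computation, using Table~\ref{t:UZ/2Z&D4}, that the actions of $X,Y,Z$ on $V^g(1)$ match those of $\phi(g)(X),\phi(g)(Y),\phi(g)(Z)$ on $V(1)$ for the two generators $g=\sigma$ and $g=\tau$, and then extend to all of $D_4$ by multiplicativity. Your write-up is simply more explicit than the paper's, spelling out the equality of underlying vector spaces and the reason why checking on generators suffices.
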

\begin{proof}
Applying Table \ref{t:UZ/2Z&D4} to (\ref{X:V(1)})--(\ref{Z:V(1)}), we observe that the actions of $X,Y,Z$ on $(V^\sigma)(1)$ correspond respectively to the actions of $X,-Y,-Z$ on $V(1)$. Likewise, the actions of $X,Y,Z$ on $(V^\tau)(1)$ correspond respectively to the actions of $-X,-Y,Z$ on $V(1)$. 
Thus, by Table \ref{t:BI&pm1^2} and (\ref{phi}), the lemma is true when $g=\sigma$ and $g=\tau$.
Since the group $D_4$ is generated by $\sigma$ and $\tau$, the lemma follows.
\end{proof}

We now proceed to the proof for Theorem \ref{thm:V(1)&LT}:

\begin{proof}[Proof of Theorem \ref{thm:V(1)&LT}]
The implications (ii) $\Rightarrow$ (i) and (iii) $\Rightarrow$ (i) are trivial. It remains to show (i) $\Rightarrow$ (ii) and (i) $\Rightarrow$ (iii).

Let $V$ be a finite-dimensional irreducible $\U_{\Z/2\Z}^{\otimes 2}$-module. 
By Theorem \ref{thm:UZ/2Zmodule_classification} 
the $\U_{\Z/2\Z}$-modules $L_n^{\pm}$ for all $n\in \N$ are all finite-dimensional irreducible $\U_{\Z/2\Z}$-modules up to isomorphism. 
According to \cite[Theorem 3.10.2(ii)]{etingof2011}, the $\U_{\Z/2\Z}^{\otimes 2}$-module $V$ is of one of the following forms up to isomorphism:
\begin{gather*}
L_m^+\otimes L_n^+, 
\quad L_m^+\otimes L_n^-,
\quad L_m^-\otimes L_n^+,
\quad L_m^-\otimes L_n^-,
\\
L_n^+\otimes L_m^+, 
\quad L_n^+\otimes L_m^-, 
\quad L_n^-\otimes L_m^+, 
\quad L_n^-\otimes L_m^-
\end{gather*}
for some $m,n\in \N$ with $m\leq n$. 
By (\ref{L+&L-:rho}) and Table \ref{t:UZ/2Z&D4} these eight $\U_{\Z/2\Z}^{\otimes 2}$-modules are isomorphic to  
\begin{gather*}
(L_m^+\otimes L_n^+)^1, 
\quad (L_m^+\otimes L_n^+)^{\sigma\tau},
\quad (L_m^+\otimes L_n^+)^{\sigma\tau^{-1}},
\quad (L_m^+\otimes L_n^+)^{\tau^2},
\\
(L_m^+\otimes L_n^+)^\sigma, 
\quad (L_m^+\otimes L_n^+)^\tau, 
\quad (L_m^+\otimes L_n^+)^{\tau^{-1}}, 
\quad (L_m^+\otimes L_n^+)^{\sigma\tau^2},
\end{gather*}
respectively.

Consequently, there exist $m,n\in \N$ with $m\leq n$ and an element $g\in D_4$ such that the $\U_{\Z/2\Z}^{\otimes 2}$-module $V$ is isomorphic to $(L_m^+\otimes L_n^+)^{g}$. By Lemma \ref{lem:K4&D4} the $\BI$-module $V(1)$ is isomorphic to $(L_m^+\otimes L_n^+)(1)^{\phi(g)}$. The implications (i) $\Rightarrow$ (ii) and (i) $\Rightarrow$ (iii) then follow from Lemmas \ref{lem2:Lmn} and \ref{lem3:Lmn} as well as Table \ref{t:BI&pm1^2}.
\end{proof}

\section{Proof for Theorem \ref{thm:BI->T}} 
\label{s:proofBI->T}

Recall some notations from Section \ref{s:intro}.
For a set $X$, let $\C^X$ be the vector space with basis $X$. Let $\Omega$ be a finite set and $2^\Omega$ be the power set of $\Omega$.
In this section we adopt the symbol $\subsetdot$ to denote the covering relation in $(2^\Omega, \subseteq)$.

\begin{thm}
[Theorem 13.2, \cite{hypercube2002}]
\label{thm:Go}
There exists a unique $\U$-module $\C^{2^\Omega}$ given by 
\begin{align}
    Ex&=\sum_{y \subsetdot x} y,
    \label{E:2^Omega}
    \\
    Fx&=\sum_{x \subsetdot y} y,
    \label{F:2^Omega}    
    \\
    Hx&=(|\Omega|-2|x|) x
    \label{H:2^Omega}    
\end{align}
for all $x\in 2^\Omega$, where the two sums are taken over all $y \in 2^\Omega$ with $y \subsetdot x$ and $x \subsetdot y$, respectively.
\end{thm}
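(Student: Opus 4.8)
The plan is to exploit the presentation of $\U$ by generators and relations. Since $\U$ is generated by $E,F,H$ subject to \eqref{[HE]}--\eqref{[EF]} (as justified by Lemma \ref{lem:Ubasis}), uniqueness is immediate: any $\U$-module structure extending the prescribed actions \eqref{E:2^Omega}--\eqref{H:2^Omega} on the generators is forced. For existence it suffices to define $E,F,H$ as the linear operators on $\C^{2^\Omega}$ given by \eqref{E:2^Omega}--\eqref{H:2^Omega} and to verify that they satisfy the three defining relations when evaluated on each basis vector $x\in 2^\Omega$.

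First I would dispose of the two easier relations. Applying $E$ to $x$ produces a sum over the sets $y\subsetdot x$, each of cardinality $|x|-1$; hence $H$ scales every such $y$ by $|\Omega|-2|x|+2$, whereas $EHx$ scales the same sum by $|\Omega|-2|x|$. Subtracting yields $[H,E]x=2Ex$, which is \eqref{[HE]}. The verification of $[H,F]x=-2Fx$ is entirely parallel, using that each set $y$ with $x\subsetdot y$ has cardinality $|x|+1$, so that \eqref{[HF]} holds.

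The heart of the matter, and the step I expect to be the main obstacle, is the relation $[E,F]=H$. Here I would expand both $EFx$ and $FEx$ explicitly and track the resulting subsets. Writing $Fx=\sum_{z\notin x}(x\cup\{z\})$ and then applying $E$, the sets covered by $x\cup\{z\}$ are $x$ itself (by deleting $z$) together with the sets $(x\setminus\{a\})\cup\{z\}$ for $a\in x$; this gives $EFx=(|\Omega|-|x|)\,x+\sum_{a\in x,\,z\notin x}(x\setminus\{a\})\cup\{z\}$. Symmetrically, writing $Ex=\sum_{a\in x}(x\setminus\{a\})$ and applying $F$ produces $FEx=|x|\,x+\sum_{a\in x,\,z\notin x}(x\setminus\{a\})\cup\{z\}$. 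The crucial observation is that the two double sums over pairs $(a,z)$ with $a\in x$ and $z\notin x$ are identical term by term and therefore cancel, leaving $[E,F]x=(|\Omega|-|x|)x-|x|x=(|\Omega|-2|x|)x=Hx$, which is \eqref{[EF]}. Once this cancellation is identified, the three relations are established on a basis, and Theorem \ref{thm:Go} follows.
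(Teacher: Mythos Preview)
Your verification is correct. Note, however, that the paper does not supply its own proof of this theorem: it is quoted as Theorem~13.2 of \cite{hypercube2002} and used without further argument. Thus there is no in-paper proof to compare against; your direct check of the relations \eqref{[HE]}--\eqref{[EF]} on basis vectors is a sound and self-contained substitute for the external citation.
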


Let $x_0\in 2^{\Omega}$ be given. By Theorem \ref{thm:Go} the spaces $\C^{2^{x_0}}$ and $\C^{2^{\Omega\setminus x_0}}$ are $\U$-modules. Hence $\C^{2^{\Omega\setminus x_0}}\otimes \C^{2^{x_0}}$ is a $\U^{\otimes 2}$-module. 
Define a linear map $\iota(x_0):\C^{2^\Omega}\to \C^{2^{\Omega\setminus x_0}}\otimes \C^{2^{x_0}}$ by 
\begin{eqnarray}
\label{iota(x0)}
x&\mapsto& (x\setminus x_0) \otimes (x\cap x_0) 
    \qquad 
 \hbox{for all $x\in 2^\Omega$}.
\end{eqnarray}
Note that $\iota(x_0)$ is a linear isomorphism.

\begin{lem}
[Lemma 5.4, \cite{Huang:CG&Johnson}]
\label{lem:diagram}
For any $x_0\in 2^\Omega$ and $u\in \U$ the following diagram is commutative:
\begin{figure}[H]
\centering
\begin{tikzpicture}
\matrix(m)[matrix of math nodes,
row sep=4em, column sep=4em,
text height=1.5ex, text depth=0.25ex]
{
\C^{2^\Omega}
&\C^{2^{\Omega\setminus x_0}}\otimes \C^{2^{x_0}}\\
\C^{2^\Omega}
&\C^{2^{\Omega\setminus x_0}}\otimes \C^{2^{x_0}}\\
};
\path[->,font=\scriptsize,>=angle 90]
(m-1-1) edge node[left] {$u$} (m-2-1)
(m-1-1) edge node[above] {$\iota(x_0)$} (m-1-2)
(m-2-1) edge node[below] {$\iota(x_0)$} (m-2-2)
(m-1-2) edge node[right] {$\Delta(u)$} (m-2-2);
\end{tikzpicture}
\end{figure}
\end{lem}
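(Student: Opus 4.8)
The plan is to show that the linear isomorphism $\iota(x_0)$ is a homomorphism of $\U$-modules by verifying the intertwining relation $\iota(x_0)\circ u=\Delta(u)\circ\iota(x_0)$ only for the algebra generators $u\in\{E,F,H\}$. First I would observe that the set of $u\in\U$ satisfying this relation is a subalgebra: it is a linear subspace containing $1$, and if $u_1,u_2$ both satisfy it then, using that $\Delta$ is an algebra homomorphism,
\[
\iota(x_0)\circ(u_1u_2)=\Delta(u_1)\circ\iota(x_0)\circ u_2=\Delta(u_1)\Delta(u_2)\circ\iota(x_0)=\Delta(u_1u_2)\circ\iota(x_0).
\]
Since $\U$ is generated by $E,F,H$, it therefore suffices to treat these three cases on a basis vector $x\in 2^\Omega$.

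The key device is the decomposition $x=(x\setminus x_0)\sqcup(x\cap x_0)$; write $a=x\setminus x_0$ and $b=x\cap x_0$, so that $\iota(x_0)(x)=a\otimes b$ by \eqref{iota(x0)}. For $H$ I would compute both sides directly: by \eqref{H:2^Omega} the left side is $(|\Omega|-2|x|)\,a\otimes b$, whereas by \eqref{DeltaH} together with the two copies of the $H$-action the right side is $(|\Omega\setminus x_0|-2|a|)\,a\otimes b+(|x_0|-2|b|)\,a\otimes b$. These agree because $|\Omega|=|\Omega\setminus x_0|+|x_0|$ and $|x|=|a|+|b|$.

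For $E$ the essential point is that a set $y\subsetdot x$ is obtained by deleting a single element $\omega\in x$, and $\omega$ lies either in $a$ or in $b$. Deleting $\omega\in a$ leaves $x\cap x_0=b$ unchanged while removing $\omega$ from $x\setminus x_0$, contributing $(a\setminus\{\omega\})\otimes b$; deleting $\omega\in b$ contributes $a\otimes(b\setminus\{\omega\})$. Summing over $\omega\in x$ and comparing with $\Delta(E)(a\otimes b)=(Ea)\otimes b+a\otimes(Eb)$, evaluated via \eqref{DeltaE} and \eqref{E:2^Omega}, shows that the two sides coincide. The case of $F$ is entirely parallel: the elements $\omega$ now range over $\Omega\setminus x$ and split according to whether $\omega\in\Omega\setminus x_0$ or $\omega\in x_0$, matched against $\Delta(F)(a\otimes b)=(Fa)\otimes b+a\otimes(Fb)$ via \eqref{DeltaF} and \eqref{F:2^Omega}.

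I do not expect a serious obstacle here; the entire content is the combinatorial observation that the covering relations of the Boolean lattice $(2^\Omega,\subseteq)$ partition according to the base point $x_0$, and that this partition is exactly mirrored by the two summands of the coproduct. The only bookkeeping to keep straight is that each single deleted or added element belongs to precisely one of the two blocks $x_0$ and $\Omega\setminus x_0$, so that every covering contributes to exactly one tensor leg.
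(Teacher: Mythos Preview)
Your argument is correct. The paper itself does not supply a proof of this lemma but simply cites it as Lemma~5.4 of \cite{Huang:CG&Johnson}; your direct verification on the generators $E,F,H$, together with the subalgebra observation, is exactly the standard way to establish such an intertwining relation and is fully adequate.
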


By pulling back via the comultiplication of $\U$, every $\U\otimes \U$-module can be regarded as a $\U$-module. 
Thus, Lemma \ref{lem:diagram} can also be interpreted as asserting that $\iota(x_0)$ realizes a $\U$-module isomorphism from $\C^{2^\Omega}$ to $\C^{2^{\Omega\setminus x_0}}\otimes \C^{2^{x_0}}$.

\begin{thm}
\label{thm:UZ/2Z_Go&diagram}
\begin{enumerate}
\item There exists a unique $\U_{\Z/2\Z}$-module $\C^{2^\Omega}$
satisfying {\rm\eqref{E:2^Omega}--\eqref{H:2^Omega}} and 
\begin{gather}
\label{rho:2^Omega}
    \rho\, x=\Omega \setminus x     
    \qquad
    \hbox{for all $x\in 2^\Omega$}.
\end{gather}

\item For any $x_0\in 2^\Omega$ and $u\in \U_{\Z/2\Z}$ the following diagram is commutative:
\begin{figure}[H]
\centering
\begin{tikzpicture}
\matrix(m)[matrix of math nodes,
row sep=4em, column sep=4em,
text height=1.5ex, text depth=0.25ex]
{
\C^{2^\Omega}
&\C^{2^{\Omega\setminus x_0}}\otimes \C^{2^{x_0}}\\
\C^{2^\Omega}
&\C^{2^{\Omega\setminus x_0}}\otimes \C^{2^{x_0}}\\
};
\path[->,font=\scriptsize,>=angle 90]
(m-1-1) edge node[left] {$u$} (m-2-1)
(m-1-1) edge node[above] {$\iota(x_0)$} (m-1-2)
(m-2-1) edge node[below] {$\iota(x_0)$} (m-2-2)
(m-1-2) edge node[right] {$\Delta(u)$} (m-2-2);
\end{tikzpicture}
\end{figure}
\end{enumerate} 
\end{thm}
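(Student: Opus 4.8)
The plan is to reduce both parts to the presentation of $\U_{\Z/2\Z}$ supplied by Theorem \ref{thm:UZ/2Z_presentation}, using the $\U$-module structure of Theorem \ref{thm:Go} as the starting point. For part (i), uniqueness is immediate: since $E,F,H,\rho$ generate $\U_{\Z/2\Z}$, any module structure is completely determined by its action on these generators, and those actions are prescribed by \eqref{E:2^Omega}--\eqref{H:2^Omega} and \eqref{rho:2^Omega}. For existence, by Theorem \ref{thm:UZ/2Z_presentation} it suffices to produce operators $E,F,H,\rho$ on $\C^{2^\Omega}$ obeying the relations \eqref{[HE]}--\eqref{[EF]} and \eqref{UZ/2Z:Hrho}--\eqref{UZ/2Z:rho^2}. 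The relations \eqref{[HE]}--\eqref{[EF]} already hold by Theorem \ref{thm:Go}, so only the three relations involving the complementation operator $\rho x=\Omega\setminus x$ require checking. I would verify these by evaluating each side on a basis vector $x\in 2^\Omega$: the relation $\rho^2=1$ is the self-inverseness of complementation; $\rho H=-H\rho$ follows from the identity $|\Omega|-2|\Omega\setminus x|=-(|\Omega|-2|x|)$ together with \eqref{H:2^Omega}; and $\rho E=F\rho$ rests on the single combinatorial fact that complementation is an order-reversing involution of the Boolean lattice, so it interchanges the covering relations, namely $y\subsetdot x$ if and only if $\Omega\setminus x\subsetdot\Omega\setminus y$. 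Under the bijection $y\mapsto\Omega\setminus y$ the sum $\rho Ex=\sum_{y\subsetdot x}(\Omega\setminus y)$ is carried exactly onto $F\rho x=\sum_{(\Omega\setminus x)\subsetdot z}z$.

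For part (ii), the key structural observation is that the set of $u\in\U_{\Z/2\Z}$ for which the square commutes is a subalgebra. It is visibly a linear subspace, and if $\iota(x_0)u_1=\Delta(u_1)\iota(x_0)$ and $\iota(x_0)u_2=\Delta(u_2)\iota(x_0)$, then composing the two squares gives $\iota(x_0)u_1u_2=\Delta(u_1u_2)\iota(x_0)$, since $\Delta$ is an algebra homomorphism. Because $\U_{\Z/2\Z}$ is generated by $E,F,H,\rho$, and commutativity of the diagram for $E,F,H$ is precisely the content of Lemma \ref{lem:diagram}, it remains only to verify the diagram for $u=\rho$. Recalling $\Delta(\rho)=\rho\otimes\rho$ from \eqref{Deltarho}, I would evaluate both composites on $x\in 2^\Omega$ and reduce the claim to the two set-theoretic identities
\[
(\Omega\setminus x)\setminus x_0=(\Omega\setminus x_0)\setminus(x\setminus x_0),
\qquad
(\Omega\setminus x)\cap x_0=x_0\setminus(x\cap x_0),
\]
which say exactly that global complementation in $\Omega$ corresponds, under the splitting $\iota(x_0)$, to complementation within $\Omega\setminus x_0$ on the first tensor factor and within $x_0$ on the second. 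These two local complementations are precisely the operators realizing $\rho\otimes\rho$ on $\C^{2^{\Omega\setminus x_0}}\otimes\C^{2^{x_0}}$, so the square closes.

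The main obstacle is not conceptual but one of bookkeeping: in part (ii) one must keep track of the fact that the symbol $\rho$ on the two right-hand factors denotes complementation relative to the \emph{local} ground sets $\Omega\setminus x_0$ and $x_0$, not relative to $\Omega$, and confirm that these local complementations assemble to the global one via $\iota(x_0)$. Once the order-reversal of the covering relation is isolated in part (i), the remaining arguments are routine evaluations on basis vectors together with the multiplicativity reduction in part (ii).
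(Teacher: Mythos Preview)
Your proposal is correct and follows essentially the same approach as the paper: reduce to the presentation of Theorem~\ref{thm:UZ/2Z_presentation}, verify the three relations involving $\rho$ on basis vectors for part~(i), and for part~(ii) check the diagram only for $u=\rho$ (invoking Lemma~\ref{lem:diagram} for $E,F,H$), reducing to the same two set-theoretic identities. Your explicit subalgebra reduction in part~(ii) is left implicit in the paper, but the argument is otherwise identical.
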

\begin{proof}
(i): By Theorem \ref{thm:Go} the relations \eqref{[HE]}--\eqref{[EF]} hold on $\C^{2^\Omega}$. According to Theorem \ref{thm:UZ/2Z_presentation}, it remains to verify the relations \eqref{UZ/2Z:Hrho}--\eqref{UZ/2Z:rho^2} on $\C^{2^\Omega}$.

Let $x \in 2^\Omega$ be given.  
By \eqref{rho:2^Omega} the left-hand side of (\ref{UZ/2Z:rho^2}) fixes $x$, so the relation \eqref{UZ/2Z:rho^2} holds on $\C^{2^\Omega}$. 
Evaluating both sides of \eqref{UZ/2Z:Hrho} at $x$ using \eqref{H:2^Omega} and \eqref{rho:2^Omega}, both sides are equal to $(|\Omega|-2|x|)\cdot (\Omega\setminus x)$. Hence the relation \eqref{UZ/2Z:Hrho} holds on $\C^{2^\Omega}$. 
Evaluating the left-hand side of \eqref{UZ/2Z:rhoE} at $x$ using \eqref{E:2^Omega} and \eqref{rho:2^Omega} gives
$
\sum_{y \subsetdot x} \Omega \setminus y$. 
Evaluating the right-hand side at $x$ using \eqref{F:2^Omega} and \eqref{rho:2^Omega} gives
$
\sum_{\Omega \setminus x \subsetdot y} y$.
Both expressions coincide with
$$
\sum_{\Omega \setminus y \subsetdot x} y.
$$
Therefore the relation \eqref{UZ/2Z:rhoE} holds on $\C^{2^\Omega}$. The statement (i) follows.

(ii): Let $x \in 2^\Omega$ be given.  
By \eqref{iota(x0)} and \eqref{rho:2^Omega} the composition $\iota(x_0)\circ \rho$ maps $x$ to 
$$
\big((\Omega\setminus x)\setminus x_0\big)
\,\otimes\,
\big((\Omega\setminus x)\cap x_0\big).
$$
By \eqref{Deltarho}, \eqref{iota(x0)} and \eqref{rho:2^Omega} the composition $\Delta(\rho)\circ \iota(x_0)$ maps $x$ to
$$
\big((\Omega\setminus x_0)\setminus (x\setminus x_0)\big)
\,\otimes\,
\big( x_0\setminus (x\cap x_0)\big).
$$ 
The first factors of both tensor products are equal to $\Omega\setminus (x\cup x_0)$, and the second factors are equal to $x_0\setminus x$. 
Therefore the two tensor products coincide.
Combined with Lemma \ref{lem:diagram}, the statement (ii) follows.
\end{proof}

We now regard $\C^{2^\Omega}$ as a $\U_{\Z/2\Z}^{\otimes 2}$-module by identifying it with $\C^{2^{\Omega\setminus x_0}} \otimes \C^{2^{x_0}}$ via the linear isomorphism $\iota(x_0)$.
The resulting $\U_{\Z/2\Z}^{\otimes 2}$-module structure depends on the choice of $x_0$, and we denote this module by
$$
\C^{2^\Omega}(x_0).
$$

\begin{lem}
\label{lem:2^Omega(x0)}
For any $x_0\in 2^\Omega$, the following equations hold for all $x\in 2^\Omega$ in the $\U_{\Z/2\Z}^{\otimes 2}$-module $\C^{2^\Omega}(x_0)$:
\begin{enumerate}
\item $(H\otimes 1) x=(|\Omega\setminus x_0|-2|x\setminus x_0|) x$.

\item $(1\otimes H) x=(|x_0|-2|x\cap x_0|) x$.

\item $\Delta(H) x=(|\Omega|-2|x|) x$.

\item $\Delta(E\rho) x=\sum_{y\subsetdot \Omega\setminus x} y$, 
where the sum is over all $y\in 2^\Omega$ with $y\subsetdot \Omega\setminus x$.
\end{enumerate}
\end{lem}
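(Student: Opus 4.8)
The plan is to work throughout with the identification $\C^{2^\Omega}(x_0)\cong \C^{2^{\Omega\setminus x_0}}\otimes\C^{2^{x_0}}$ furnished by the linear isomorphism $\iota(x_0)$ of \eqref{iota(x0)}, under which the basis vector $x$ corresponds to $(x\setminus x_0)\otimes(x\cap x_0)$. Since the $\U_{\Z/2\Z}^{\otimes 2}$-module structure on $\C^{2^\Omega}(x_0)$ is by definition the tensor product structure transported through $\iota(x_0)$, each $a\otimes b\in\U_{\Z/2\Z}^{\otimes 2}$ acts on $x$ by $\iota(x_0)^{-1}\bigl(a(x\setminus x_0)\otimes b(x\cap x_0)\bigr)$.

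For parts (i) and (ii) I would compute directly in this tensor model. Note that $x\setminus x_0$ is a subset of the ground set $\Omega\setminus x_0$ while $x\cap x_0$ is a subset of $x_0$. Applying the $H$-action \eqref{H:2^Omega} of Theorem \ref{thm:Go} on $\C^{2^{\Omega\setminus x_0}}$ gives $H(x\setminus x_0)=(|\Omega\setminus x_0|-2|x\setminus x_0|)(x\setminus x_0)$, so that $H\otimes 1$ scales $x$ by $|\Omega\setminus x_0|-2|x\setminus x_0|$; this is (i). Likewise, applying \eqref{H:2^Omega} on $\C^{2^{x_0}}$ yields (ii). Part (iii) then follows by adding (i) and (ii) through $\Delta(H)=H\otimes 1+1\otimes H$ from \eqref{DeltaH}, together with the elementary identities $|\Omega\setminus x_0|+|x_0|=|\Omega|$ and $|x\setminus x_0|+|x\cap x_0|=|x|$; alternatively one may read (iii) off directly from the commutative diagram of Theorem \ref{thm:UZ/2Z_Go&diagram}(ii) applied to $u=H$, combined with the $\U_{\Z/2\Z}$-action \eqref{H:2^Omega}.

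The cleanest route to (iv) is to avoid expanding $\Delta(E\rho)$ as a sum in the tensor model and instead invoke the commutative diagram of Theorem \ref{thm:UZ/2Z_Go&diagram}(ii) with the single element $u=E\rho\in\U_{\Z/2\Z}$. Because $\iota(x_0)$ intertwines the action of $E\rho$ on $\C^{2^\Omega}$ with the action of $\Delta(E\rho)$ on the tensor product, computing $\Delta(E\rho)x$ in $\C^{2^\Omega}(x_0)$ reduces to computing $(E\rho)x$ in the $\U_{\Z/2\Z}$-module $\C^{2^\Omega}$. Here $\rho x=\Omega\setminus x$ by \eqref{rho:2^Omega}, and then $E(\Omega\setminus x)=\sum_{y\subsetdot\Omega\setminus x}y$ by \eqref{E:2^Omega}, which is exactly the asserted formula.

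I expect no serious obstacle: all four identities are bookkeeping once the tensor model and the intertwining diagram are in place. The one point demanding a little care is (iv), where it is essential that the diagram of Theorem \ref{thm:UZ/2Z_Go&diagram}(ii) holds for the product $E\rho$, and not merely for the generators, so that $\Delta(E\rho)$ genuinely corresponds to the composite of $\rho$ followed by $E$ on $\C^{2^\Omega}$; the set-complement substitution of \eqref{rho:2^Omega} must be performed before the covering-relation sum of \eqref{E:2^Omega}, in that order.
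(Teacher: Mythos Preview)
Your proposal is correct and matches the paper's own proof essentially line for line: parts (i) and (ii) are read off from \eqref{H:2^Omega} via the tensor identification, part (iii) follows either by adding (i) and (ii) or from the diagram with $u=H$, and part (iv) is obtained by applying Theorem~\ref{thm:UZ/2Z_Go&diagram}(ii) with $u=E\rho$ and then computing $(E\rho)x$ in $\C^{2^\Omega}$ via \eqref{rho:2^Omega} followed by \eqref{E:2^Omega}. Your caution about the diagram holding for the product $E\rho$ is already covered by the statement of Theorem~\ref{thm:UZ/2Z_Go&diagram}(ii), which is asserted for every $u\in\U_{\Z/2\Z}$.
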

\begin{proof}
(i), (ii): These are immediate from \eqref{H:2^Omega} and the construction of $\C^{2^\Omega}(x_0)$.

(iii): This is immediate from (i), (ii) and \eqref{DeltaH}. Alternatively, the equation (iii) can be deduced directly from \eqref{H:2^Omega} by applying Theorem \ref{thm:UZ/2Z_Go&diagram}(ii) with $u = H$.

(iv): Using \eqref{E:2^Omega} and \eqref{rho:2^Omega} shows that $(E\rho) x$ equals the right-hand side of (iv) for all $x\in 2^\Omega$. 
Applying Theorem \ref{thm:UZ/2Z_Go&diagram}(ii) with $u=E\rho$, the statement (iv) follows.
\end{proof}

We next turn to the proof for Theorem \ref{thm:BI->T}:

\begin{proof}[Proof of Theorem \ref{thm:BI->T}] 
Assume that $d\ge 1$ is an integer and $|\Omega|=2d+1$, which are the hypotheses of Theorem \ref{thm:BI->T}. 
By Theorem \ref{thm:V(1)}, the space $\C^{2^\Omega}(x_0)(1)$ carries a $\BI$-module structure on which $X$ and $Y$ act as $\Delta(E\rho)$ and $\frac{H\otimes 1-1\otimes H}{2}$, respectively.
It follows from Lemma \ref{lem:2^Omega(x0)}(iii) that 
$$
\C^{2^\Omega}(x_0)(1)=\C^{{\Omega\choose d}}
\qquad 
\hbox{for any $x_0\in 2^\Omega$}.
$$
Now let $x_0\in {\Omega\choose d}$. 

(i): Let $x\in {\Omega\choose d}$.  For any $y\in 2^\Omega$, the condition $y\subsetdot \Omega\setminus x$ is equivalent to $x\cap y=\emptyset$ and $y\in {\Omega\choose d}$. Hence, comparing \eqref{A} with Lemma \ref{lem:2^Omega(x0)}(iv), the operator $X$ on $\C^{2^\Omega}(x_0)(1)$ coincides with $\A$. 

Since $|\Omega|=2d+1$ and $|x_0|=d$, it follows from Lemma \ref{lem:2^Omega(x0)}(i), (ii) that $Yx$ is a scalar multiple of $x$ with scalar 
\begin{gather}
\label{e1:Yx}
\frac{1}{2}-|x\setminus x_0|+|x\cap x_0|.
\end{gather}
Since $x\cap x_0=x_0\setminus (x_0\setminus x)$, we have $|x\cap x_0|=d-|x_0\setminus x|$. Hence \eqref{e1:Yx} can be rewritten as
$$
d+\frac{1}{2}-(|x\setminus x_0|+|x_0\setminus x|).
$$
By comparing this with \eqref{A^*}, the operator $Y$ on $\C^{2^\Omega}(x_0)(1)$ coincides with 
$$
\frac{d+1}{2d+1} \A^*(x_0)+\frac{1}{2(2d+1)}.
$$
By Lemma \ref{lem:klm&Casimir_V(1)}(i) the element $\kappa$ vanishes on $\C^{2^\Omega}(x_0)(1)$. Using (\ref{BI:center1}) the operator $Z$ on $\C^{2^\Omega}(x_0)(1)$ coincides with 
$$
\frac{d+1}{2d+1} \{\A,\A^*(x_0)\}+\frac{1}{2d+1}\A.
$$

By the preceding arguments, the existence of the homomorphism $\BI\to \T(x_0)$ follows. The uniqueness is immediate since $\BI$ is generated by $X,Y,Z$. The homomorphism is surjective
since $\T(x_0)$ is generated by $\A,\A^*(x_0)$. The statement (i) follows.

(ii): By Theorem \ref{thm:UZ/2Zmodule_semisimple} the $\U_{\Z/2\Z}$-modules $\C^{2^{\Omega\setminus x_0}}$ and $ \C^{2^{x_0}}$ are completely reducible. Hence the $\U_{\Z/2\Z}^{\otimes 2}$-module $\C^{2^{\Omega\setminus x_0}}\otimes \C^{2^{x_0}}$, and consequently $\C^{2^\Omega}(x_0)$, is completely reducible. 
 The statement (ii) now follows from Theorem \ref{thm:V(1)&LT}.
\end{proof}

\section{The Clebsch--Gordan rule for $\U_{\Z/2\Z}$}
\label{s:CG}

Recall the $\U$-module $L_n$ ($n\in \N$) from (\ref{Ln:E})--(\ref{Ln:H}). The $\U$-module $L_n$ satisfies the following universal property:

\begin{lem}
\label{lem:Ln_universal}
Let $V$ denote a finite-dimensional $\U$-module. 
Suppose that there exist an integer $n\in \N$ and a vector $v\in V$ such that $Hv=nv$ and $Ev=0$. Then there exists a unique $\U$-module homomorphism $L_n\to V$ that maps $v_0^{(n)}$ to $v$.
\end{lem}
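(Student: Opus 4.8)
The plan is to construct the homomorphism explicitly and read off uniqueness from the fact that $v_0^{(n)}$ generates $L_n$. Since $F\,v_i^{(n)}=(n-i)v_{i+1}^{(n)}$ by \eqref{Ln:F}, one has $v_i^{(n)}=\frac{(n-i)!}{n!}F^i v_0^{(n)}$ for $0\le i\le n$, so $L_n$ is generated as a $\U$-module by $v_0^{(n)}$; consequently any $\U$-module homomorphism is determined by the image of $v_0^{(n)}$, which settles uniqueness at once. For existence I would set $u_i=F^i v$ for $i\ge 0$ and define the candidate linear map $\psi:L_n\to V$ by $\psi(v_i^{(n)})=\frac{(n-i)!}{n!}u_i$ for $0\le i\le n$. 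As $\{v_i^{(n)}\}_{i=0}^n$ is a basis of $L_n$, this $\psi$ is well defined and clearly sends $v_0^{(n)}$ to $v$; it then remains to check that $\psi$ intertwines the actions of $E$, $F$, $H$ on the basis.

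The verification for $H$ and $E$ reduces to two identities proved by induction from the relations \eqref{[HF]} and \eqref{[EF]}. First, $[H,F]=-2F$ gives $H u_i=(n-2i)u_i$ for all $i\ge 0$, starting from $Hv=nv$. Second, writing $[E,F]=H$ as $EF=FE+H$ and combining with the first identity, a routine induction yields $E u_i=i(n-i+1)u_{i-1}$ for all $i\ge 1$, together with $Eu_0=Ev=0$. After matching the factorial coefficients, these two identities are exactly $H\psi(v_i^{(n)})=(n-2i)\psi(v_i^{(n)})$ and $E\psi(v_i^{(n)})=i\,\psi(v_{i-1}^{(n)})$, so $\psi$ commutes with $H$ and $E$ on the basis. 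The relation $F u_i=u_{i+1}$ is immediate and matches $F\psi(v_i^{(n)})=(n-i)\psi(v_{i+1}^{(n)})$ for $0\le i\le n-1$.

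The one step that is not a formal manipulation, and which I expect to be the crux, is the top relation $F\,v_n^{(n)}=0$: here I must show $\psi(F v_n^{(n)})=0$ agrees with $F\psi(v_n^{(n)})=\frac{1}{n!}u_{n+1}$, that is, $F^{n+1}v=0$. This is where finite-dimensionality of $V$ enters. The nonzero vectors among $\{u_i\}_{i\ge 0}$ are eigenvectors of $H$ with the pairwise distinct eigenvalues $n-2i$, hence linearly independent, so only finitely many $u_i$ are nonzero; let $m$ be the largest index with $u_m\neq 0$ (taking $v\neq 0$, the case $v=0$ being trivial). Applying the identity $E u_{m+1}=(m+1)(n-m)u_m$ to $u_{m+1}=0$ and using $u_m\neq 0$ forces $n-m=0$, hence $m=n$. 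Thus $F^{n+1}v=u_{n+1}=0$ and $u_0,\dots,u_n$ are all nonzero, which closes the remaining compatibility check and completes the construction of $\psi$.
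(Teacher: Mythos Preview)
Your argument is correct and self-contained; the paper itself does not give a proof but simply refers to \cite[Theorem V.4.4]{kassel}, and what you have written is essentially the standard highest-weight-vector construction found there.
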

\begin{proof}
See\cite[Theorem V.4.4]{kassel} for example.
\end{proof}

For any integers $i$ and $n$ with $0\leq i\leq n$, the symbol
$$
{n\choose i}=\frac{n(n-1)\cdots (n-i+1)}{i!}
$$
denotes the binomial coefficient.

\begin{lem}
\label{lem:Lm+n-2p}
Let $m,n\in \N$. For any integer $p$ with $0\leq p\leq \min\{m,n\}$, the following statements hold:
\begin{enumerate}
\item There exists a unique $\U$-module homomorphism $f:L_{m+n-2p}\to L_m\otimes L_n$ such that 
\begin{eqnarray*}
v_0^{(m+n-2p)} 
&\mapsto &
\sum_{i=0}^p (-1)^i {p\choose i} 
\, v_i^{(m)}\otimes v_{p-i}^{(n)}.
\end{eqnarray*}
In particular $f$ is injective.

\item $f\big(v_{m-p}^{(m+n-2p)}\big)$ is a linear combination of $\{v_{m-i}^{(m)}\otimes v_i^{(n)}\}_{i=1}^{\min\{m,n\}}$ plus
$$
(-1)^p {m+n-2p \choose m-p}^{-1} v_m^{(m)}\otimes v_0^{(n)}.
$$

\item $f\big(v_{n-p}^{(m+n-2p)}\big)$ is a linear combination of $\{v_{i}^{(m)}\otimes v_{n-i}^{(n)}\}_{i=1}^{\min\{m,n\}}$ plus
$$
{m+n-2p \choose n-p}^{-1} v_0^{(m)}\otimes v_n^{(n)}.
$$
\end{enumerate}
\end{lem}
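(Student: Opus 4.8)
The plan is to prove (i) by exhibiting the image of $v_0^{(m+n-2p)}$ as an explicit highest weight vector in $L_m\otimes L_n$ and then invoking the universal property in Lemma \ref{lem:Ln_universal}. Set
$$
w=\sum_{i=0}^p (-1)^i \binom{p}{i}\, v_i^{(m)}\otimes v_{p-i}^{(n)}.
$$
Each summand has $\Delta(H)$-weight $(m-2i)+(n-2(p-i))=m+n-2p$ by \eqref{Ln:H}, so $\Delta(H)w=(m+n-2p)w$. The crux of (i) is to verify $\Delta(E)w=0$. Using $\Delta(E)=E\otimes 1+1\otimes E$ and \eqref{Ln:E}, expanding produces two sums indexed over the basis vectors $v_j^{(m)}\otimes v_{p-1-j}^{(n)}$; after reindexing, the coefficient of each such vector is $(-1)^{j+1}(j+1)\binom{p}{j+1}+(-1)^j(p-j)\binom{p}{j}$, which vanishes by the elementary identity $(j+1)\binom{p}{j+1}=(p-j)\binom{p}{j}$. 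Since $w\neq 0$ (its $i=0$ term is $v_0^{(m)}\otimes v_p^{(n)}$), Lemma \ref{lem:Ln_universal} produces the unique homomorphism $f$ with $f(v_0^{(m+n-2p)})=w$. Injectivity follows because $L_{m+n-2p}$ is irreducible, so $\ker f$ is either $0$ or all of $L_{m+n-2p}$, and the latter is excluded by $f\neq 0$.

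For (ii) and (iii) I would first restrict the support of the image by a weight argument, then read off the single extreme coefficient. Write $N=m+n-2p$. Since $F^k v_0^{(N)}=\frac{N!}{(N-k)!}\,v_k^{(N)}$ by \eqref{Ln:F}, and $f$ intertwines the actions, we have $f(v_k^{(N)})=\frac{(N-k)!}{N!}\,\Delta(F)^k w$. The vector $v_{m-p}^{(N)}$ has $\Delta(H)$-weight $N-2(m-p)=n-m$, and the weight-$(n-m)$ space of $L_m\otimes L_n$ is spanned by $\{v_{m-i}^{(m)}\otimes v_i^{(n)}\}_{i=0}^{\min\{m,n\}}$; this already places $f(v_{m-p}^{(N)})$ in the asserted span. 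To compute the coefficient of $v_m^{(m)}\otimes v_0^{(n)}$, note that $\Delta(F)=F\otimes 1+1\otimes F$ only raises indices, so reaching the second factor $v_0^{(n)}$ forces the starting second index $p-i$ to be $0$ (only the $i=p$ term of $w$ contributes) and forces all $m-p$ copies of $F$ to act in the first factor. Using $F^{m-p}v_p^{(m)}=(m-p)!\,v_m^{(m)}$, the relevant coefficient of $\Delta(F)^{m-p}w$ is $(-1)^p (m-p)!$, and multiplying by $\frac{(N-(m-p))!}{N!}=\frac{(n-p)!}{(m+n-2p)!}$ yields $(-1)^p\binom{m+n-2p}{m-p}^{-1}$, as claimed.

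Part (iii) is the mirror image: $v_{n-p}^{(N)}$ has weight $m-n$, whose weight space is spanned by $\{v_i^{(m)}\otimes v_{n-i}^{(n)}\}_{i=0}^{\min\{m,n\}}$, and only the $i=0$ term $v_0^{(m)}\otimes v_p^{(n)}$ of $w$ can produce the first factor $v_0^{(m)}$, with all $n-p$ copies of $F$ landing in the second factor; the same bookkeeping, via $F^{n-p}v_p^{(n)}=(n-p)!\,v_n^{(n)}$, gives the coefficient $\binom{m+n-2p}{n-p}^{-1}$, now with no sign since the surviving term of $w$ carries coefficient $(-1)^0=1$.

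The main obstacle is the cancellation $\Delta(E)w=0$ in (i), which is the one genuinely non-formal computation; everything in (ii) and (iii) is then bookkeeping, streamlined by the observation that a weight argument pins down the support of $f(v_{m-p}^{(N)})$ and $f(v_{n-p}^{(N)})$ in advance, leaving only a single factorial coefficient to be evaluated in each case.
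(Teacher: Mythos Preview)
Your argument is correct and is exactly the approach the paper has in mind: it invokes Lemma~\ref{lem:Ln_universal} for (i) and says (ii), (iii) follow by direct computation from (i), \eqref{DeltaF} and \eqref{Ln:F}, which is precisely the weight-space/$\Delta(F)^k$ bookkeeping you carry out. You have simply supplied the details the paper omits.
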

\begin{proof}
Using Lemma~\ref{lem:Ln_universal} the existence and uniqueness of $f$ is straightforward to verify. Since the $\U$-module $L_{m+n-2p}$ is irreducible, the homomorphism $f$ is injective.
The statements (ii) and (iii) follow from direct computation using  (i), \eqref{DeltaF} and \eqref{Ln:F}.
\end{proof}

Let $m,n\in \N$ be given.
By Lemma \ref{lem:Lm+n-2p}(i) the $\U$-module $L_m\otimes L_n$ contains the $\U$-submodules isomorphic to $L_{m+n-2p}$ for all integers $p$ with $0\leq p\leq \min\{m,n\}$. A dimension argument then shows that the $\U$-module $L_m\otimes L_n$ is isomorphic to
\begin{gather}
\label{CG_U}
\bigoplus_{p=0}^{\min\{m,n\}}
L_{m+n-2p}.
\end{gather}
This decomposition is known as the {\it Clebsch--Gordan rule} for $\U$. 
As a consequence, for each integer $p$ with $0\leq p\leq \min\{m,n\}$, there is a unique $\U$-submodule of $L_m\otimes L_n$ that is isomorphic to $L_{m+n-2p}$.

In order to formulate the Clebsch--Gordan rule for $\U_{\Z/2\Z}$, the signs $+$ and $-$ in the $\U_{\Z/2\Z}$-modules $L_n^+$ and $L_n^-$ ($n\in \N$) are identified with the real scalars $+1$ and $-1$, respectively.

\begin{thm}
\label{thm:CG_UZ/2Z} 
Suppose that $m,n\in \N$ and $\delta,\e \in \{\pm 1\}$. Then the $\U_{\Z/2\Z}$-module $L_m^\delta \otimes L_n^{\e}$ is isomorphic to 
\begin{gather*}
  \bigoplus_{p=0}^{\min\{m,n\}}  
  L_{m+n-2p}^{(-1)^p\cdot \delta \cdot \e}.
\end{gather*}
\end{thm}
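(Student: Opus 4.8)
The plan is to restrict to $\U$, apply the Clebsch--Gordan rule for $\U$ to split the module, observe that each isotypic summand is $\rho$-invariant, and then pin down the sign attached to each summand by a single coefficient comparison.

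First I would restrict the $\U_{\Z/2\Z}$-module $L_m^\delta\otimes L_n^\e$ to $\U$. Since $\rho$ acts by $\Delta(\rho)=\rho\otimes\rho$ (see \eqref{Deltarho}), this restriction is exactly $L_m\otimes L_n$, which by \eqref{CG_U} decomposes as $\bigoplus_{p=0}^{\min\{m,n\}} L_{m+n-2p}$ with each summand of multiplicity one. For each $p$ let $W_p$ denote the image of the injective $\U$-homomorphism $f=f_p$ of Lemma \ref{lem:Lm+n-2p}(i); this is the unique $\U$-submodule isomorphic to $L_{m+n-2p}$. By Lemma \ref{lem1:W&rhoW}(i) the space $\rho W_p$ is again a $\U$-submodule, and by Lemma \ref{lem2:W&rhoW}(i) it is isomorphic to $L_{m+n-2p}$; multiplicity one then forces $\rho W_p=W_p$. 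Hence each $W_p$ is a $\U_{\Z/2\Z}$-submodule and $L_m^\delta\otimes L_n^\e=\bigoplus_p W_p$ as $\U_{\Z/2\Z}$-modules. By Lemma \ref{lem2:W&rhoW}(ii) each $W_p$ is isomorphic to $L_{m+n-2p}^{s_p}$ for a unique sign $s_p\in\{\pm1\}$, so the theorem reduces to showing $s_p=(-1)^p\delta\e$.

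To compute $s_p$, set $N=m+n-2p$ and use the standard basis $w_i=f(v_i^{(N)})$ of $W_p$. Because $f$ is a $\U$-isomorphism, $\{w_i\}$ satisfies \eqref{Ln:E}--\eqref{Ln:H}, and a short argument using $\rho H=-H\rho$, $\rho E=F\rho$ and $\rho^2=1$ shows that $\rho w_i=s_p w_{N-i}$ for all $i$ with a single sign $s_p$ (this is precisely what underlies Lemma \ref{lem2:W&rhoW}(ii)). Taking $i=m-p$ and noting $N-(m-p)=n-p$ gives $\rho\, f(v_{m-p}^{(N)})=s_p\, f(v_{n-p}^{(N)})$. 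Now I would compare the coefficient of $v_0^{(m)}\otimes v_n^{(n)}$ on the two sides. On the right, Lemma \ref{lem:Lm+n-2p}(iii) gives this coefficient as $s_p\binom{N}{n-p}^{-1}$. On the left, Lemma \ref{lem:Lm+n-2p}(ii) expresses $f(v_{m-p}^{(N)})$ as a combination of the vectors $v_{m-i}^{(m)}\otimes v_i^{(n)}$ with $i\ge1$ plus $(-1)^p\binom{N}{m-p}^{-1}\,v_m^{(m)}\otimes v_0^{(n)}$; applying $\rho\otimes\rho$ and using \eqref{L+&L-:rho} sends only this last term to a multiple of $v_0^{(m)}\otimes v_n^{(n)}$, namely $\delta\e(-1)^p\binom{N}{m-p}^{-1}\,v_0^{(m)}\otimes v_n^{(n)}$, while the remaining terms land on $v_i^{(m)}\otimes v_{n-i}^{(n)}$ with $i\ge1$ and contribute nothing. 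Equating coefficients yields $\delta\e(-1)^p\binom{N}{m-p}^{-1}=s_p\binom{N}{n-p}^{-1}$, and since $(m-p)+(n-p)=N$ forces $\binom{N}{m-p}=\binom{N}{n-p}$, I conclude $s_p=(-1)^p\delta\e$.

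The main obstacle is the sign determination in the last paragraph: the $\U$-level decomposition and the $\rho$-invariance of the summands are immediate from the cited lemmas, but fixing the correct sign $s_p$ requires selecting vectors whose extremal coefficients are already recorded. The choice $i=m-p$ is exactly what makes both relevant coefficients accessible through Lemma \ref{lem:Lm+n-2p}(ii),(iii), and the binomial symmetry $\binom{N}{m-p}=\binom{N}{n-p}$ is what cancels to leave the clean answer $(-1)^p\delta\e$.
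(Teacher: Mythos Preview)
Your proposal is correct and follows essentially the same route as the paper: restrict to $\U$, use multiplicity one in the Clebsch--Gordan decomposition \eqref{CG_U} to force $\rho W_p=W_p$, and then determine the sign by comparing the coefficient of $v_0^{(m)}\otimes v_n^{(n)}$ on both sides of $\rho f(v_{m-p}^{(N)})=s_p f(v_{n-p}^{(N)})$ via Lemma~\ref{lem:Lm+n-2p}(ii),(iii). The only cosmetic difference is that the paper obtains this last equation by invoking Schur's lemma to show that $f$ itself is a $\U_{\Z/2\Z}$-isomorphism (hence commutes with $\rho$), whereas you extract it directly from the argument underlying Lemma~\ref{lem2:W&rhoW}(ii); the computations are otherwise identical.
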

\begin{proof}
As a $\U$-module, $L_m^{\delta}\otimes L_{n}^{\e}$ is isomorphic to $L_m\otimes L_n$. 
By the Clebsch--Gordan rule for $\U$, the $\U$-module $L_m^{\delta}\otimes L_{n}^{\e}$ is isomorphic to (\ref{CG_U}). 
Fix an integer $p$ with $0\leq p\leq \min\{m,n\}$. 
Let 
$$
f: L_{m+n-2p}\to L_m^{\delta}\otimes L_{n}^{\e}
$$ 
be the injective $\U$-module homomorphism given by Lemma \ref{lem:Lm+n-2p}(i).

Let $W$ denote the image of $f$. Then the $\U$-module $W$ is isomorphic to $L_{m+n-2p}$.
By Lemma \ref{lem1:W&rhoW}(i), the space $\rho W$ is a $\U$-submodule of $L_m^{\delta}\otimes L_{n}^{\e}$. Moreover, Lemma \ref{lem2:W&rhoW}(i) implies that $\rho W$ is isomorphic to $L_{m+n-2p}$. 
Thus 
$$
W=\rho W.
$$
By Lemma \ref{lem1:W&rhoW}(ii) the space $W$ is a $\U_{\Z/2\Z}$-submodule of $L_m^{\delta}\otimes L_{n}^{\e}$.
It then follows from Lemmas \ref{lem:Ln+&Ln-}(ii) and  \ref{lem2:W&rhoW}(ii) that there exists a unique $\epsilon\in \{\pm 1\}$ for which $W$ is isomorphic to $L_{m+n-2p}^{\epsilon}$. It suffices to prove that 
\begin{gather}
\label{epsilon}
\epsilon=(-1)^p\delta\e.
\end{gather}

Since the $\U_{\Z/2\Z}$-module $W$ is isomorphic to $L_{m+n-2p}^{\epsilon}$, there exists a $\U_{\Z/2\Z}$-module isomorphism from $L_{m+n-2p}^{\epsilon}$ to $W$.
As this map is also a $\U$-module isomorphism, by Schur's lemma it must be a nonzero scalar multiple of $f$. 
Hence the map
$$
f:L_{m+n-2p}^{\epsilon}\to W
$$
is a $\U_{\Z/2\Z}$-module isomorphism. In particular $f$ commutes with $\rho$. Consider the relation
\begin{gather}
\label{rhof=frho}
(\rho f)\big(v_{m-p}^{(m+n-2p)}\big)=(f\rho)\big(v_{m-p}^{(m+n-2p)}\big).
\end{gather}
By \eqref{L+&L-:rho} and Lemma \ref{lem:Lm+n-2p}(iii), the right-hand side of \eqref{rhof=frho} is a linear combination of $\{v_i^{(m)}\otimes v_{n-i}^{(n)}\}_{i=1}^{\min\{m,n\}}$ plus 
\begin{gather}
\label{RHS:rhof=frho}
\epsilon\cdot  {m+n-2p \choose n-p}^{-1} v_0^{(m)}\otimes v_n^{(n)}.
\end{gather}
Similarly, by \eqref{L+&L-:rho} and Lemma \ref{lem:Lm+n-2p}(ii), the left-hand side of \eqref{rhof=frho} is a linear combination of $\{v_i^{(m)}\otimes v_{n-i}^{(n)}\}_{i=1}^{\min\{m,n\}}$ plus 
\begin{gather}
\label{LHS:rhof=frho}
(-1)^p\delta \e\cdot {m+n-2p \choose m-p}^{-1} v_0^{(m)}\otimes v_n^{(n)}.
\end{gather}
Since ${m+n-2p \choose n-p}={m+n-2p \choose m-p}$, comparing \eqref{RHS:rhof=frho} and \eqref{LHS:rhof=frho} yields  \eqref{epsilon}. The result follows. 
\end{proof}

Finally, we apply the Clebsch--Gordan rule for $\U_{\Z/2\Z}$ (Theorem~\ref{thm:CG_UZ/2Z}) to decompose the $\U_{\Z/2\Z}$-module $\C^{2^\Omega}$ as described in Theorem~\ref{thm:UZ/2Z_Go&diagram}(i).
For any vector space $V$ and any positive integer $n$, we write $n\cdot V$ for the direct sum of $n$ copies of $V$.

\begin{cor}
\label{cor:2^Omega_dec} 
The $\U_{\Z/2\Z}$-module $\C^{2^\Omega}$ is isomorphic to 
\begin{gather}
\label{2^Omega_dec}
\bigoplus_{i=0}^{\floor{\frac {|\Omega|}2}} 
\frac{|\Omega|-2i+1}{|\Omega|-i+1}{|\Omega|\choose i}
\cdot  
L_{|\Omega|-2i}^{(-1)^i}.
\end{gather}
\end{cor}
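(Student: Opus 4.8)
The plan is to induct on $N=|\Omega|$, reducing $\C^{2^\Omega}$ to a tensor product of a smaller power set with the two-dimensional module attached to a single point, and then to apply the Clebsch--Gordan rule for $\U_{\Z/2\Z}$ (Theorem \ref{thm:CG_UZ/2Z}) to propagate the decomposition. For $0\le i\le\floor{N/2}$ abbreviate the multiplicity appearing in \eqref{2^Omega_dec} by $m_{N,i}=\frac{N-2i+1}{N-i+1}\binom{N}{i}$. The first thing I would record is the closed-form identity $m_{N,i}=\binom{N}{i}-\binom{N}{i-1}$, obtained at once from $\binom{N}{i-1}=\frac{i}{N-i+1}\binom{N}{i}$. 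The point of this reformulation is that Pascal's rule then yields the recursion
$$
m_{N,i}=m_{N-1,i}+m_{N-1,i-1},
$$
which is exactly the combination the induction will produce.

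For the base case $N=0$ we have $\C^{2^\emptyset}=\C\,\emptyset$, on which $\rho$ fixes $\emptyset$ by \eqref{rho:2^Omega}, so $\C^{2^\emptyset}\cong L_0^+$, matching \eqref{2^Omega_dec}. For the inductive step I would fix a point $\omega\in\Omega$ and first identify the one-point module: by \eqref{H:2^Omega} the basis $\emptyset,\{\omega\}$ of $\C^{2^{\{\omega\}}}$ has $H$-weights $+1,-1$, while \eqref{rho:2^Omega} shows $\rho$ interchanges these two vectors; comparing with \eqref{L+&L-:rho} gives $\C^{2^{\{\omega\}}}\cong L_1^+$. Applying Theorem \ref{thm:UZ/2Z_Go&diagram}(ii) with $x_0=\{\omega\}$, the map $\iota(\{\omega\})$ is then a $\U_{\Z/2\Z}$-module isomorphism
$$
\C^{2^\Omega}\cong\C^{2^{\Omega\setminus\{\omega\}}}\otimes L_1^+,
$$
so the induction hypothesis for the $(N-1)$-element set $\Omega\setminus\{\omega\}$ gives $\C^{2^\Omega}\cong\bigoplus_i m_{N-1,i}\cdot\left(L_{N-1-2i}^{(-1)^i}\otimes L_1^+\right)$.

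It remains to expand each factor by Theorem \ref{thm:CG_UZ/2Z} and collect terms. Taking $m=N-1-2i$, $n=1$, $\delta=(-1)^i$, $\e=+1$ there, the rule reads $L_{N-1-2i}^{(-1)^i}\otimes L_1^+\cong L_{N-2i}^{(-1)^i}\oplus L_{N-2-2i}^{(-1)^{i+1}}$, the second summand being absent precisely when $N-1-2i=0$. A fixed irreducible $L_{N-2i}^{(-1)^i}$ then arises from exactly two sources: the $p=0$ part of the summand indexed by $i$, contributing $m_{N-1,i}$ copies with sign $(-1)^i$, and the $p=1$ part of the summand indexed by $i-1$, contributing $m_{N-1,i-1}$ copies with sign $(-1)^{(i-1)+1}=(-1)^i$. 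Since both contributions carry the same sign $(-1)^i$, they merge into a single isotypic component $L_{N-2i}^{(-1)^i}$ of multiplicity $m_{N-1,i}+m_{N-1,i-1}=m_{N,i}$, which is precisely the term in \eqref{2^Omega_dec}; this closes the induction.

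The main obstacle is not any single computation but the bookkeeping of the inductive step: one must check that the two Clebsch--Gordan contributions to each isotypic component genuinely share the sign $(-1)^i$, so that no cancellation or sign ambiguity occurs, and that their multiplicities assemble into the Catalan-triangle recursion recorded at the outset. The extremal indices and the degenerate case $N-1-2i=0$ are absorbed by the convention $\binom{N}{j}=0$ for $j<0$ or $j>N$, under which both the recursion and the summation range $0\le i\le\floor{N/2}$ remain valid at the boundary.
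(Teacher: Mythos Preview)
Your proof is correct and follows essentially the same approach as the paper: both induct on $|\Omega|$, peel off a single point $x_0$ to write $\C^{2^\Omega}\cong\C^{2^{\Omega\setminus x_0}}\otimes L_1^+$ via Theorem~\ref{thm:UZ/2Z_Go&diagram}(ii), apply the Clebsch--Gordan rule (Theorem~\ref{thm:CG_UZ/2Z}), and verify the multiplicity recursion $m_{N,i}=m_{N-1,i}+m_{N-1,i-1}$. Your upfront identity $m_{N,i}=\binom{N}{i}-\binom{N}{i-1}$ makes the recursion transparent via Pascal's rule, whereas the paper states it directly and treats the even/odd $|\Omega|$ boundary cases separately; but the argument is the same.
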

\begin{proof} 
We prove the decomposition by induction on $|\Omega|$. By Theorem~\ref{thm:UZ/2Z_Go&diagram}(i), the decomposition holds for $|\Omega|=0$ and $|\Omega|=1$, since in these cases $\C^{2^\Omega}$ is isomorphic to $L_0^+$ and $L_1^+$ respectively.

Assume that $|\Omega|\geq 2$. For notational convenience, set $D=|\Omega|$ and define
$$
m_i(k)=\frac{k-2i+1}{k-i+1}{k\choose i}
$$
for any integers $i$ and $k$ with $0\leq i\leq k$.
Let $x_0\in 2^\Omega$ with $|x_0|=1$. By Theorem \ref{thm:UZ/2Z_Go&diagram}(ii) the $\U$-module $\C^{2^\Omega}$ is isomorphic to $\C^{2^{\Omega\setminus x_0}}\otimes \C^{2^{x_0}}$. 
As noted earlier, the $\U$-module $\C^{2^{x_0}}$ is isomorphic to $L_1^+$. 
By the induction hypothesis, the $\U$-module $\C^{2^{\Omega\setminus x_0}}$ is isomorphic to 
$$
\bigoplus_{i=0}^{\floor{\frac {D-1}{2}}} 
m_i(D-1)
\cdot  
L_{D-2i-1}^{(-1)^i}.
$$
The distributive law of $\otimes$ over $\oplus$ yields the following decomposition of $\C^{2^{\Omega\setminus x_0}}\otimes \C^{2^{x_0}}$: 
\begin{gather}
\label{2^Omega_dec'}
\bigoplus_{i=0}^{\floor{\frac {D-1}2}} 
m_i(D-1)
\cdot  
(L_{D-2i-1}^{(-1)^i}\otimes L_1^+).
\end{gather}
Applying Theorem \ref{thm:CG_UZ/2Z} the $\U$-module $L_{D-2i-1}^{(-1)^i}\otimes L_1^+$ is isomorphic to 
\begin{gather*}
\left\{
\begin{array}{ll}
L_{D-2i}^{(-1)^i} \oplus L_{D-2i-2}^{(-1)^{i+1}}
\qquad &\hbox{if $i\leq \frac{D}{2}-1$},
\\
L_1^{(-1)^i}
\qquad &\hbox{otherwise}
\end{array}
\right.
\qquad 
\left(\textstyle{0\leq i\leq \floor{\frac {D-1}{2}}}\right).
\end{gather*}
If $D$ is even then the $\U_{\Z/2\Z}$-module (\ref{2^Omega_dec'}) is isomorphic to 
\begin{gather}
\label{evenD}
m_0(D-1)\cdot L_D^+
\oplus 
m_{\frac{D}{2}-1}(D-1)\cdot L_0^{(-1)^{\frac{D}{2}}}
\oplus 
 \bigoplus_{i=1}^{\frac{D}{2}-1}
(m_{i-1}(D-1)+m_i(D-1))\cdot L_{D-2i}^{(-1)^i}.
\end{gather}
If $D$ is odd then the $\U_{\Z/2\Z}$-module (\ref{2^Omega_dec'}) is isomorphic to 
\begin{gather}
\label{oddD}
m_0(D-1)\cdot L_D^+\oplus \bigoplus_{i=1}^{\frac{D-1}{2}}
(m_{i-1}(D-1)+m_i(D-1))\cdot L_{D-2i}^{(-1)^i}.
\end{gather}
Observe that $m_0(D)=m_0(D-1)=1$ and 
$$
m_{i-1}(D-1)+m_i(D-1)=m_i(D)
\qquad 
(1\leq i\leq D-1).
$$
Note that if $D$ is even and $i=\frac{D}{2}$, then $m_i(D-1)=0$. Substituting these equalities into \eqref{evenD} and \eqref{oddD} yields the decomposition \eqref{2^Omega_dec}. The result follows.
\end{proof}

\subsection*{Funding statement}
The research was supported by the National Science and Technology Council of Taiwan under the project NSTC 114-2115-M-008-011.

\subsection*{Conflict of interest statement}
The authors declare that they have no conflicts of interest.

\subsection*{Data availability statement}
No datasets were generated or analyzed during the current study.

\bibliographystyle{amsplain}
\bibliography{MP}

\end{document}